\definecolor{labelkey}{rgb}{0.6,0,1}
\newcounter{corr}
\definecolor{violet}{rgb}{0.580,0.,0.827}
\newcommand{\corr}[3]{\typeout{Warning : a correction remains in page
\thepage}
				\stepcounter{corr}        
				{\color{blue}\ifmmode\text{\,\sout{\ensuremath{#1}}\,}\else\sout{#1}\fi}
       {\color{red}#2}
       {\color{violet} #3}}
\newcommand{\mathbi}[1]{{\boldsymbol #1}}
\newcommand{\norm}[2]{\| #1 \|_{#2}}
\tikzset{ shorten <>/.style={ shorten >=#1, shorten <=#1 } }
\newcommand{\logLogSlopeTriangle}[5]
{
    \pgfplotsextra
    {
        \pgfkeysgetvalue{/pgfplots/xmin}{\xmin}
        \pgfkeysgetvalue{/pgfplots/xmax}{\xmax}
        \pgfkeysgetvalue{/pgfplots/ymin}{\ymin}
        \pgfkeysgetvalue{/pgfplots/ymax}{\ymax}

        \pgfmathsetmacro{\xArel}{#1}
        \pgfmathsetmacro{\yArel}{#3}
        \pgfmathsetmacro{\xBrel}{#1-#2}
        \pgfmathsetmacro{\yBrel}{\yArel}
        \pgfmathsetmacro{\xCrel}{\xArel}

        \pgfmathsetmacro{\lnxB}{\xmin*(1-(#1-#2))+\xmax*(#1-#2)} 
        \pgfmathsetmacro{\lnxA}{\xmin*(1-#1)+\xmax*#1} 
        \pgfmathsetmacro{\lnyA}{\ymin*(1-#3)+\ymax*#3} 
        \pgfmathsetmacro{\lnyC}{\lnyA+#4*(\lnxA-\lnxB)}
        \pgfmathsetmacro{\yCrel}{\lnyC-\ymin)/(\ymax-\ymin)}

        \coordinate (A) at (rel axis cs:\xArel,\yArel);
        \coordinate (B) at (rel axis cs:\xBrel,\yBrel);
        \coordinate (C) at (rel axis cs:\xCrel,\yCrel);

        \draw[#5]   (A)-- node[pos=0.5,anchor=north] {\scriptsize{1}}
                    (B)-- 
                    (C)-- node[pos=0.,anchor=west] {\scriptsize{#4}} 
                    cycle;
    }
}
\newtheorem{theorem}{Theorem}[section]
\newtheorem{definition}{Definition}[section]
\newtheorem{proposition}[theorem]{Proposition}
\newtheorem{lemma}[theorem]{Lemma}
\newtheorem{remark}[theorem]{Remark}
\newenvironment{proof}{\noindent {\sc Proof.} }{$\square$ }
\begin{document}

\title{Non-conforming finite elements on polytopal meshes}
\author{J\'er\^ome Droniou\thanks{School of Mathematics, Monash University, Melbourne, Australia, \tt{jerome.droniou@monash.edu}}, Robert Eymard\thanks{LAMA, Universit\'e Gustave Eiffel, UPEM, F-77447 Marne-la-Vall\'ee, France, \tt{robert.eymard@u-pem.fr}}, Thierry Gallou\"et and Rapha\`ele Herbin\thanks{Institut de Math\'ematiques de Marseille, Aix-Marseille Universit\'e, Marseille, France, \tt{thierry.gallouet@univ-amu.fr, raphaele.herbin@univ-amu.fr}}}

\maketitle

\abstract{
In this work we present a generic framework for non-conforming finite elements on polytopal meshes, characterised by elements that can be generic polygons/polyhedra. 
We first present the functional framework on the example of a linear elliptic problem representing a single-phase flow in porous medium. 
This framework gathers a wide variety of possible non-conforming methods, and an error estimate is provided for this simple model. 
We then turn to the application of the functional framework to the case of a steady degenerate elliptic equation, for which a mass-lumping technique is required; here, this technique simply consists in using a different --piecewise constant-- function reconstruction from the chosen degrees of freedom. 
A convergence result is stated for this degenerate model. 
Then, we introduce a novel specific non-conforming method, dubbed Locally Enriched Polytopal Non-Conforming  (LEPNC). 
These basis functions comprise functions dedicated to each face of the mesh (and associated with average values on these faces), together with functions spanning the local $\mathbb{P}^1$ space in each polytopal element. 
The analysis of the interpolation properties of these basis functions is provided, and mass-lumping techniques are presented. 
Numerical tests are presented to assess the efficiency and the accuracy of this method on various examples. 
Finally, we show that generic polytopal non-conforming methods, including the LEPNC, can be plugged into the gradient discretization method framework, which makes them amenable to all the error estimates and convergence results that were established in this framework for a variety of models.}

\section{Introduction}

Problems involving elliptic partial differential equations are often efficiently approximated by the Lagrange finite element method, yielding an approximation of the unknown functions at the nodes of the mesh. 
In some cases, it may however be more interesting to approximate the unknown functions at the centre of the faces of the mesh. 
This is for example the case for the Stokes and Navier-Stokes problems, where an approximation of the velocity of a fluid at the faces of the mesh leads to an easy way to take into account the conservation of fluid mass in each element. 
This property is the basis of the success of the Crouzeix-Raviart approximation for the incompressible Stokes and Navier-Stokes equations; see the seminal paper by Crouzeix and Raviart \cite{crouzeix-raviart-73}, and recent extensions including linear elasticity \cite{DPL15}.

Another situation for which approximating functions at the face centre is highly relevant is found in underground flows in heterogeneous porous media. 
Several coupled models require to simultaneously solve an elliptic equation associated with the pressure of the fluid, and equations associated with the transport of species by different mechanisms including convection with the displacement of the fluid, diffusion/dispersion mechanisms, and chemical and thermodynamic reactions. 
In such cases, the accuracy of the model on relatively coarse meshes can only be obtained if the elements of the mesh are homogeneous, in order to compute the flows in the high permeability zones as precisely as possible, without integrating in these zones some porous volume belonging to low permeability zones. 
Non-conforming methods with unknowns at the face naturally lead to finite volume properties on the elements, which are useful for the discretisation of such coupled equations. 
Note that non-conforming methods are in some way strongly linked with mixed finite elements on the same mesh, in the sense that the matrix resulting from the mixed hybrid condensed formulation for the Raviart-Thomas finite element is the same as the non conforming P1 finite element \cite{chen1996equi,vohralik2007mixed}.

The aim of this paper is twofold.

On one hand, we wish to provide a general framework for the functional basis of non-conforming methods on polytopal meshes. 
Polytopal meshes have elements that can be generic polygons or polyhedra; they have gained considerable interest because they allow to mesh complex geometries or match specific underground features. 
For example, in the framework of petroleum engineering, general hexahedra have been used for several years; numerical developments for the computation of porous flows on such grids may be found in \cite{aav2002intro}, for multi-point flux approximation finite volume methods for instance, in \cite{wheeler2006multi} for multi-point mixed approximations, or in \cite{mfdrev} for mimetic finite difference methods. 
The use of polytopal meshes for underground flows has motivated so many papers that it is impossible to give an exhaustive list; we refer the reader to the introduction of \cite{hho-book} for a thorough literature review on the topic.

Let us focus on the non-conforming finite element method for second order differential forms, described on simplicial meshes for example in \cite{ciarlet,zien2014fin}. 
By non-conforming finite element method we refer to a method such that:
\begin{itemize}
\item the restriction to each element of the approximate solution belongs to $H^1$, 
\item the approximate solution can be discontinuous at the common face between two elements everywhere, but some weak (averaged or at a certain point on the face) continuity is imposed,
\item the approximate gradient is defined as the broken gradient, which is locally (i.e. on each cell) the gradient of the function.  
\end{itemize}

The mathematical properties behind the nature of the continuity conditions at the faces, needed for the convergence of the method, are sometimes called the ``patch test''  \cite{stum1979genpatch}.
In Section \ref{sec:nc.approx.lin}, we revisit these properties, plugging all the non-conforming methods into a broken continuous $H^1$ space defined on a general polytopal mesh. 
We thus obtain in Section \ref{sec:nonconf.general}, a general error estimate in the case of a linear elliptic equation in heterogeneous and anisotropic cases. Section \ref{sec:nc.approx.lin} can be read as a simple introduction, using a basic linear model as illustration, to generic non-conforming finite-element methods on polytopal meshes.

In Section \ref{sec:nc.ml}, we explore the use of these methods on a more challenging model, which is however very relevant to applications in geosciences: a nonlinear degenerate elliptic equation of the Stefan or porous medium equation type.
We introduce in Section \ref{sec:ml.generic} a mass lumping technique, which is mandatory for designing robust numerical schemes for this model. 

We then focus, in Section \ref{sec:bpncfe}, on a new specific non-conforming approximation on general polytopal meshes, called the Locally Enriched Polytopal Non-Conforming (LEPNC) method. 
This method is based on the $H^1$ piecewise approximation, imposing the continuity of the mean value on the interfaces. 
The advantage of the method presented here is its robustness, which is not the case for other possible simpler methods, such as choosing on each cell polynomials of degree $k$ with dim $\mathbb{P}^k(\mathbb R^d)$ larger than or equal to the number of faces of the polytopal cell (this condition is necessary to obtain a decent approximation, see e.g.\ the hexagonal example of Section \ref{sec:bpncfe}, but it is not sufficient to solve robustness issues, see Remark \ref{rem:need.bubble}).
In particular, the LEPNC method allows for hanging nodes which frequently occur when meshing two different zones such as in domain decomposition methods. 
Another important feature of the finite element method presented here is that it can be used together with $\mathbb{P}^1$ nonconforming finite elements on simplicial parts of the mesh.
The LEPNC basis functions are described in Sections \ref{sec:const.ncPoly}--\ref{NCpoly:sec:spaceV}, and the approximation properties of the method are detailed in Section \ref{sec:appprobpnc}. The convergence theorems for the LEPNC method are given in Section \ref{sec:cvgcelepnc}.
 Various numerical tests are then proposed in Section \ref{sec:numer}, showing the accuracy and the efficiency of this method on problems presenting some complex features.

Section \ref{sec:prop.NC} covers the generic analysis of the convergence of non-conforming methods, which is encompassed in the framework of the Gradient Discretization method \cite{gdm}. 
Some perspectives are then drawn in Section \ref{sec:conclusion}.

\section{Principles of polytopal non-conforming approximations}\label{sec:nc.approx.lin}

\subsection{The model: linear single-phase incompressible flows in po\-rous media}

The principles of a generic polytopal non-conforming method are first presented on the following linear model of pressure for a single-phase incompressible flow in a porous medium:

\begin{equation}\label{eq:linear}
\left\{\begin{array}{ll}
-\mathop{\rm div}(\Lambda\nabla \bar u) = f+\mathop{\rm div}(\bm{F})&\quad\mbox{ in }\Omega\,,\\
\bar u=0&\quad\mbox{ on }\partial\Omega,
\end{array}\right.
\end{equation}
with the following assumptions on the data:
\begin{subequations}
\begin{align}
  \bullet~ & \Omega \mbox{ is a polytopal open subset of $\mathbb{R}^d$ ($d\in\mathbb{N}^\star$)}, \label{hypomega} \\
\bullet~ & \Lambda \hbox{ is a measurable function from } \Omega  \hbox{ to the set of $d\times d$  }\nonumber\\
&\mbox{symmetric matrices and there exists $\underline{\lambda},\overline{\lambda}>0$ such that,}\nonumber\\
&\mbox{for a.e.\ $\mathbi{x} \in\Omega$, $\Lambda(\mathbi{x})$ has eigenvalues in $[\underline{\lambda},\overline{\lambda}]$,}
\label{hyplambda}\\
\bullet~  & f \in L^2(\Omega)\,,\;\bm{F}\in L^2(\Omega)^d. \label{hypfg}
\end{align}
\label{hypglin}
\end{subequations}
We note in passing that a polytopal open set is simply a bounded polygon (if $d=2$) or polyhedron (if $d=3$) without slit, that is, it lies everywhere on one side of its boundary; see \cite[Section 7.1.1]{gdm} for a more formal definition.

The solution to \eqref{eq:linear} is to be understood in the standard weak sense:
\begin{equation}\begin{array}{l}
\mbox{Find $\bar u \in H^1_0(\Omega)$ such that, }\forall v \in H^1_0(\Omega),\\
\displaystyle\int_\Omega \Lambda\nabla\bar u\cdot\nabla v d\mathbi{x}  
= \int_\Omega fv d\mathbi{x} 
-\int_\Omega \bm{F}\cdot\nabla v d\mathbi{x}.
\end{array}\label{ellgenf}\end{equation}

\subsection{Polytopal non-conforming method}\label{sec:nonconf.general}

A polytopal non-conforming scheme for \eqref{ellgenf} is obtained by replacing the continuous space $H^1_0(\Omega)$ in this weak formulation by a finite-dimensional subspace of a ``non-conforming Sobolev space''. 
Let us first give the definition of polytopal mesh we will be working with; this definition is a simplified version of \cite[Definition 7.2]{gdm}.

\begin{definition}[Polytopal mesh]\label{def:polymesh}~
Let $\Omega$ satisfy Assumption \eqref{hypomega}. 
A polytopal mesh of $\Omega$ is a triplet $\mathfrak{T} = (\mathcal{M},\mathcal F,\mathcal{P})$, where:
\begin{enumerate}
\item \label{item:mesh} $\mathcal{M}$ is a finite family of non empty connected polytopal open disjoint subsets of $\Omega$ (the ``cells'') such that $\overline{\Omega}= \displaystyle{\cup_{K \in \mathcal{M}} \overline{K}}$.
For any $K\in\mathcal{M}$, $\partial K  = \overline{K}\setminus K$ is the boundary of $K$, $|K|>0$ is the measure of $K$ and $h_K$ denotes the diameter of $K$, that is the maximum distance between two points of $\overline{K}$.

\item $\mathcal F=\mathcal F_{\rm int}\cup\mathcal F_{\rm ext}$ is a finite family of disjoint subsets of $\overline{\Omega}$ (the ``faces'' of the mesh -- ``edges'' in 2D), such that any $\sigma\in\mathcal F_{\rm int}$ is contained in $\Omega$ and any $\sigma\in\mathcal F_{\rm ext}$ is contained in $\partial\Omega$.
Each $\sigma\in\mathcal F$ is assumed to be a non empty open subset of a hyperplane of $\mathbb{R}^d$, with a strictly positive $(d-1)$-dimensional measure $|\sigma|$, and a relative interior $\overline{\sigma}\backslash\sigma$ of zero $(d-1)$-dimensional measure. 
We denote by $\overline{\mathbi{x}}_\sigma$ the centre of mass of $\sigma$.
Furthermore, for  all $K \in \mathcal{M}$, there exists  a subset $\mathcal F_K$ of $\mathcal F$ such that $\partial K  = \displaystyle{\cup_{\sigma \in \mathcal F_K}} \overline{\sigma} $. 
We set $\mathcal{M}_\sigma = \{K\in\mathcal{M}\,:\, \sigma\in\mathcal F_K\}$ and assume that, for all $\sigma\in\mathcal F$, either  $\mathcal{M}_\sigma$ has exactly one element and then $\sigma\in \mathcal F_{\rm ext}$, or $\mathcal{M}_\sigma$ has exactly two elements and then $\sigma\in \mathcal F_{\rm int}$.
For $K \in \mathcal{M}$ and $\sigma \in \mathcal F_K$, $\mathbi{n}_{K,\sigma}$ is the (constant) unit vector normal to $\sigma$ outward to $K$.

\item $\mathcal{P}=(\mathbi{x}_K)_{K \in \mathcal{M}}$ is a family of points of $\Omega$ such that $\mathbi{x}_K\in K$ for all  $K\in\mathcal{M}$. 
We denote by $d_{K,\sigma}$ the signed orthogonal distance between $\mathbi{x}_K$ and $\sigma \in \mathcal F_K$ (see Fig. \ref{fig.dksigma}), that is: 
\begin{equation}
 d_{K,\sigma} = (\mathbi{x}  - \mathbi{x}_K) \cdot \mathbi{n}_{K,\sigma},\mbox{ for all } \mathbi{x}  \in \sigma. \label{def.dcvedge}
\end{equation}
(Note that $(\mathbi{x}  - \mathbi{x}_K) \cdot \mathbi{n}_{K,\sigma}$ is constant for $\mathbi{x}  \in \sigma$.)
We then assume that each cell $K\in\mathcal{M}$ is strictly star-shaped with respect to $\mathbi{x}_K$, that is $d_{K,\sigma}> 0$ for all $\sigma\in\mathcal F_K$. 
This implies that for all $\mathbi{x} \in K$, the line segment $[\mathbi{x}_K,\mathbi{x} ]$ is included in $K$.

For all $K\in\mathcal{M}$ and $\sigma\in\mathcal F_K$, we denote by $D_{K,\sigma}$ the pyramid with vertex $\mathbi{x}_K$ and basis $\sigma$, that is
\begin{equation}
D_{K,\sigma}=\{ t \mathbi{x}_K +(1-t) \mathbi{y}\,:\, t\in (0,1), \ \mathbi{y}\in \sigma\}.
\label{bldefdks}\end{equation}
We denote, for all $\sigma\in\mathcal F$, $D_\sigma = \bigcup_{K\in\mathcal{M}_\sigma} D_{K,\sigma}$ (this set is called the ``diamond'' associated with the face $\sigma$, and for obvious reasons $D_{K,\sigma}$ is also referred to as an ``half-diamond'').

\end{enumerate}
The size of the polytopal mesh is defined by $h_{\mathcal M} = \sup\{h_K\,:\, K\in \mathcal{M}\}$ and 
the mesh regularity parameter $\gamma_\mathfrak{T}$ is defined by:
\begin{equation}
\gamma_\mathfrak{T} = \max_{K\in\mathcal{M}}
\left(\max_{\sigma\in\mathcal F_K}\frac{h_K}{d_{K,\sigma}}+{\rm Card}(\mathcal F_K)\right)
+
 \max_{\sigma\in\mathcal F_{\rm int}\,,\;\mathcal{M}_\sigma=\{K,L\}} \left(\frac {d_{K,\sigma}} {d_{L,\sigma}}+ \frac {d_{L,\sigma}} {d_{K,\sigma}}\right). \label{def:reg.gamma}
\end{equation}
\end{definition}

\begin{figure}[htb]
\begin{center}
\input{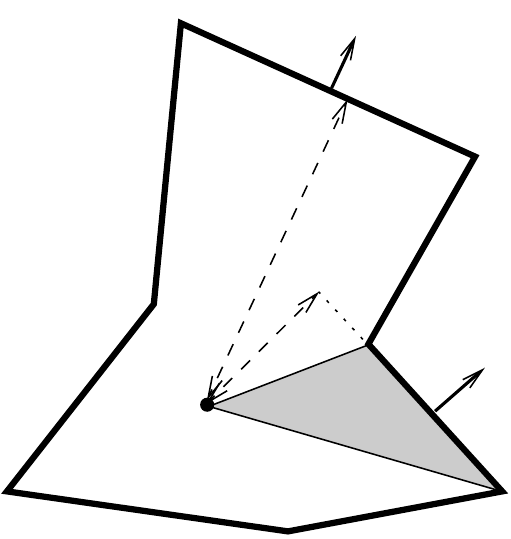_t}
\caption{A cell $K$ of a polytopal mesh}
\label{fig.dksigma}
\end{center}
\end{figure}
We can now define the notion of non-conforming Sobolev space, which is built from the standard broken Sobolev space on a mesh by imposing some weak continuity property between the cells.

\begin{definition}[Non-conforming $H^1_0(\Omega)$ space]\label{chapncfe:def:ncspace}
Let  $\mathfrak{T} = (\mathcal{M},\mathcal F,\mathcal{P})$ be a polytopal mesh of $\Omega$ in the sense of Definition \ref{def:polymesh}. 
The non-conforming $H^1_0(\Omega)$ space on $\mathfrak{T}$, denoted by $H^1_{\mathfrak{T},0}$, is the space of all functions $w\in L^2(\Omega)$ such that:
\begin{enumerate}
 \item{} \emph{[$H^1$-regularity in each cell]} For all $K\in\mathcal{M}$, the restriction $w_{|K}$ of $w$ to $K$ belongs to $H^1(K)$. The trace of $w_{|K}$ on $\sigma\in\mathcal F_K$ is denoted by $w_{|K,\sigma}$.
 \item{} \emph{[Continuity of averages on internal faces]} For all $\sigma\in\mathcal F_{\rm int}$ with $\mathcal{M}_\sigma = \{K,L\}$,
 \begin{equation}
 \int_\sigma w_{|K,\sigma}  = \int_\sigma w_{|L,\sigma}.
 \label{ncfe:eq:sameav}\end{equation}
 \item{} \emph{[Homogeneous Dirichlet BC for averages on external faces]} For all $\sigma\in\mathcal F_{\rm ext}$ with $\mathcal{M}_\sigma = \{K\}$,
 \begin{equation}
 \int_\sigma w_{|K,\sigma} = 0.
 \label{ncfe:eq:avnull}\end{equation}
\end{enumerate}
If $w\in H^1_{\mathfrak{T},0}$, its ``broken gradient'' $\nabla_{\mathcal M} w$ is defined by
\[
\forall K\in\mathcal{M}\,,\;\nabla_{\mathcal M} w = \nabla (w_{|K})\mbox{ in $K$}
\]
and we set $\norm{w}{H^1_{\mathfrak{T},0}} := \norm{\nabla_{\mathcal M} w}{L^2(\Omega)^d}$.
\end{definition}

It can easily be checked that $\norm{{\cdot}}{H^1_{\mathfrak{T},0}}$ is indeed a norm on $H^1_{\mathfrak{T},0}$. 
The continuity \eqref{ncfe:eq:sameav} is a ``0-degree patch test'', and some functions in $H^1_{\mathfrak{T},0}(\Omega)$ are therefore not conforming (they do not belong to $H^1_0(\Omega)$). 
Actually, disregarding the boundary condition \eqref{ncfe:eq:avnull}, the non-conforming Sobolev space strictly lies between the classical Sobolev space $H^1(\Omega)$ and the fully broken Sobolev space $H^1(\mathcal M)=\{v\in L^2(\Omega)\,:\,v_{|K}\in H^1(K)\mbox{ for all $K\in\mathcal M$}\}$.

A polytopal non-conforming approximation of \eqref{ellgenf} is obtained by selecting a finite-dimensional subspace $V_{\mathfrak{T},0}\subset H^1_{\mathfrak{T},0}$, by replacing, in this weak formulation, the infinite-dimensional space $H^1_0(\Omega)$ by $V_{\mathfrak{T},0}$, and by using broken gradients instead of standard gradients:  
\begin{equation}\begin{array}{l}
\mbox{Find $u \in V_{\mathfrak{T},0}$ such that, }\forall v \in V_{\mathfrak{T},0},\\
\displaystyle\int_\Omega \Lambda\nabla_{\mathcal M} u\cdot\nabla_{\mathcal M} v d\mathbi{x}  
= \int_\Omega fv d\mathbi{x} 
-\int_\Omega \bm{F}\cdot\nabla_{\mathcal M} v d\mathbi{x}.
\end{array}\label{eq:nc.lin}\end{equation}
Since $\norm{{\cdot}}{H^1_{\mathfrak{T},0}}$ is a norm on $V_{\mathfrak{T},0}$, the Lax-Milgram theorem immediately gives the existence and uniqueness of the solution to \eqref{eq:nc.lin}.
The following error estimate is a straightforward consequence of the analysis carried out in Section \ref{sec:prop.NC} (see in particular Theorem \ref{th:gdm.error} and Proposition \ref{prop:est.NC.GDM}).

\begin{theorem}[Error estimates for polytopal non-conforming methods]\label{th:error.est}
We assume that the solution $\bar u$ of \eqref{ellgenf} and the data $\Lambda$ and $\bm{F}$ in Hypotheses \eqref{hypglin} are such that $\Lambda\nabla\bar u+\bm{F} \in H^1(\Omega)^d$.
Let $V_{\mathfrak{T},0}$ be a finite-dimensional subspace of $H^1_{\mathfrak{T},0}$ and let $u$ be the solution of the non-conforming scheme \eqref{eq:nc.lin}. 
Then, there exists $C>0$ depending only on $\Omega$, $\underline{\lambda},\overline{\lambda}$ in \eqref{hyplambda} and increasingly depending on $\gamma_\mathfrak{T}$ such that
\begin{equation}\label{eq:errorest.lin}
\norm{\bar u-u}{L^2(\Omega)}+\norm{\nabla \bar u-\nabla_{\mathcal M}u}{L^2(\Omega)^d}
\le C h_{\mathcal M} \norm{\Lambda\nabla\bar u+\bm{F}}{H^1(\Omega)^d} + C \min_{v\in V_{\mathfrak{T},0}}
\norm{\bar u-v}{H^1_{\mathfrak T,0}}.
\end{equation}
\end{theorem}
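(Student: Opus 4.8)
The estimate \eqref{eq:errorest.lin} is a classical second Strang-type bound, and the natural strategy is to reduce it to the abstract Gradient Discretization framework of Section \ref{sec:prop.NC}, as the statement itself advertises. The plan is therefore to exhibit the non-conforming scheme \eqref{eq:nc.lin} as a gradient scheme: take the discrete space $X_{\mathfrak T,0}=V_{\mathfrak T,0}$, the function reconstruction as the identity (the element of $V_{\mathfrak T,0}$ itself, viewed in $L^2(\Omega)$), and the gradient reconstruction as the broken gradient $\nabla_{\mathcal M}$. One then checks that this triple forms a gradient discretisation and that \eqref{eq:nc.lin} is exactly the associated gradient scheme for \eqref{ellgenf}; the abstract error estimate (Theorem \ref{th:gdm.error}) then bounds $\norm{\bar u-u}{L^2}+\norm{\nabla\bar u-\nabla_{\mathcal M}u}{L^2}$ by a sum of three quantities: the coercivity-controlled interpolation error $\min_{v}\norm{\bar u-v}{H^1_{\mathfrak T,0}}$, a consistency (interpolation) term, and the limit-conformity defect applied to the flux $\bm\varphi:=\Lambda\nabla\bar u+\bm F$.

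The heart of the matter is the limit-conformity (or ``dual consistency'') term, quantified in Proposition \ref{prop:est.NC.GDM}: one must show that for $\bm\varphi\in H^1(\Omega)^d$,
\[
\sup_{v\in V_{\mathfrak T,0}\setminus\{0\}}\frac{1}{\norm{v}{H^1_{\mathfrak T,0}}}\left|\int_\Omega\bigl(\bm\varphi\cdot\nabla_{\mathcal M}v+v\mathop{\rm div}\bm\varphi\bigr)\,d\mathbi{x}\right|\le C\,h_{\mathcal M}\,\norm{\bm\varphi}{H^1(\Omega)^d}.
\]
Integrating by parts cell-by-cell turns the left integrand into a sum over faces of jump terms $\sum_K\int_{\partial K}v_{|K,\sigma}\,\bm\varphi\cdot\mathbi{n}_{K,\sigma}$. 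Here is where the weak continuity \eqref{ncfe:eq:sameav}--\eqref{ncfe:eq:avnull} enters: on each face $\sigma$ the averages of $v_{|K,\sigma}$ from the two sides agree (or vanish on $\partial\Omega$), so one may subtract from $\bm\varphi\cdot\mathbi{n}_{K,\sigma}$ its mean value $\overline{\bm\varphi\cdot\mathbi{n}}_\sigma$ over $\sigma$ without changing the face integral. This produces $\sum_K\int_\sigma\bigl(v_{|K,\sigma}-\overline v_{K,\sigma}\bigr)\bigl(\bm\varphi\cdot\mathbi{n}_{K,\sigma}-\overline{\bm\varphi\cdot\mathbi{n}}_\sigma\bigr)$ after a second mean-value subtraction on $v$; a Cauchy--Schwarz on each face, a trace/Poincaré--Wirtinger inequality on each half-diamond $D_{K,\sigma}$ (controlled by $\gamma_{\mathfrak T}$ via $h_K/d_{K,\sigma}$ and ${\rm Card}(\mathcal F_K)$), and the $O(h_K)$ approximation of $\bm\varphi$ by its face-average give the factor $h_{\mathcal M}$ and the $H^1$-norm of $\bm\varphi$. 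The consistency term is handled more simply: for any $v\in V_{\mathfrak T,0}$, $\int_\Omega(\Lambda\nabla_{\mathcal M}v\cdot\nabla_{\mathcal M}w - \ldots)$ compared against the continuous form, or more directly the interpolation estimate, is bounded by $\min_v\norm{\bar u-v}{H^1_{\mathfrak T,0}}$ — this is the term that stays in \eqref{eq:errorest.lin} unestimated, since no specific $V_{\mathfrak T,0}$ is assumed.

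The main obstacle is the face-by-face analysis above: one must be careful that the jump cancellation uses \emph{only} equality of averages (not full trace continuity), hence the double subtraction of mean values, and that every constant arising from the trace inequality on a half-diamond and from the mesh geometry is absorbed into an increasing function of $\gamma_{\mathfrak T}$ alone, uniformly in $h_{\mathcal M}$. A secondary, more technical point is verifying the abstract hypotheses of the GD framework (that $\norm{\cdot}{H^1_{\mathfrak T,0}}$ is a norm — already noted after Definition \ref{chapncfe:def:ncspace} — giving coercivity via a discrete Poincaré inequality, again $\gamma_{\mathfrak T}$-dependent). Once these pieces are in place, the three-term abstract estimate collapses to \eqref{eq:errorest.lin} with $C$ depending only on $\Omega$, $\underline\lambda$, $\overline\lambda$ and increasingly on $\gamma_{\mathfrak T}$, as claimed. $\square$
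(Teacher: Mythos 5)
Your proposal follows exactly the route the paper takes: identify \eqref{eq:nc.lin} as the gradient scheme for the gradient discretisation $(V_{\mathfrak T,0},\mathrm{Id},\nabla_{\mathcal M})$ of \eqref{eq:GD.for.NC}, then combine the abstract estimate of Theorem \ref{th:gdm.error} with the bounds on $C_{\mathcal D}$, $S_{\mathcal D}$ and $W_{\mathcal D}$ from Proposition \ref{prop:est.NC.GDM}. Your additional sketch of the limit-conformity bound (cell-wise integration by parts, double mean-value subtraction using \eqref{ncfe:eq:sameav}--\eqref{ncfe:eq:avnull}, trace and Poincar\'e--Wirtinger estimates on half-diamonds) is the correct argument behind \eqref{eq:nc.bound.WD}, which the paper itself simply cites from \cite[Proposition 9.5]{gdm}.
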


\begin{remark}[Role of the terms in \eqref{eq:errorest.lin}]
The term $Ch_{\mathcal M} \norm{\Lambda\nabla\bar u+\bm{F}}{H^1(\Omega)^d}$ in the right-hand side of \eqref{eq:errorest.lin} comes from the non-conformity of the space $V_{\mathfrak{T},0}$, and from the fact that an exact Stokes formula is not satisfied in this space (as measured by $W_{\mathcal D}$ in Section \ref{sec:gdm}). 
The minimum appearing in \eqref{eq:errorest.lin} measures the approximation properties of the space $V_{\mathfrak{T},0}$, as in the second Strang lemma \cite{strang-fix} (see $S_{\mathcal D}$ in Section \ref{sec:gdm}).
\end{remark}

\section{Application to a non-linear model: mass-lumping}\label{sec:nc.ml}

\subsection{Model: stationary Stefan/porous medium equation}\label{sec:stefanmodel}

We now consider the polytopal non-conforming approximation of a more challenging model, which encompasses the stationary versions of both the Stefan model and the porous medium equation:
\begin{equation}\label{eq:stefan.strong}
\left\{\begin{array}{ll}
\bar u-\mathop{\rm div}(\Lambda\nabla \zeta(\bar u)) = f+\mathop{\rm div}(\bm{F})&\quad\mbox{ in }\Omega\,,\\
\zeta(\bar u)=0&\quad\mbox{ on }\partial\Omega.
\end{array}\right.
\end{equation}
We still assume that \eqref{hypglin} holds and, additionally, that
\begin{equation}
\begin{aligned}
& \zeta:\mathbb{R}\to\mathbb{R}\mbox{ is non-decreasing, $\zeta(0)=0$ and}\\
&\exists C_1,C_2>0\mbox{ such that }|\zeta(s)|\ge C_1|s|-C_2\mbox{ for all $s\in\mathbb{R}$}.
\end{aligned}
\label{hyp:zeta}
\end{equation}
The weak form of \eqref{eq:stefan.strong} is
\begin{equation}\begin{array}{l}
\mbox{Find $\bar u \in L^2(\Omega)$ such that $\zeta(\bar u)\in H^1_0(\Omega)$ and, }\forall v \in H^1_0(\Omega),\\
\displaystyle\int_\Omega\left(\bar u v + \Lambda\nabla\zeta(\bar u)\cdot\nabla v\right) d\mathbi{x}  
= \int_\Omega fv d\mathbi{x} 
-\int_\Omega \bm{F}\cdot\nabla v d\mathbi{x}.
\end{array}\label{eq:stefan.weak}\end{equation}

\subsection{Mass-lumping}\label{sec:ml.generic}

As explained in the introduction of \cite{DE19} (see also Appendix B therein), using a standard (conforming or non-conforming) Galerkin approximation for \eqref{eq:stefan.weak} leads to a numerical scheme whose properties are difficult to establish. 
In particular, no convergence result seems attainable if $\bm{F}\not=0$ and, in the case $\bm{F}=0$, only weak convergence can be obtained in general. 
Instead, a modified approximation must be considered that uses a mass-lumping operator for the reaction term. 

Specifically, let $V_{\mathfrak{T},0}$ be a subspace of $H^1_{\mathfrak{T},0}$; we select a basis $(\chi_i)_{i\in I}$ of $V_{\mathfrak{T},0}$ and disjoint subsets $(U_i)_{i\in I}$ of $\Omega$, and we define the mass-lumping operator $\Pi_{\mathfrak{T}}: V_{\mathfrak{T},0}\to L^\infty(\Omega)$ by:
\begin{equation}\label{eq:def.ml.nc}
\forall v=\sum_{i\in I}v_i \chi_i\,,\quad\Pi_{\mathfrak{T}} v = \sum_{i\in I}v_i \mathbf{1}_{U_i},
\end{equation}
where $\mathbf{1}_{U_i}(\mathbi{x})=1$ if $\mathbi{x}\in U_i$ and $\mathbf{1}_{U_i}(\mathbi{x})=0$ otherwise. 
Note that the design of $\Pi_{\mathfrak{T}}$ actually depends on $V_{\mathfrak{T},0}$, and not just on the polytopal mesh $\mathfrak{T}$, but the natural notation $\Pi_{V_{\mathfrak{T},0}}$ has been simplified to $\Pi_{\mathfrak{T}}$ for legibility.

The function $\Pi_{\mathfrak{T}} v$ is piecewise constant and can be considered a good substitute of $v$, provided that each $v_i$ represents some approximate value of $v$ on $U_i$. In this setting, it also makes sense to define $\zeta(v)\in V_{\mathfrak{T},0}$ by applying the non-linear function $\zeta$ component-wise:
\[
\forall v=\sum_{i\in I}v_i \chi_i\,,\quad\zeta(v)=\sum_{i\in I}\zeta(v_i)\chi_i.
\]

\begin{remark}[Mass-lumping of the non-conforming {$\mathbb{P}^1$} method]
Let us illustrate the mass-lumping process on the non-conforming $\mathbb{P}^1$ method on a simplicial mesh. 
A basis of its space is given by $(\chi_{\sigma})_{\sigma\in\mathcal F_{\rm int}}$, where each $\chi_\sigma$ is piecewise linear in each element, with value $1$ at the centre of $\sigma$ and $0$ at the centres of all other faces. 
A mass-lumping operator $\Pi_{\mathfrak{T}}$ for this method is constructed in the following way: for each $v=\sum_{\sigma\in\mathcal F_{\rm int}}v_\sigma\chi_\sigma$,  let $\Pi_{\mathfrak{T}} v$ be the piecewise constant function equal to $v_\sigma$ on each diamond $D_\sigma$, $\sigma\in\mathcal F_{\rm int}$,   (and $\Pi_{\mathfrak{T}} v=0$ on the half-diamonds around boundary faces), see Fig. \ref{fig-mlP1} for an illustration.

\begin{figure}
\begin{center}
\begin{tabular}{c@{\qquad}c}
\input{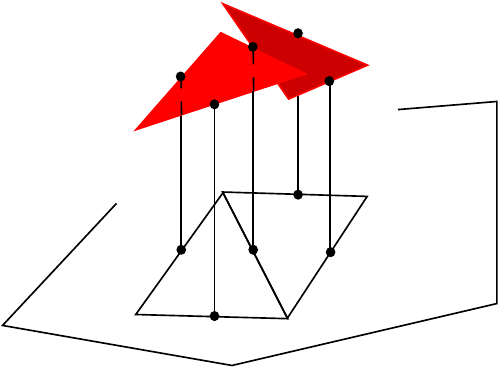_t} & \input{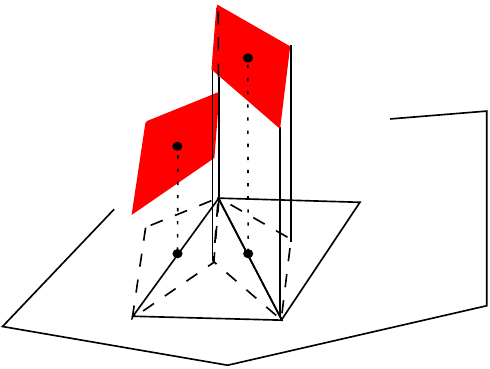_t}
\end{tabular}
\caption{Example of a non-conforming $\mathbb{P}^1$ function (left) and its mass-lumped version (right).}
\label{fig-mlP1}\end{center}
\end{figure}
\end{remark}

A non-conforming approximation of \eqref{eq:stefan.weak} is then obtained replacing $H^1_0(\Omega)$ by $V_{\mathfrak{T},0}$, $\nabla$ with $\nabla_{\mathcal M}$ and using $\Pi_V$ in the reaction and source terms:
\begin{equation}\begin{array}{l}
\mbox{Find $u \in V_{\mathfrak{T},0}$ such that, }\forall v \in V_{\mathfrak{T},0},\\
\displaystyle\int_\Omega\left(\Pi_{\mathfrak{T}} u\,\Pi_{\mathfrak{T}} v + \Lambda\nabla_{\mathcal M}\zeta(u)\cdot\nabla_{\mathcal M} v\right) d\mathbi{x}  
= \int_\Omega f\Pi_{\mathfrak{T}} v d\mathbi{x} 
-\int_\Omega \bm{F}\cdot\nabla_{\mathcal M} v d\mathbi{x}.
\end{array}\label{eq:stefan.nc}\end{equation}

\begin{remark}[Computing the source and reaction terms]\label{rem:shape.ml.regions}
In practice, the right-hand side in \eqref{eq:stefan.nc} is never computed exactly, but through a low order quadrature rule on $f$, assuming that $f$ is approximated by a piecewise constant function on each $U_i$. If $f$ is continuous, for example, one can take
\[
\int_\Omega f\Pi_{\mathfrak{T}}v d\mathbi{x}\approx \sum_{i\in I}|U_i|f(\mathbi{x}_i) v_i
\]
where $\mathbi{x}_i$ is a point selected in or close to $U_i$. The reaction term in \eqref{eq:stefan.nc} is trivial to (exactly) compute:
\[
\int_\Omega \Pi_{\mathfrak{T}}u \Pi_{\mathfrak{T}}vd\mathbi{x} =\sum_{i\in I} |U_i|u_iv_i.
\]
The matrix associated with this term in the scheme is therefore diagonal, as expected. These considerations show that only the measures of $(U_i)_{i\in I}$ are actually needed to implement \eqref{eq:stefan.nc}.
\end{remark}

The following convergence theorem results from the analysis in Section \ref{sec:prop.NC} -- see Theorems \ref{th:gdm.cv.stefan} and \ref{th:prop.generic.nc} together with Lemma \ref{GDM:ml.is.ok}. 
Error estimates could also be stated, but they are more complicated to present and require stronger assumptions on the solution to the Stefan equation; we therefore refer the interested reader to \cite{DE19} for details, in which a partial uniqueness result is also stated for the solution of \eqref{eq:stefan.nc}.
We also mention in passing that error estimates for transient Stefan/porous medium equations are established in \cite{CDGGBP20}; these estimates are stated in the generic framework of the Gradient Discretisation Method, which covers polytopal non-conforming methods. 
\begin{theorem}[Convergence of polytopal non-conforming methods for Stefan]\label{th:cv.stefan}~\\
Let $\gamma>0$ be a fixed number, and let $(\mathfrak{T}_m)_{m\in\mathbb{N}}$ be a sequence of polytopal meshes such that $\gamma_{\mathfrak T_m}\le \gamma$ for all $m\in\mathbb{N}$ and such that $h_{\mathcal M_m}\to 0$ as $m\to\infty$. For each $m\in\mathbb{N}$, take a finite-dimensional subspace $V_{\mathfrak{T}_m,0}$ of $H^1_{\mathfrak{T}_m,0}$ and a mass-lumping operator $\Pi_{\mathfrak T_m}:V_{\mathfrak{T}_m,0}\to L^\infty(\Omega)$ as in \eqref{eq:def.ml.nc}, and assume the following:
\begin{align}\label{eq:consistency.VT}
&\min_{v\in V_{\mathfrak{T}_m,0}}\norm{\phi-v}{H^1_{\mathfrak T,0}}\to 0\mbox{ as $m\to\infty$,} \quad \forall \phi\in H^1_0(\Omega),\\
\label{eq:Pi.comparison}
&\max_{v\in V_{\mathfrak T_m,0}\backslash\{0\}}\frac{\norm{v-\Pi_{\mathfrak T_m}v}{L^2(\Omega)}}{\norm{\nabla_{\mathcal M_m}v}{L^2(\Omega)^d}}\to 0\mbox{ as $m\to\infty$}.
\end{align}
Then, for all $m\in\mathbb{N}$ there exists $u_m\in V_{\mathfrak{T}_m,0}$ solution of \eqref{eq:stefan.nc} and, as $m\to\infty$, $\Pi_{\mathfrak{T}_m}\zeta(u_m)\to \zeta(\bar u)$ strongly in $L^2(\Omega)$, $\nabla_{\mathcal M_m}\zeta(u_m)\to\nabla\zeta(\bar u)$ strongly in $L^2(\Omega)^d$, and $\Pi_{\mathfrak{T}_m}u_m\to\bar u$ weakly in $L^2(\Omega)$, where $\bar u$ is a solution to \eqref{eq:stefan.weak}.
\end{theorem}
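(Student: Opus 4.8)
The strategy is to recognise the schemes \eqref{eq:stefan.nc} as a sequence of gradient schemes and then to invoke the generic convergence result for the Stefan/porous medium model from Section~\ref{sec:prop.NC}. For each $m$, I associate with $(\mathfrak{T}_m,V_{\mathfrak{T}_m,0},\Pi_{\mathfrak{T}_m})$ the gradient discretisation $\mathcal{D}_m=(X_{\mathcal{D}_m,0},\Pi_{\mathcal{D}_m},\nabla_{\mathcal{D}_m})$ whose degrees of freedom are the coefficients in the basis $(\chi_i)_{i\in I_m}$ of $V_{\mathfrak{T}_m,0}$, whose function reconstruction is $\Pi_{\mathcal{D}_m}:=\Pi_{\mathfrak{T}_m}$ and whose gradient reconstruction is $\nabla_{\mathcal{D}_m}:=\nabla_{\mathcal{M}_m}$, both read on $v=\sum_{i\in I_m}v_i\chi_i\in V_{\mathfrak{T}_m,0}$. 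With the component-wise nonlinearity $\zeta_{\mathcal{D}_m}(v)=\sum_{i\in I_m}\zeta(v_i)\chi_i$, the gradient scheme associated with $\mathcal{D}_m$ for \eqref{eq:stefan.weak} is exactly \eqref{eq:stefan.nc}. Since, by \eqref{eq:def.ml.nc}, $\Pi_{\mathcal{D}_m}$ is a piecewise-constant reconstruction built from the disjoint regions $(U_i)_{i\in I_m}$, Lemma~\ref{GDM:ml.is.ok} guarantees this is an admissible mass-lumped gradient discretisation, so Theorem~\ref{th:gdm.cv.stefan} applies once the sequence $(\mathcal{D}_m)_{m\in\mathbb{N}}$ is shown to be coercive, GD-consistent, limit-conforming and compact in the GDM sense.

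These four properties are the object of the generic non-conforming analysis of Section~\ref{sec:prop.NC} (Theorem~\ref{th:prop.generic.nc}); the task is to check that hypotheses \eqref{eq:consistency.VT}--\eqref{eq:Pi.comparison} feed into it. Coercivity follows from a discrete Poincaré inequality on $H^1_{\mathfrak{T}_m,0}$ with constant depending only on $\Omega$ and on the bound $\gamma$ on $\gamma_{\mathfrak{T}_m}$, together with $\norm{\Pi_{\mathfrak{T}_m}v}{L^2(\Omega)}\le\norm{v}{L^2(\Omega)}+\norm{v-\Pi_{\mathfrak{T}_m}v}{L^2(\Omega)}$ and \eqref{eq:Pi.comparison}. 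For GD-consistency, I use that $H^1_0(\Omega)\subset H^1_{\mathfrak{T}_m,0}$: given $\phi\in H^1_0(\Omega)$, pick $v\in V_{\mathfrak{T}_m,0}$ nearly realising the minimum in \eqref{eq:consistency.VT}; then $v-\phi\in H^1_{\mathfrak{T}_m,0}$, the discrete Poincaré inequality gives $\norm{v-\phi}{L^2(\Omega)}\le C\norm{\nabla_{\mathcal{M}_m}v-\nabla\phi}{L^2(\Omega)^d}\to 0$, and \eqref{eq:Pi.comparison} upgrades this to $\norm{\Pi_{\mathfrak{T}_m}v-\phi}{L^2(\Omega)}\to 0$.

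Limit-conformity is obtained from the cell-by-cell Stokes formula: for a smooth vector field $\bm\psi$ (the general case following by density) and $v\in V_{\mathfrak{T}_m,0}$, summing the identity $\int_K(\nabla v\cdot\bm\psi + v\mathop{\rm div}\bm\psi)d\mathbi{x}=\sum_{\sigma\in\mathcal{F}_K}\int_\sigma v_{|K,\sigma}\,\bm\psi\cdot\mathbi{n}_{K,\sigma}$ over $K\in\mathcal{M}_m$ leaves only face terms; subtracting on each face the mean value of $\bm\psi\cdot\mathbi{n}$ (legitimate thanks to \eqref{ncfe:eq:sameav}--\eqref{ncfe:eq:avnull}), these terms are bounded by $Ch_{\mathcal{M}_m}\norm{\bm\psi}{H^1(\Omega)^d}\norm{\nabla_{\mathcal{M}_m}v}{L^2(\Omega)^d}$, while replacing $v$ by $\Pi_{\mathfrak{T}_m}v$ in the volume term costs at most $\norm{\mathop{\rm div}\bm\psi}{L^2(\Omega)}\norm{v-\Pi_{\mathfrak{T}_m}v}{L^2(\Omega)}$, controlled by \eqref{eq:Pi.comparison}; both go to $0$ uniformly. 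Compactness follows from a discrete Rellich theorem for $H^1_{\mathfrak{T}_m,0}$ (a Kolmogorov translate estimate $\norm{w(\cdot+\bm\xi)-w}{L^2(\mathbb{R}^d)}\le C(|\bm\xi|+h_{\mathcal{M}_m})\norm{\nabla_{\mathcal{M}_m}w}{L^2(\Omega)^d}$ for $w\in H^1_{\mathfrak{T}_m,0}$ extended by $0$), again transferred to $\Pi_{\mathfrak{T}_m}v$ via \eqref{eq:Pi.comparison}.

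Once these properties are established, Theorem~\ref{th:gdm.cv.stefan} (with the mass-lumped structure provided by Lemma~\ref{GDM:ml.is.ok}) yields, for each $m$, a solution $u_m$ of the gradient scheme, i.e.\ of \eqref{eq:stefan.nc}, together with $\Pi_{\mathcal{D}_m}\zeta_{\mathcal{D}_m}(u_m)\to\zeta(\bar u)$ strongly in $L^2(\Omega)$, $\nabla_{\mathcal{D}_m}\zeta_{\mathcal{D}_m}(u_m)\to\nabla\zeta(\bar u)$ strongly in $L^2(\Omega)^d$ and $\Pi_{\mathcal{D}_m}u_m\to\bar u$ weakly in $L^2(\Omega)$, for some solution $\bar u$ of \eqref{eq:stefan.weak}. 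Since $\Pi_{\mathcal{D}_m}=\Pi_{\mathfrak{T}_m}$, $\nabla_{\mathcal{D}_m}=\nabla_{\mathcal{M}_m}$ and $\zeta_{\mathcal{D}_m}(u_m)=\zeta(u_m)$, this is the claimed statement. I expect the main obstacle to be the limit-conformity and compactness estimates on genuinely polytopal cells, combined with the careful bookkeeping of the ``mass-lumping gap'' $\norm{v-\Pi_{\mathfrak{T}_m}v}{L^2(\Omega)}$: it is precisely hypothesis \eqref{eq:Pi.comparison} that keeps this gap from contaminating coercivity, consistency, limit-conformity and compactness, which is where the design of $\Pi_{\mathfrak{T}_m}$, and not only that of $V_{\mathfrak{T}_m,0}$, enters the analysis.
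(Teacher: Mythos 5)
Your proposal is correct and follows essentially the same route as the paper: the paper's proof consists precisely of casting \eqref{eq:stefan.nc} as the gradient scheme for the GD $(V_{\mathfrak T_m,0},\Pi_{\mathfrak T_m},\nabla_{\mathcal M_m})$, invoking Theorem \ref{th:prop.generic.nc} (whose hypotheses are \eqref{eq:consistency.VT} and the bound on $\gamma_{\mathfrak T_m}$) for coercivity, consistency, limit-conformity and compactness of the identity-reconstruction GD, transferring these to the mass-lumped GD via Lemma \ref{GDM:ml.is.ok} using \eqref{eq:Pi.comparison}, and concluding with Theorem \ref{th:gdm.cv.stefan}. Your additional sketches of the discrete Poincaré, Stokes-formula and Kolmogorov-translate arguments simply unfold the content of the cited results and do not change the structure of the argument.
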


\section{A locally enriched polytopal non-conforming finite element scheme}\label{sec:bpncfe}

We describe here a non-conforming method that can be applied to almost any polytopal mesh as per Definition \ref{def:polymesh}. Actually, the only additional assumption we make on the mesh is the following:
\begin{equation} \label{conv:faces}
\forall \sigma\in\mathcal F\,,\;\mbox{$\sigma$ is convex}.
\end{equation}
This convexity assumption on the face is rather weak, and the cells themselves can be non-convex -- which is often the case in 3D. 

\medskip

Let us first describe the underlying idea. 
To ensure the consistency of the method, a basic requirement would be for the local spaces (restriction of $V_{\mathfrak{T},0}$ to a cell $K\in\mathcal M$) to contain $\mathbb{P}^1(K)$. 
Denoting by $\mathbb{P}^1(\mathcal M)$ the space of piecewise linear functions on the mesh, without continuity conditions, this means that we should have $\mathbb{P}^1(\mathcal M)\cap H^1_{\mathfrak{T},0}\subset V_{\mathfrak{T},0}$. 
This suggests to take $\mathbb{P}^1(\mathcal M)\cap H^1_{\mathfrak{T},0}$ as our non-conforming finite-dimensional space. 
However, if the number of faces of most of the elements is greater than $d+1$, the constraints of continuity at the faces will impede a correct interpolation.
For instance, on a domain $\Omega$ that can be meshed by uniform hexagons (see Fig. \ref{fig:polygons}), the space $\mathbb{P}^1(\mathcal M)\cap H^1_{\mathfrak{T},0}$ is reduced to $\{0\}$. 
Indeed, the three boundary conditions on the exterior edges of element 1 imply that the constant gradient vanishes in element 1. 
Therefore the mean values at the three interior edges of element 1 also vanish, so that the same reasoning holds in element 2. 
By induction, the gradient vanishes in all the elements of the mesh.
\begin{figure}[htb]
\begin{center}
	\begin{tikzpicture}[hexa/.style= {shape=regular polygon,regular polygon sides=6,minimum size=0.5cm, draw,inner sep=0,anchor=south,rotate=30}]
\foreach \j in {0,...,5}{%
\pgfmathsetmacro\end{5+\j} 
  \foreach \i in {0,...,\end}{%
  \node[hexa] (h\i;\j) at ({(\i-\j/2)*sin(60)*.5},{\j*0.75*.5}) {};}  }      
\foreach \j in {0,...,4}{%
  \pgfmathsetmacro\end{9-\j} 
  \foreach \i in {0,...,\end}{%
  \pgfmathtruncatemacro\k{\j+6}  
  \node[hexa] (h\i;\k) at ({(\i+\j/2-2)*sin(60)*.5},{(4.5+\j*0.75)*.5}) {};}  } 

  \foreach \k in {0,...,10}  {\pgfmathtruncatemacro{\kn}{\k+12}\node [circle,minimum size=0.5cm] at (h1;\k) {\kn};} 
   \foreach \k in {0,...,10}  {\pgfmathtruncatemacro{\kn}{\k+1}\node [circle,minimum size=0.5cm] at (h0;\k) {\kn};}   
\end{tikzpicture}
\caption{Hexagonal mesh\label{fig:polygons}}
\end{center}
\end{figure}
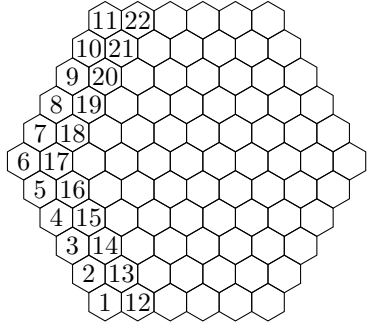

We therefore enrich this initial space with functions associated with the faces, that we use to ensure the proper continuity conditions by ``localising'' the basis of $\mathbb{P}^1$ inside each element. 
The resulting global basis is made of functions associated with the faces and of additional local functions on the cell.
 As a consequence, we call the corresponding method the Locally Enriched Polytopal Non-Conforming finite element method (LEPNC for short).

\begin{remark}[Link with the non conforming {$\mathbb{P}^1$} finite element method]~\\
Note that, when applied to a triangular mesh in 2D, the LEPNC yields 6 degrees of freedom on each triangle, while the classical non conforming $\mathbb{P}^1$ finite element (NCP1FE) method  has only 3.
However, when performing static condensation (see Remark \ref{rem:statcond}) on the LEPNC scheme on triangles, only the 3 degrees of freedom pertaining to the faces remain, so that the computational cost is close to that of the NCP1FE scheme.
In fact, the precision of the methods are close. 
Morever, in the case of an elliptic equation with non homogeneous Dirichlet boundary conditions and a zero right hand side, the approximate solutions given by the NCP1FE and the condensed LEPNC schemes are identical. 
\end{remark}

\subsection{Local space}\label{sec:const.ncPoly}

We first describe the local spaces and shape functions. 
Let $K\in\mathcal M$, for $\sigma\in\mathcal F_K$, the pyramid $D_{K,\sigma}$ has $\sigma$ as one of its faces, as well as faces $\tau$ that are internal to $K$, and gathered in the set $\mathcal F_{K\sigma,{\rm int}}$; see Fig. \ref{fig:pyramid} for an illustration. 

\begin{figure}
\centerline{
\input{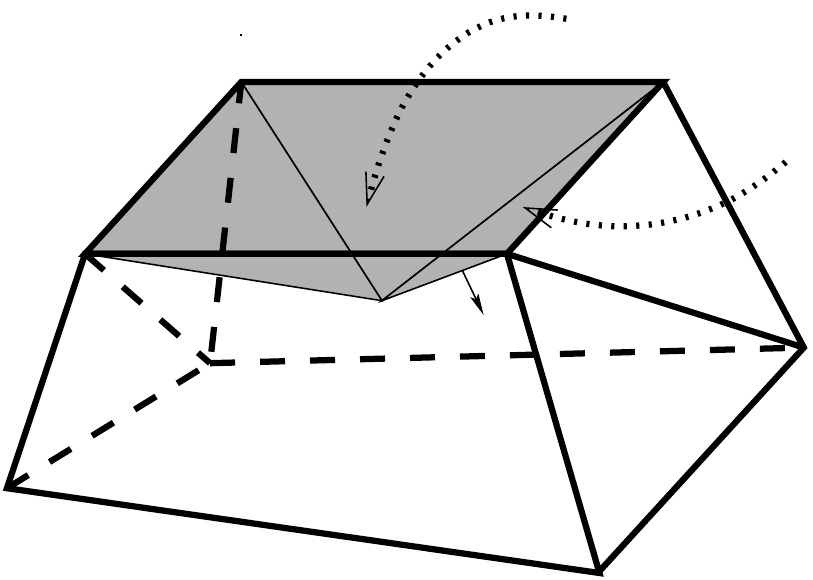_t}
}
\caption{Notations for the design of the local polytopal non-conforming space of Section \ref{sec:const.ncPoly}}
\label{fig:pyramid}
\end{figure}

Let $\phi_{K,\sigma}:K\to\mathbb{R}$ be the piecewise-polynomial function such that, inside $D_{K,\sigma}$, $\phi_{K,\sigma}$ is the product of the distances to each internal face $\tau\in\mathcal F_{K\sigma,{\rm int}}$, and outside $D_{K,\sigma}$ we set $\phi_{K,\sigma}=0$. 
Additionally, $\phi_{K,\sigma}$ is scaled in order to have an average equal to one on $\sigma$. 
The function $\phi_{K,\sigma}$ vanishes on all the faces of $D_{K,\sigma}$ except $\sigma$. 
Under the convexity assumption \eqref{conv:faces} and letting $\mathbf{n}_{K\sigma,\tau}$ be the outer unit normal to $D_{K,\sigma}$ on $\tau\in\mathcal F_{K\sigma,{\rm int}}$, we therefore set
\begin{equation}\label{NCpoly:def.phiKsigma}
\phi_{K,\sigma}(\mathbi{x})=c_{K,\sigma}\prod_{\tau\in\mathcal F_{K\sigma,{\rm int}}} \left[(\mathbi{x}_K-\mathbi{x})\cdot\mathbf{n}_{K\sigma,\tau} \right]^+\quad\forall\mathbi{x}\in K,
\end{equation}
where $s^+=\max(s,0)$ is the positive part of $s\in\mathbb{R}$. 
As previously mentioned, $c_{K,\sigma}>0$ is chosen to ensure that $\phi_{K,\sigma}$ has an average of one on $\sigma$; since this function vanishes outside $D_{K,\sigma}$, this means that we have
\begin{equation}\label{NCpoly:normalisation}
\frac{1}{|\sigma|}\int_\sigma \phi_{K,\sigma}=1\,,\mbox{ and }
\int_{\sigma'} \phi_{K,\sigma}=0\quad\forall \sigma'\in\mathcal F_K\backslash\{\sigma\}.
\end{equation}
We then define the local space on $K$ of the LEPNC method by
\begin{equation}\label{eq:local.V}
V^{\textsc{\tiny LEPNC}}_K:={\rm span}(\mathbb{P}^1(K)\cup\{\phi_{K,\sigma}\,:\,\sigma\in\mathcal F_K\}).
\end{equation}
The component $\mathbb{P}^1(K)$ will be responsible for the approximation properties of the global space, whereas the face-based basis functions will be used to glue local spaces together and ensure \eqref{ncfe:eq:sameav}.

\begin{remark}[Nature of the functions in the local space]
The functions of $V^{\textsc{\tiny LEPNC}}_K$ are continuous on $K$, and polynomial in each pyramid $D_{K,\sigma}$ for $\sigma\in\mathcal F_K$. 
The maximal polynomial degree of functions in $V^{\textsc{\tiny LEPNC}}_K$ is $\max_{\sigma\in\mathcal F_K}{\rm Card}(\mathcal E_\sigma)$, where $\mathcal E_\sigma$ is the set of edges of $\sigma$ (vertices in 2D, in which case the maximal degree is 2).

A practical implementation of any non-conforming method requires to integrate the local functions and their gradients on each cell. 
For $V^{\textsc{\tiny LEPNC}}_K$, this is very easy: one simply has to select quadrature rules in $K$ that are constructed by assembling quadrature rules on each pyramid. 
This is actually a standard way of constructing quadrature rules on polytopal cells, these pyramids being then cut into tetrahedra on which quadrature rules are known.
\end{remark}

\subsection{Global LEPNC space and basis of functions}\label{NCpoly:sec:spaceV}

The global non-conforming space of the Locally Enriched Polytopal Non-Conforming method is 
\begin{equation}\label{def:V.T}
V^{\textsc{\tiny LEPNC}}_{\mathfrak{T},0}=\{v\in H^1_{\mathfrak T,0}\,:\,v_{|K}\in V^{\textsc{\tiny LEPNC}}_K\quad\forall K\in\mathcal M\}.
\end{equation}
By construction of $(V^{\textsc{\tiny LEPNC}}_K)_{K\in\mathcal M}$, an explicit and local basis of $V^{\textsc{\tiny LEPNC}}_{\mathfrak{T},0}$ can be constructed thanks to the functions $(\phi_{K,\sigma})_{K\in\mathcal M\,,\;\sigma\in\mathcal F_K}$.
For each $\sigma\in\mathcal F$, first define the function $\phi_\sigma:\Omega\to\mathbb{R}$ by patching the local functions, in the cells on each side of $\sigma$, associated with $\sigma$:
\begin{equation}\label{NCpoly:def.phisigma}
(\phi_\sigma)_{|K}=\phi_{K,\sigma}\quad\forall K\in\mathcal M_\sigma\,,\qquad (\phi_\sigma)_{|L}=0\mbox{ if $L\not\in\mathcal M_\sigma$}.
\end{equation}
The properties \eqref{NCpoly:normalisation} ensure that $\phi_\sigma$ satisfies 1. and 2. in Definition \ref{chapncfe:def:ncspace} (it also satisfies 3. if $\sigma\in\mathcal F_{\rm int}$). 
We also note that each $\phi_\sigma$ is a sort of bubble function on the diamond $D_\sigma$, as it vanishes on all its faces (but, contrary to standard bubble functions, $\phi_\sigma$ is not in $H^1(D_\sigma)$).

We then select, for each $K\in\mathcal M$, $d+1$ vertices $(\mathbi{s}_0,\ldots,\mathbi{s}_d)$ of $K$ which maximise the volume of their convex hull, that is, maximise their determinant; in fact the determinant only needs to be non-zero, but maximising it leads to better conditioned matrices. 
We then define the nodal basis $(\psi_{K,i})_{i=0,\ldots,d}$ of $\mathbb{P}^1(K)$ associated to these vertices, that is, the basis that satisfies $\psi_{K,i}(\mathbi{s}_j) = 1$ if $i=j$ and $0$ if $i\neq j$. 
We will see in Section \ref{NCpoly:sec:ml} that this choice is relevant for mass lumping techniques. For each $i=0,\ldots,d$, we set
\begin{equation}\label{NCpoly:def.phiKi}
\phi_{K,i}=\psi_{K,i}-\sum_{\sigma\in\mathcal F_K}\overline{\psi}_{K,i,\sigma}\phi_{K,\sigma}\quad\mbox{ with }\quad
\overline{\psi}_{K,i,\sigma}=\frac{1}{|\sigma|}\int_\sigma \psi_{K,i}.
\end{equation}
This choice ensures that
\begin{equation}\label{NCpoly:phiKi.zero}
\int_\sigma \phi_{K,i}=0\qquad\forall \sigma\in\mathcal F_K.
\end{equation}
Extended by 0 outside $K$, each $\phi_{K,i}$ therefore belongs to $H^1_{\mathfrak T,0}$. It can also easily be checked that $\{\phi_{K,i}\,:\,i=0,\ldots,d\}\cup\{\phi_{K,\sigma}\,:\,\sigma\in\mathcal F_K\}$ spans $V^{\textsc{\tiny LEPNC}}_K$ (the basis $(\psi_{K,i})_{i=0,\ldots,d}$ of $\mathbb{P}^1(K)$ can be obtained by linear combinations of these functions).
As shown in the following lemma, a basis of $V^{\textsc{\tiny LEPNC}}_{\mathfrak{T},0}$ is then obtained by gathering all the functions \eqref{NCpoly:def.phisigma} (for internal faces) and \eqref{NCpoly:def.phiKi}.

\begin{lemma}[Basis of the LEPNC global space]
\label{NCpoly:lem.basis}
The following family forms a basis of $V^{\textsc{\tiny LEPNC}}_{\mathfrak{T},0}$ defined by \eqref{def:V.T}:
\begin{equation}\label{NCpoly:basis.V}
\{\phi_{K,i}\,:\,K\in\mathcal M\,,\;i=0,\ldots,d\}\cup\{\phi_\sigma\,:\,\sigma\in\mathcal F_{\rm int}\}.
\end{equation}
 Moreover, for any $v\in V^{\textsc{\tiny LEPNC}}_{\mathfrak{T},0}$ we have
\begin{equation}\label{NCpoly:dec.v}
v=\sum_{K\in\mathcal M}\sum_{i=0}^d v_{K,i}\phi_{K,i}+\sum_{\sigma\in\mathcal F_{\rm int}}v_\sigma\phi_\sigma,
\end{equation}
with
\begin{equation}\label{NCpoly:eq.lambda.sigma}
v_\sigma=\frac{1}{|\sigma|}\int_\sigma v\qquad\forall\sigma\in\mathcal F_{\rm int}.
\end{equation}
and, for all $K\in \mathcal M$ ,
\begin{equation}\label{NCpoly:eq.vertex}
v_{K,i} = v_{|K}(\mathbi{s}_i)\quad\forall i=0,\ldots,d.
\end{equation}
\end{lemma}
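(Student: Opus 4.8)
The plan is to establish three things: (i) every function in the list \eqref{NCpoly:basis.V} belongs to $V^{\textsc{\tiny LEPNC}}_{\mathfrak{T},0}$; (ii) the list spans $V^{\textsc{\tiny LEPNC}}_{\mathfrak{T},0}$, with the explicit decomposition \eqref{NCpoly:dec.v}; and (iii) the list is linearly independent. Membership is essentially already justified in the text preceding the lemma: each $\phi_\sigma$ for $\sigma\in\mathcal F_{\rm int}$ satisfies conditions 1., 2., 3. of Definition \ref{chapncfe:def:ncspace} by the normalisation \eqref{NCpoly:normalisation}, and its restriction to any cell is either $\phi_{K,\sigma}\in V^{\textsc{\tiny LEPNC}}_K$ or $0$; each $\phi_{K,i}$, extended by $0$, has zero average on every face of $K$ by \eqref{NCpoly:phiKi.zero} (hence also on all faces of neighbouring cells, since it vanishes identically there), so it lies in $H^1_{\mathfrak T,0}$, and its restriction to $K$ is in $\mathbb{P}^1(K)+{\rm span}\{\phi_{K,\sigma}\}\subset V^{\textsc{\tiny LEPNC}}_K$.

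For the spanning part, I would take an arbitrary $v\in V^{\textsc{\tiny LEPNC}}_{\mathfrak{T},0}$ and work cell by cell. On each $K$, $v_{|K}\in V^{\textsc{\tiny LEPNC}}_K={\rm span}(\mathbb{P}^1(K)\cup\{\phi_{K,\sigma}\})$; I would first argue that this sum is actually direct (a nonzero linear combination of the $\phi_{K,\sigma}$ cannot equal a $\mathbb{P}^1$ function on $K$, since each $\phi_{K,\sigma}$ is supported in the single half-diamond $D_{K,\sigma}$ and is non-polynomial there whenever ${\rm Card}(\mathcal F_{K\sigma,{\rm int}})\geq 1$, which holds once $K$ has more than... — more carefully, one compares averages on faces: a $\mathbb P^1$ function with zero average on every face of $K$ must vanish, using strict star-shapedness). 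Granting this, write $v_{|K}=p_K+\sum_{\sigma\in\mathcal F_K}\alpha_{K,\sigma}\phi_{K,\sigma}$ with $p_K\in\mathbb{P}^1(K)$ uniquely determined. Taking the average of $v_{|K}$ over a face $\sigma\in\mathcal F_K$ and using \eqref{NCpoly:normalisation} gives $\frac1{|\sigma|}\int_\sigma v = \frac1{|\sigma|}\int_\sigma p_K + \alpha_{K,\sigma}$. Now expand $p_K=\sum_{i=0}^d v_{|K}(\mathbi{s}_i)\psi_{K,i}$ in its own nodal basis and substitute into \eqref{NCpoly:def.phiKi}: one checks that $v_{|K}=\sum_i v_{|K}(\mathbi{s}_i)\phi_{K,i}+\sum_{\sigma\in\mathcal F_K}\big(\alpha_{K,\sigma}+\frac1{|\sigma|}\int_\sigma p_K\big)\phi_{K,\sigma}=\sum_i v_{|K}(\mathbi{s}_i)\phi_{K,i}+\sum_{\sigma\in\mathcal F_K}\big(\frac1{|\sigma|}\int_\sigma v\big)\phi_{K,\sigma}$. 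Summing over $K$ and invoking the continuity of averages \eqref{ncfe:eq:sameav} to see that $\frac1{|\sigma|}\int_\sigma v$ is single-valued on internal faces (and vanishes on external ones by \eqref{ncfe:eq:avnull}, so those terms drop), the two-sided contributions $\phi_{K,\sigma}$ assemble into $\phi_\sigma$ for $\sigma\in\mathcal F_{\rm int}$. This yields exactly \eqref{NCpoly:dec.v} with the stated formulas \eqref{NCpoly:eq.lambda.sigma} and \eqref{NCpoly:eq.vertex} for the coefficients.

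Linear independence then follows from uniqueness of these coefficients: if a linear combination $\sum_{K,i}v_{K,i}\phi_{K,i}+\sum_{\sigma\in\mathcal F_{\rm int}}v_\sigma\phi_\sigma$ is the zero function, apply the two recovery formulas — evaluating at the vertex $\mathbi{s}_i$ of $K$ (noting $\phi_\sigma$ and $\phi_{K',i'}$ for $K'\neq K$ vanish there, and $\phi_{K,\sigma}$ vanishes at vertices of $K$ not... — more simply, using \eqref{NCpoly:eq.vertex}, which was derived for any element of the space) forces $v_{K,i}=0$, and taking the face average $\frac1{|\sigma|}\int_\sigma(\cdot)$ forces $v_\sigma=0$ via \eqref{NCpoly:eq.lambda.sigma}. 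Alternatively one can argue locally on each half-diamond. The main obstacle I anticipate is the directness of the sum defining $V^{\textsc{\tiny LEPNC}}_K$ in \eqref{eq:local.V}, i.e.\ that $\dim V^{\textsc{\tiny LEPNC}}_K=(d+1)+{\rm Card}(\mathcal F_K)$: one must rule out linear dependencies among $\mathbb P^1(K)$ and the $\phi_{K,\sigma}$, which relies on the support/non-polynomiality of $\phi_{K,\sigma}$ in its half-diamond together with the normalisation \eqref{NCpoly:normalisation}; once this local fact is in hand, the global statement is bookkeeping with face averages. A minor edge case worth a sentence is a cell that is a simplex, where $\mathcal F_{K\sigma,{\rm int}}$ has a single element and $\phi_{K,\sigma}$ is merely piecewise affine, so the argument for directness must use the half-diamond support rather than the degree.
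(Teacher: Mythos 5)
Your proof is correct and follows essentially the same route as the paper's: decompose $v_{|K}$ locally, recover the face coefficients by averaging over each face (using \eqref{NCpoly:normalisation} and \eqref{NCpoly:phiKi.zero}) and the cell coefficients by evaluating at the vertices $\mathbi{s}_i$ (using that the $\phi_{K,\sigma}$ vanish at the vertices of $K$), then glue the local identities via the single-valuedness of face averages, with linear independence following from the uniqueness of the recovered coefficients. The only superfluous step is your worry about the directness of the sum defining $V^{\textsc{\tiny LEPNC}}_K$ in \eqref{eq:local.V}: since the recovered coefficients are linear functionals applied to $v$ itself, any representation of $v_{|K}$ in the spanning family suffices, which is exactly how the paper's proof proceeds.
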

 
\begin{remark}[Single-valuedness of $v_\sigma$]\label{rem:unique.vs}
We note that, since $v\in H^1_{\mathfrak{T},0}$, the condition \eqref{ncfe:eq:sameav} ensures that $v_\sigma$ is uniquely defined by \eqref{NCpoly:eq.lambda.sigma} (it depends only on $\sigma$, not on the choice of a cell in $\mathcal M_\sigma$ in which we would consider the values of $v$).
\end{remark}

\begin{proof}
Proving \eqref{NCpoly:dec.v}--\eqref{NCpoly:eq.vertex} for a generic $v\in V^{\textsc{\tiny LEPNC}}_{\mathfrak{T},0}$ shows that \eqref{NCpoly:basis.V} spans this space, and also that it is a linearly independent family since all coefficients in the right-hand side of \eqref{NCpoly:dec.v} vanish when the left-hand side $v$ vanishes.

Let us take $v\in V^{\textsc{\tiny LEPNC}}_{\mathfrak{T},0}$. It suffices to show that \eqref{NCpoly:dec.v} holds on each cell $K\in\mathcal M$. Since $\{\phi_{K,i}\,:\,i=0,\ldots,d\}\cup\{\phi_{K,\sigma}\,:\,\sigma\in\mathcal F_K\}$ spans $V^{\textsc{\tiny LEPNC}}_K \ni v_{|K}$, there are coefficients $(\lambda_{K,i})_{i=0,\ldots,d}$ and $(\lambda_{K,\sigma})_{\sigma\in\mathcal F_K}$ such that
\begin{equation}\label{eq:v.local.dec}
v_{|K}=\sum_{i=0}^d\lambda_{K,i}\phi_{K,i} + \sum_{\sigma\in\mathcal F_K}\lambda_{K,\sigma}\phi_\sigma.
\end{equation}
Taking the average over one face $\sigma\in\mathcal F_K$ and using \eqref{NCpoly:normalisation} and \eqref{NCpoly:phiKi.zero}, we obtain 
\[
\lambda_{K, \sigma}=\frac{1}{|\sigma|}\int_\sigma v_{|K}.
\]
Hence, by Remark \ref{rem:unique.vs}, $\lambda_{K,\sigma}=v_\sigma$ defined by \eqref{NCpoly:eq.lambda.sigma}. 
Applying now \eqref{eq:v.local.dec} at one of the vertices $\mathbi{s}_i$, recalling the definition \eqref{NCpoly:def.phiKi}, the fact that $(\psi_{K,j})_{j=0,\ldots,d}$ is the nodal basis associated with $(\mathbi{s}_j)_{j=0,\ldots,d}$, and noticing that all functions $\phi_{K,\sigma}$ vanish at the vertices of $K$ (consequence of \eqref{NCpoly:def.phiKsigma} and of the fact that each vertex either does not belong to $D_{K,\sigma}$, or belongs to one face in $\mathcal F_{K\sigma, {\rm int}}$), we see that $v_{|K}(\mathbi{s}_i)=\lambda_{K,i}$. To summarise, \eqref{eq:v.local.dec} is written
\begin{equation}\label{eq:v.local.dec2}
v_{|K}=\sum_{i=0}^dv_{K,i}\phi_{K,i} + \sum_{\sigma\in\mathcal F_K\cap\mathcal F_{\rm int}}v_{\sigma}\phi_\sigma,
\end{equation}
the restriction of the last sum to internal edges coming from $\int_\sigma v=0$ whenever $\sigma\in\mathcal F_{\rm ext}$, see \eqref{ncfe:eq:avnull}. Since all functions $\phi_{L,i}$ vanish on $K$ whenever $L\not=K$, and all $\phi_\sigma$ vanish on $K$ whenever $\sigma\not\in \mathcal F_K$, \eqref{eq:v.local.dec2} proves that \eqref{NCpoly:dec.v} holds on $K$.
\end{proof}

Let $C(\mathcal M)$ denote the functions whose restriction to each $K\in\mathcal M$ is continuous on $\overline{K}$. 
Lemma \ref{NCpoly:lem.basis} shows us how to define a natural interpolator $\mathcal I_{\mathfrak{T}}:H^1(\Omega)\cap C(\mathcal{M})\to V^{\textsc{\tiny LEPNC}}_{\mathfrak{T},0}$: for all $u\in H^1(\Omega)\cap C(\mathcal{M})$:
\begin{subequations}\label{NCpoly:def:ID}
\begin{equation}\label{NCpoly:def:ID.expr}
\mathcal I_{\mathfrak{T}}u=\sum_{K\in\mathcal M}\sum_{i=0}^d u_{K,i}\phi_{K,i}+\sum_{\sigma\in\mathcal F_{\rm int}}u_\sigma\phi_\sigma
\end{equation}
where $(u_\sigma)_{\sigma\in\mathcal F_{\rm int}}$ and $(u_{K,i})_{K\in\mathcal M,\,i=0,\cdots,d}$ are defined by
\begin{align}
\label{NCpoly:interpolator.sigma}
&u_\sigma=\frac{1}{|\sigma|}\int_\sigma u\quad\forall \sigma\in\mathcal F_{\rm int}\,,\\
\label{NCpoly:interpolator.K}
&u_{K,i} = u_{|K}(\mathbi{s}_i)\quad\forall K\in\mathcal M\,,\;\forall i=0,\ldots,d.
\end{align}
\end{subequations}

\begin{remark}[The need to enrich the bubble functions]\label{rem:need.bubble}
As shown by the above construction (see in particular \eqref{NCpoly:def.phiKi}), the design of a finite-dimensional subspace of the non-conforming space $H^1_{\mathfrak T,0}$ requires access, for each face $\sigma$ of each cell $K$, to a local basis function that has average 1 on $\sigma$ and 0 on all other faces of $K$. 
Instead of using the bubble functions \eqref{NCpoly:def.phiKsigma}, an alternative idea is to use a rich enough space of polynomial functions. 
The question of ``how rich'' this space should be (which degree the polynomials should have) is however not easy to answer, when considering generic polytopal meshes.

\begin{figure}
\begin{center}
\input{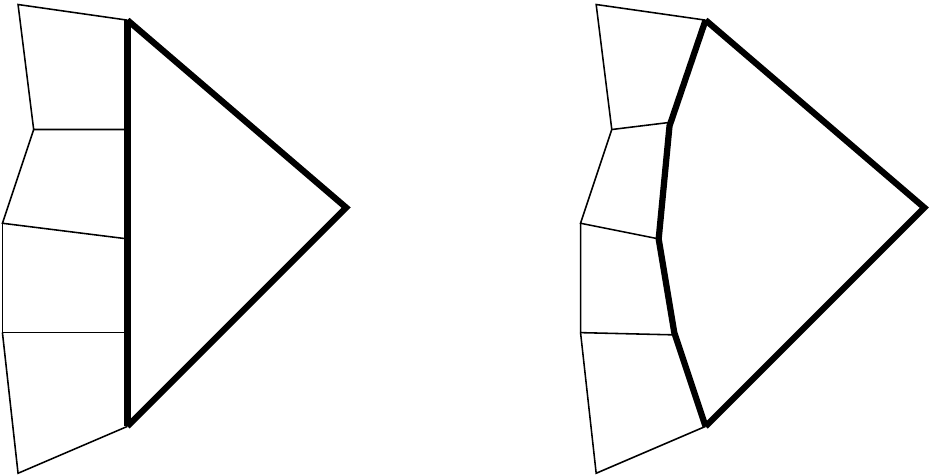_t}
\end{center}
\caption{Hexagons with aligned (left) and almost aligned (right) edges.
\label{fig:aligned}
}
\end{figure}

Consider for example the cell $K$ on the left of Fig. \ref{fig:aligned}, an hexagon with 4 aligned edges. Since it has a total of 6 edges, the minimum local space of polynomial should be $\mathbb{P}^2(K)$, which has dimension 6. 
However, the restrictions of functions in $\mathbb{P}^2(K)$ on the line of the aligned edges are polynomials of degree 2 in dimension 1, and form therefore a space of dimension 3. This space is not large enough to contain, for each of the 4 edges, a function with  average 1 on this edge and 0 on all other edges. 
This shows that we should at least consider $\mathbb{P}^3(K)$ as the local polynomial space on $K$; note that this argument only discusses the space dimension: it would still have to be fully established that $\mathbb{P}^3(K)$ is indeed rich enough.

The situation is perhaps more severe, from the robustness point of view, for the hexagon $L$ on the right of Fig. \ref{fig:aligned}. 
Since its edges are not aligned, from the pure dimensional point of view it might be sufficient to consider $\mathbb{P}^2(L)$ as the local polynomial space on $L$. 
However, because $L$ has \emph{almost aligned} edges, the basis functions we would construct (with average 1 on one edge and 0 on all other edges) would form an ``almost dependent'' set of functions -- even more so as the edges become more and more aligned, e.g.\ along a sequence of refined meshes. 
The practical consequence is that, in an implementation of the scheme using these basis functions, some local mass or stiffness matrices would be close to singular, which would lead to an ill-conditioned global system and a poor numerical resolution.

On the contrary, the usage of the (piecewise-polynomial) basis functions \eqref{NCpoly:def.phiKsigma} solves these two issues: the local space is always defined as the span of $\mathbb{P}^1$ and the bubble functions, independently of the cell geometry, and, even when edges become aligned, the basis functions remain well independent (recall that the vertices $(\mathbf{s}_0,\ldots,\mathbf{s}_d)$ are chosen in each cell to maximise the volume they encompass and thus, in Fig. \ref{fig:aligned}, they would be chosen as the three leftmost vertices in each case and would not become aligned or close to aligned).
\end{remark}

\subsection{Approximation properties of the LEPNC space}\label{sec:appprobpnc}

The approximation properties of the LEPNC space require a slightly more stringent, but still very flexible, regularity condition on the meshes than the boundedness of $\gamma_{\mathfrak T}$ (see \eqref{def:reg.gamma}).

\begin{definition}[$\rho$-regular polytope and polytopal mesh]\label{def:rhoregpoly}
A polytopal open set $K\subset\mathbb{R}^d$ is said to be a $\rho$-regular polytope, where $\rho>0$, if:
\begin{enumerate}
 \item There exists $\mathbi{x} _{K}\in K$ and open disjoint simplices $(K_i)_{i=1,\ldots,n}$ such that $\overline{K} = \bigcup_{i=1}^n \overline{K}_i$,  and, for $i=1,\ldots,n$, $\mathbi{x} _{K}$ is a vertex of $K_i$, exactly one face of $K_i$ is included in $\partial K$ and all the other faces of $K_i$ are common with a neighbouring simplex $K_j$.
 \item There exists $\mathbi{x} _{K_i}\in K_i$ such that $B(\mathbi{x} _{K_i},\rho h_K)\subset K_i$.
\end{enumerate}
A $\rho$-regular polytopal mesh of $\Omega$ is a polytopal mesh $\mathfrak{T}$ as per Definition \ref{def:polymesh}, such that any cell $K\in\mathcal{M}$ is a $\rho$-regular polytope and if, for any simplex $K_i$ as above, there exists $\sigma\in\mathcal F_K$ such that one face of $K_i$ is included in $\sigma$.
\end{definition}

\begin{remark}[$\rho$-regular polytope and polytopal mesh]\label{rem:rho.poly.nb.simplices}
The number $n$ in  Definition \ref{def:rhoregpoly} is always bounded by $1/\rho^d$, the ratio of the measure of $B(\mathbi{x} _K,h_K)$ and that of $B(\mathbi{x} _{K_i},\rho h_K)$. As a consequence, it can be easily checked that $\gamma_{\mathfrak T}$ (defined by \eqref{def:reg.gamma}) is bounded above by a real number depending only on $\rho$.

The additional requirement, for a polytopal mesh, that one face of $K_i$ is included in one of the mesh face prevents the situation where the face of $K_i$ that lies in $\partial K$ is actually split between two mesh faces (the mesh faces could be different from the geometrical faces of its elements, e.g. in case of non-conforming meshes with hanging nodes).
\end{remark}
To state approximation properties of the global non-conforming space \eqref{def:V.T}, we first define an alternate interpolator, which does not require the functions to be continuous on each cell and therefore enjoys boundedness properties for a larger class of functions. For all $K\in\mathcal{M}$, let $\mathcal J_K: H^1(K)\to V^{\textsc{\tiny LEPNC}}_K$ be such that
 \begin{equation}\label{NCpoly:defJK}
 {\mathcal J}_{K}u=\mathcal J_{\mathcal F_K}u + P_K (u - \mathcal J_{\mathcal F_K}u)\quad\forall u\in H^1(K),
 \end{equation}
 where
 \begin{equation}\label{NCpoly:defJK.bdry}
 \mathcal J_{\mathcal F_K}u=\sum_{\sigma\in\mathcal F_K}u_\sigma\phi_{K,\sigma}\quad\mbox{ with $(u_\sigma)_{\sigma\in\mathcal F_K}$ given by \eqref{NCpoly:interpolator.sigma}},
 \end{equation}
 and $P_K:L^2(K)\to V^{\textsc{\tiny LEPNC}}_K$ is the $L^2$-orthogonal projector on ${\rm span}\{\phi_{K,i}\,:\,i=0,\ldots,d\}$. The global interpolator $\mathcal J_{\mathfrak{T}}:H^1_0(\Omega)\to V^{\textsc{\tiny LEPNC}}_{\mathfrak{T},0}$ is obtained patching the local ones:
 \[
 (\mathcal J_{\mathfrak{T}}u)_{|K}=\mathcal J_K (u_{|K})\quad\forall u\in H^1_0(\Omega)\,,\;\forall K\in\mathcal{M}.
 \]
 Using \eqref{NCpoly:normalisation} and \eqref{NCpoly:phiKi.zero}, it is easily verified that $\mathcal J_{\mathfrak{T}}u$ indeed belongs to $V^{\textsc{\tiny LEPNC}}_{\mathfrak{T},0}$. 

 \begin{theorem}[Approximation properties of {$V^{\textsc{\tiny LEPNC}}_{\mathfrak{T},0}$}]\label{NCpoly:th.approx}
Assume that $\mathfrak T$ is a $\rho$-regular polytopal mesh. Then, there exists $C$ depending only on $\rho$ such that
\begin{equation}\label{NCpoly:eq.approx.JD}
\|u-\mathcal J_{\mathfrak{T}}u\|_{L^2(\Omega)} + h_{\mathcal M}\|\nabla_{\mathcal M}(u-\mathcal J_{\mathfrak{T}}u)\|_{L^2(\Omega)}
\le Ch_{\mathcal M}^2|u|_{H^2(\Omega)}\quad\forall u\in H^1_0(\Omega)\cap H^2(\Omega),
\end{equation}
where $|{\cdot}|_{H^2(\Omega)}$ denotes the $H^2(\Omega)$-seminorm.
\end{theorem}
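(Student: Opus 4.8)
The plan is to prove the estimate locally on each cell $K$ and then sum over the mesh. The key point is that $\mathcal J_{\mathfrak T}$ is built to preserve two things: the face-averages (via $\mathcal J_{\mathcal F_K}$ and the normalisation \eqref{NCpoly:normalisation}) and, once those are subtracted off, an $L^2$-projection onto $\mathrm{span}\{\phi_{K,i}\}$. First I would establish a local approximation result: for all $u\in H^2(K)$,
\[
\|u-\mathcal J_K u\|_{L^2(K)} + h_K\|\nabla(u-\mathcal J_K u)\|_{L^2(K)}\le C h_K^2 |u|_{H^2(K)},
\]
with $C$ depending only on $\rho$. Summing the squares of these over $K\in\mathcal M$, using that $(\mathcal J_{\mathfrak T}u)_{|K}=\mathcal J_K(u_{|K})$ and that the $H^2$-seminorm is additive over cells, yields \eqref{NCpoly:eq.approx.JD}.

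To get the local estimate I would proceed in three steps. Step one: reduce to a $\mathbb P^1$-invariance/Bramble--Hilbert argument. The operator $\mathcal J_K$ fixes affine functions: if $u\in\mathbb P^1(K)$, then $u_\sigma=\frac{1}{|\sigma|}\int_\sigma u$ are exactly its face-averages, $\mathcal J_{\mathcal F_K}u$ matches $u$ in every face-average (by \eqref{NCpoly:normalisation}), so $u-\mathcal J_{\mathcal F_K}u$ has zero average on every face, hence is $L^2$-orthogonal to nothing automatic — but since $u-\mathcal J_{\mathcal F_K}u\in V^{\textsc{\tiny LEPNC}}_K$ and the space decomposes as $\mathrm{span}\{\phi_{K,i}\}\oplus\mathrm{span}\{\phi_{K,\sigma}\}$ with $\phi_{K,i}$ having zero face-averages, one checks $u-\mathcal J_{\mathcal F_K}u\in\mathrm{span}\{\phi_{K,i}\}$ so $P_K(u-\mathcal J_{\mathcal F_K}u)=u-\mathcal J_{\mathcal F_K}u$, giving $\mathcal J_K u = u$. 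Step two: bound $\mathcal J_K$ in the right norms. This needs (a) $\|\phi_{K,\sigma}\|_{L^2(K)}\lesssim |K|^{1/2}$ and $\|\nabla\phi_{K,\sigma}\|_{L^2(K)}\lesssim |K|^{1/2}h_K^{-1}$, together with a trace-type bound $|u_\sigma|\le\frac{1}{|\sigma|}\int_\sigma |u|\lesssim |\sigma|^{-1/2}\|u\|_{L^2(\sigma)}$ and a scaled trace inequality $\|u\|_{L^2(\sigma)}^2\lesssim h_K^{-1}\|u\|_{L^2(K)}^2 + h_K\|\nabla u\|_{L^2(K)}^2$ valid on $\rho$-regular polytopes; and (b) the $L^2$-stability of $P_K$ which is automatic, plus an inverse inequality $\|\nabla w\|_{L^2(K)}\lesssim h_K^{-1}\|w\|_{L^2(K)}$ on $w\in\mathrm{span}\{\phi_{K,i}\}$, which follows from a scaling argument on a fixed-dimensional space once one knows the $\phi_{K,i}$ are a well-conditioned basis. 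Step three: combine via Bramble--Hilbert. Writing $u-\mathcal J_K u = (u - \pi)-\mathcal J_K(u-\pi)$ for $\pi$ the best affine (or averaged Taylor) approximation of $u$ on $K$, applying the stability bounds of step two to $u-\pi$, and using $\|u-\pi\|_{L^2(K)}\lesssim h_K^2|u|_{H^2(K)}$, $\|\nabla(u-\pi)\|_{L^2(K)}\lesssim h_K|u|_{H^2(K)}$ gives the result.

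The main obstacle I expect is controlling the $\phi_{K,\sigma}$ (and $\phi_{K,i}$) uniformly in the mesh regularity: these are piecewise polynomials on the sub-pyramids $D_{K,\sigma}$ whose degree equals the number of edges of $\sigma$, so their scaling constants a priori depend on that degree and on the geometry of the sub-simplices. This is exactly where $\rho$-regularity (Definition \ref{def:rhoregpoly}, with the bound $n\le\rho^{-d}$ from Remark \ref{rem:rho.poly.nb.simplices}) is needed: it caps the number of sub-simplices and hence the polynomial degree, and provides interior balls of radius $\rho h_K$ in each sub-simplex, so standard affine-equivalence/scaling arguments apply with constants depending only on $\rho$ and $d$. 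One must also verify the normalisation constant $c_{K,\sigma}$ in \eqref{NCpoly:def.phiKsigma} scales like $h_K^{-(\text{number of internal faces})}$ so that $\phi_{K,\sigma}=O(1)$ and $\nabla\phi_{K,\sigma}=O(h_K^{-1})$; this is the computation that secretly uses $d_{K,\sigma}\ge\rho h_K$ (up to constants) from $\rho$-regularity. Once these geometric scaling facts are in hand, the rest is the routine Bramble--Hilbert packaging.
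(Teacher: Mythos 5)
Your proposal is correct and follows essentially the same route as the paper: establish the $\mathbb{P}^1$-invariance of $\mathcal J_K$, prove $L^2$ and $H^1$ stability of $\mathcal J_K$ via the uniform bounds $|\phi_{K,\sigma}|\le C$ and $|\nabla\phi_{K,\sigma}|\le C h_K^{-1}$ (obtained by estimating $c_{K,\sigma}$ through the $\rho$-regularity), combine with a Bramble--Hilbert/best-affine-approximation argument on each cell, and sum over the mesh. The minor variations (trace inequality applied to $u$ rather than a Poincar\'e estimate on $u-\overline{u}_K$; invoking well-conditioning of the $\phi_{K,i}$ where a plain inverse inequality on piecewise polynomials of bounded degree over the regular subdivision suffices) do not change the substance of the argument.
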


\begin{remark}[Approximation properties in generic Sobolev spaces]
Using the results of \cite[Chapter 1]{hho-book}, a straightforward adaptation of the proof below
shows that the approximation property \eqref{NCpoly:eq.approx.JD} also holds with $L^2$, $H^1_0$ and $H^2$ replaced by $L^p$, $W^{1,p}_0$ and $W^{2,p}$, for any $p\in [1,\infty)$.
\end{remark}

Before proving this theorem, let us estabish the boundedness of the local interpolator $\mathcal J_K$.

\begin{lemma}[Boundedness of {$\mathcal J_K$}]\label{NCpoly:lem.bound.interp}
Assume that $K$ is a $\rho$-regular polytope. Then, there exists $C>0$ depending only on $\rho$ such that, for all $u\in H^1(K)$,
\begin{align}\label{NCpoly:eq.bound.JK}
\|\mathcal J_K u\|_{L^2(K)}\le{}& C (\|u\|_{L^2(K)}+h_K\|\nabla u\|_{L^2(K)^d})\,,\\
\label{NCpoly:eq.bound.JK.grad}
\|\nabla \mathcal J_K u\|_{L^2(K)^d}\le{}& C \|\nabla u\|_{L^2(K)^d}.
\end{align}
\end{lemma}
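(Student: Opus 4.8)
The plan is to estimate separately the two pieces of $\mathcal J_K u = \mathcal J_{\mathcal F_K}u + P_K(u-\mathcal J_{\mathcal F_K}u)$, using the $\rho$-regularity of $K$ to get scaling-invariant control of the relevant bubble and $\mathbb P^1$ basis functions. First I would establish a ``local bubble estimate'': for each $\sigma \in \mathcal F_K$, control $\|\phi_{K,\sigma}\|_{L^2(K)}$ and $\|\nabla \phi_{K,\sigma}\|_{L^2(K)^d}$ in terms of $\rho$ and powers of $h_K$. Here the normalisation \eqref{NCpoly:normalisation} fixes $c_{K,\sigma}$; one writes $\phi_{K,\sigma}$ on the half-diamond $D_{K,\sigma}$ as a product of affine functions (the distances to the internal faces $\tau \in \mathcal F_{K\sigma,{\rm int}}$), each bounded by $h_K$, and one uses the $\rho$-regularity (each $K_i$ contains a ball of radius $\rho h_K$, and by Remark \ref{rem:rho.poly.nb.simplices} there are at most $\rho^{-d}$ of them) to bound $c_{K,\sigma}$ and hence $\|\phi_{K,\sigma}\|_{L^\infty(K)}$ by a constant depending only on $\rho$; the gradient picks up one factor $h_K^{-1}$ by differentiating one affine factor. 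Since $D_{K,\sigma}$ has measure $\lesssim h_K^d$, this gives $\|\phi_{K,\sigma}\|_{L^2(K)} \le C h_K^{d/2}$ and $\|\nabla\phi_{K,\sigma}\|_{L^2(K)^d} \le C h_K^{d/2-1}$ with $C = C(\rho)$.

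Next I would bound $\mathcal J_{\mathcal F_K}u = \sum_{\sigma\in\mathcal F_K} u_\sigma \phi_{K,\sigma}$ with $u_\sigma = \frac{1}{|\sigma|}\int_\sigma u$. The key is a trace/averaging estimate: $|u_\sigma| \le C(\rho)\,h_K^{-d/2}(\|u\|_{L^2(K)} + h_K\|\nabla u\|_{L^2(K)^d})$, which follows from the continuous trace inequality on the $\rho$-regular cell $K$ combined with $|\sigma| \gtrsim h_K^{d-1}$ and a scaling argument (or directly from the bound $\|v\|_{L^2(\partial K)}^2 \le C(h_K^{-1}\|v\|_{L^2(K)}^2 + h_K\|\nabla v\|_{L^2(K)^d}^2)$). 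Combined with the bubble estimates and ${\rm Card}(\mathcal F_K) \le \gamma_{\mathfrak T} \le C(\rho)$, this yields $\|\mathcal J_{\mathcal F_K}u\|_{L^2(K)} \le C(\|u\|_{L^2(K)}+h_K\|\nabla u\|_{L^2(K)^d})$ and $\|\nabla \mathcal J_{\mathcal F_K}u\|_{L^2(K)^d} \le C(h_K^{-1}\|u\|_{L^2(K)}+\|\nabla u\|_{L^2(K)^d})$.

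For the second term, since $P_K$ is the $L^2(K)$-orthogonal projection onto $W_K:={\rm span}\{\phi_{K,i}: i=0,\ldots,d\}$, we have $\|P_K(u-\mathcal J_{\mathcal F_K}u)\|_{L^2(K)} \le \|u-\mathcal J_{\mathcal F_K}u\|_{L^2(K)}$, and the $L^2$-bound on $\mathcal J_K u$ follows from the triangle inequality and the previous step. For the gradient, I would use a finite-dimensional inverse inequality on $W_K$: $\|\nabla w\|_{L^2(K)^d} \le C(\rho)\,h_K^{-1}\|w\|_{L^2(K)}$ for all $w\in W_K$. This holds because $W_K$ is spanned by the $d+1$ functions $\phi_{K,i} = \psi_{K,i} - \sum_\sigma \overline\psi_{K,i,\sigma}\phi_{K,\sigma}$, whose gradients and $L^2$ norms are controlled via the estimates above together with the fact that the vertices $(\mathbi s_0,\ldots,\mathbi s_d)$ are chosen to maximise the enclosed volume, which bounds $\|\nabla \psi_{K,i}\|_{L^\infty}$ by $C(\rho)h_K^{-1}$ and keeps the Gram matrix of $(\phi_{K,i})$ uniformly nondegenerate after scaling by $h_K^{-d}$. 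Applying the inverse inequality to $w = P_K(u-\mathcal J_{\mathcal F_K}u)$, then $\|P_K(\cdot)\|_{L^2}\le\|u-\mathcal J_{\mathcal F_K}u\|_{L^2}$, the $L^2$-bound on $\mathcal J_{\mathcal F_K}u$, and a Poincaré–Wirtinger step to absorb the stray $h_K^{-1}\|u\|_{L^2(K)}$ using $\int_\sigma(u-\mathcal J_{\mathcal F_K}u)=0$ (so this difference has zero face-averages and its $L^2$-norm is controlled by $h_K\|\nabla u\|_{L^2(K)^d}$ up to a constant depending on $\rho$), gives \eqref{NCpoly:eq.bound.JK.grad}.

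The main obstacle I anticipate is the bubble gradient/inverse-inequality estimate with a constant genuinely independent of the cell geometry: because $\phi_{K,\sigma}$ is only piecewise polynomial of degree ${\rm Card}(\mathcal E_\sigma)$ — which is not bounded in terms of $\rho$ alone in dimension $d\ge 3$ — one must be careful that the normalising constant $c_{K,\sigma}$ and the $L^\infty$/gradient norms do not degenerate. The cleanest route is to exploit $\rho$-regularity at the level of the simplicial subdivision $(K_i)$ from Definition \ref{def:rhoregpoly}: on each $K_i$ the relevant affine factors are comparable to $h_K$ on a ball of radius $\rho h_K$, which pins down $c_{K,\sigma}$ from below and the product from above on all of $D_{K,\sigma}$, making all constants depend only on $\rho$; once this is done, the rest is routine scaling and the standard equivalence of norms on the finite-dimensional space $W_K$.
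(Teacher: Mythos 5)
Your overall strategy coincides with the paper's: bound the bubble functions ($\|\phi_{K,\sigma}\|_{L^\infty(K)}\le C$ and $\|\nabla\phi_{K,\sigma}\|_{L^\infty(K)^d}\le Ch_K^{-1}$) through the normalisation \eqref{NCpoly:normalisation} and the $\rho$-regularity, control $\mathcal J_{\mathcal F_K}u$ via a trace/averaging estimate on the faces, and treat the $P_K$ term by combining its $L^2$-contractivity with an inverse inequality on its piecewise-polynomial range. The $L^2$ estimate \eqref{NCpoly:eq.bound.JK} goes through as you describe, and your concern about the polynomial degree of $\phi_{K,\sigma}$ for $d\ge 3$ is resolved exactly as you suggest (the number of edges of $\sigma$, hence the degree, is controlled by the bounded number of simplices in the $\rho$-regular subdivision).

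There is, however, a genuine circularity in your argument for the gradient bound \eqref{NCpoly:eq.bound.JK.grad}. Your trace estimate $|u_\sigma|\le Ch_K^{-d/2}(\|u\|_{L^2(K)}+h_K\|\nabla u\|_{L^2(K)^d})$ yields $\|\nabla\mathcal J_{\mathcal F_K}u\|_{L^2(K)^d}\le C(h_K^{-1}\|u\|_{L^2(K)}+\|\nabla u\|_{L^2(K)^d})$, and you propose to absorb the stray term $h_K^{-1}\|u\|_{L^2(K)}$ by a Poincar\'e inequality for the zero-face-average function $u-\mathcal J_{\mathcal F_K}u$. But that inequality controls $\|u-\mathcal J_{\mathcal F_K}u\|_{L^2(K)}$ by $Ch_K\|\nabla(u-\mathcal J_{\mathcal F_K}u)\|_{L^2(K)^d}$, whose right-hand side contains $\|\nabla\mathcal J_{\mathcal F_K}u\|_{L^2(K)^d}$ --- precisely the quantity carrying the stray term, so it is never eliminated. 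The paper's fix is to first record the polynomial invariance $\mathcal J_K q=q$ for all $q\in\mathbb{P}^1(K)$ (a one-line consequence of \eqref{NCpoly:def.phiKi} and \eqref{NCpoly:defJK.bdry}); in particular $\mathcal J_K 1=1$, hence $\nabla\mathcal J_K u=\nabla\mathcal J_K(u-\overline{u}_K)$ with $\overline{u}_K$ the mean value of $u$ on $K$. Running your chain of estimates on $u-\overline{u}_K$ instead of $u$, and using $|u_\sigma-\overline{u}_K|^2\le (Ch_K/|\sigma|)\,\|\nabla u\|_{L^2(K)^d}^2$ together with $\|u-\overline{u}_K\|_{L^2(K)}\le Ch_K\|\nabla u\|_{L^2(K)^d}$, removes the stray term and closes the argument. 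This invariance is worth stating explicitly in any case, since it is also what drives the approximation result (Theorem \ref{NCpoly:th.approx}) that follows.
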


\begin{proof}

In this proof, $C>0$ denotes a generic real number, that can change from one line to the next but depends only on $\rho$.

\textbf{Step 1}: \emph{Polynomial invariance of $\mathcal J_K$ and estimates on the basis functions.}

The definitions \eqref{NCpoly:def.phiKi} and \eqref{NCpoly:defJK.bdry} show that
$\phi_{K,i}=\psi_{K,i}-\mathcal J_{\mathcal F_K}\psi_{K,i}$ for all $i=0,\ldots,d$. Hence, $P_K(\psi_{K,i}-\mathcal J_{\mathcal F_K}\psi_{K,i})=P_K\phi_{K,i}=\phi_{K,i}$ and $\mathcal J_K \psi_{K,i}=\mathcal J_{\mathcal F_K}\psi_{K,i}+\phi_{K,i}=\psi_{K,i}$.
Since $\mathbb{P}^1(K)={\rm span}\{\psi_{K,i}\,:\,i=0,\ldots,d\}$ this establishes the following polynomial invariance of $\mathcal J_K$:
\begin{equation}\label{NCpoly:JK.poly}
\mathcal J_K q=q\quad\forall q\in\mathbb{P}^1(K).
\end{equation}

The definition \eqref{NCpoly:def.phiKsigma} and the $\rho$-regularity of $K$ imply that $\phi_{K,\sigma}\ge  c_{K,\sigma} Ch_\sigma^{n_\sigma}$ on a ball $B_\sigma$ in $\sigma$ of diameter $C h_\sigma$, where $h_\sigma$ is the diameter of $\sigma$ and $n_\sigma={\rm Card}(\mathcal F_{K\sigma,{\rm int}})$. Integrating this relation over $B_\sigma$, using \eqref{NCpoly:normalisation} and noticing that $|\sigma|\le C|B_\sigma|$, we infer $c_{K,\sigma}\le C h_\sigma^{-n_\sigma}$ and thus, since $h_K\le Ch_\sigma$ by $\rho$-regularity of $K$,
\begin{equation}\label{NCpoly:bound.phiKs}
|\phi_{K,\sigma}|\le C\quad\mbox{ on $K$.}
\end{equation}
The same definition \eqref{NCpoly:def.phiKsigma} also yields $|\nabla\phi_{K,\sigma}|\le c_{K,\sigma}C h_K^{n_\sigma-1}$ on $K$, and therefore
\begin{equation}\label{NCpoly:bound.nablaPhiKs}
|\nabla\phi_{K,\sigma}|\le Ch_K^{-1}\quad\mbox{ on $K$.}
\end{equation}

\textbf{Step 2}: \emph{Estimate on $\nabla\mathcal J_K u$.}

By \eqref{NCpoly:JK.poly}, $\mathcal J_K 1 = 1$ and thus $\nabla\mathcal J_K u=\nabla\mathcal J_K(u-\overline{u}_K)$, where
$\overline{u}_K=\frac{1}{|K|}\int_K u$, which implies
\begin{equation}\label{NCpoly:stab.JK}
\nabla\mathcal J_K u = \nabla \mathcal J_{\mathcal F_K}(u-\overline{u}_K)+\nabla P_K[(u-\overline{u}_K)-\mathcal J_{\mathcal F_K}(u-\overline{u}_K)].
\end{equation}
Let us first estimate $\nabla \mathcal J_{\mathcal F_K}(u-\overline{u}_K)$. By \cite[Est. (B.11)]{gdm} we have
\[
|u_\sigma-u_K|^2
\le 
\frac{Ch_K}{|\sigma|}\int_{K}|\nabla u|^2d\mathbi{x} \quad
\forall \sigma\in\mathcal F_K,
\]
from which we deduce
\[
|\nabla \mathcal J_{\mathcal F_K}(u-\overline{u}_K)|\le C\sum_{\sigma\in\mathcal F_K}\frac{h_K}{(|\sigma|h_K)^{1/2}}\|\nabla u\|_{L^2(K)^d}\,|\nabla\phi_{K,\sigma}|.
\]
The estimate \eqref{NCpoly:bound.nablaPhiKs} yields $\|\nabla\phi_{K,\sigma}\|_{L^2(K)^d}\le Ch_K^{-1}|K|^{1/2}$ and thus, since $|K|\le C|\sigma|h_K$ and ${\rm Card}(\mathcal F_K)\le C$ (consequence of Remark \ref{rem:rho.poly.nb.simplices}),
\begin{equation}
\|\nabla \mathcal J_{\mathcal F_K}(u-\overline{u}_K)\|_{L^2(K)^d}\le C \,\|\nabla u\|_{L^2(K)^d}.
\label{NCpoly:stab.JFK.grad}
\end{equation}
The same arguments with $\phi_{K,\sigma}$ instead of $\nabla\phi_{K,\sigma}$ and \eqref{NCpoly:bound.phiKs} instead of \eqref{NCpoly:bound.nablaPhiKs} yields
\begin{equation}
\|\mathcal J_{\mathcal F_K}(u-\overline{u}_K)\|_{L^2(K)}\le C h_K\,\|\nabla u\|_{L^2(K)^d}.
\label{NCpoly:stab.JFK}
\end{equation}

We now turn to the second term in the right-hand side of \eqref{NCpoly:stab.JK}. 
The range of $P_K$ is contained in a space of piecewise polynomials, with uniformly bounded degree, on a regular subdivision of $K$. The inverse inequality of \cite[Lemma 1.28 and Remark 1.33]{hho-book} therefore gives
\[
\|\nabla P_K[(u-\overline{u}_K)-\mathcal J_{\mathcal F_K}(u-\overline{u}_K)]\|_{L^2(K)^d}\le C h_K^{-1}\|P_K[(u-\overline{u}_K)-\mathcal J_{\mathcal F_K}(u-\overline{u}_K)]\|_{L^2(K)}.
\]
Since $P_K$ is an $L^2$-orthogonal projection, we infer
\begin{align}
\|\nabla P_K[(u-\overline{u}_K)-\mathcal J_{\mathcal F_K}(u-\overline{u}_K)]\|_{L^2(K)^d}\le{}& C h_K^{-1}\|(u-\overline{u}_K)-\mathcal J_{\mathcal F_K}(u-\overline{u}_K)\|_{L^2(K)}\nonumber\\
\le{}& Ch_K^{-1}\|u-\overline{u}_K\|_{L^2(K)} + Ch_K^{-1}\|\mathcal J_{\mathcal F_K}(u-\overline{u}_K)\|_{L^2(K)}\nonumber\\
\le{}& C \|\nabla u\|_{L^2(K)^d},
\label{eq:est.nabla.PK}
\end{align}
where we have used $\norm{u-u_K}{L^2(K)}\le Ch_K\norm{\nabla u}{L^2(K)^d}$ (see \cite[Est. (B.12)]{gdm}) and \eqref{NCpoly:stab.JFK} in the last line. Combined with \eqref{NCpoly:stab.JFK.grad} and \eqref{NCpoly:stab.JK}, this proves \eqref{NCpoly:eq.bound.JK.grad}.

\medskip

\textbf{Step 3}: \emph{Estimate on $\mathcal J_K u$.}

We use the triangle inequality together with $\mathcal J_K \overline{u}_K=\overline{u}_K$ (see \eqref{NCpoly:JK.poly}) to write
\begin{align*}
\|\mathcal J_K u\|_{L^2(K)}\le{}& \|\mathcal J_K (u-\overline{u}_K)\|_{L^2(K)}+\|\overline{u}_K\|_{L^2(K)}\\
\le{}& \|\mathcal J_{\mathcal F_K} (u-\overline{u}_K)\|_{L^2(K)} + \|P_K[(u-\overline{u}_K)-\mathcal J_{\mathcal F_K} (u-\overline{u}_K)]\|_{L^2(K)}+\|u\|_{L^2(K)}\\
\le{}& \|\mathcal J_{\mathcal F_K} (u-\overline{u}_K)\|_{L^2(K)} + \|(u-\overline{u}_K)-\mathcal J_{\mathcal F_K} (u-\overline{u}_K)\|_{L^2(K)}+\|u\|_{L^2(K)}\\
\le{}& Ch_K\|\nabla u\|_{L^2(K)^d} + \|u\|_{L^2(K)},
\end{align*}
where we have used the definition \ref{NCpoly:defJK} of $\mathcal J_K$ together with Jensen's inequality (to write $\|\overline{u}_K\|_{L^2(K)}\le \|u\|_{L^2(K)}$) in the second line, and the same arguments that led to \eqref{eq:est.nabla.PK} to conclude. The proof of \eqref{NCpoly:eq.bound.JK} is complete. \end{proof}

We can now complete the proof of Theorem \ref{NCpoly:th.approx}.

\begin{proof}[Theorem \ref{NCpoly:th.approx}]
As in the proof of Lemma \ref{NCpoly:lem.bound.interp}, $C$ denotes here a generic constant that can change from one line to the other but depends only on $\rho$. 
Let $K\in\mathcal{M}$ and denote by $q_1$ the $L^2$-orthogonal projection of $u_{|K}$ on $\mathbb{P}^1(K)$. 
By \cite[Theorem 1.45]{hho-book}, we have that
\begin{equation}\label{NCpoly:approx.q1}
\|u-q_1\|_{L^2(K)}+h_K\|\nabla(u-q_1)\|_{L^2(K)^d}\le Ch_K^2 |u|_{H^2(K)}.
\end{equation}
Using the polynomial invariance \eqref{NCpoly:JK.poly} and the triangle inequality, we write, for $s=0,1$,
\[
|u-\mathcal J_K u|_{H^s(K)}=|(u-q_1)-\mathcal J_K (u-q_1)|_{H^s(K)}\le
|u-q_1|_{H^s(K)}+|\mathcal J_K (u-q_1)|_{H^s(K)}.
\]
The boundedness properties \eqref{NCpoly:eq.bound.JK} and \eqref{NCpoly:eq.bound.JK.grad} together with the approximation property \eqref{NCpoly:approx.q1} then yield
\[
|u-\mathcal J_K u|_{H^s(K)}\le C(\|u-q_1\|_{L^2(K)}+h_K^{1-s}\|\nabla(u-q_1)\|_{L^2(K)^d})\le
Ch_K^{2-s}|u|_{H^2(K)}.
\]
Squaring, for each $s=0,1$, this inequality and summing over $K\in\mathcal{M}$ yields the estimate on each term in the left-hand side of \eqref{NCpoly:eq.approx.JD}.
\end{proof}

\subsection{Mass-lumping of the LEPNC method}\label{NCpoly:sec:ml}

As discussed in Section \ref{sec:ml.generic}, approximating non-linear models such as \eqref{eq:stefan.strong} requires the usage of mass-lumping, which necessitates to identify a basis of $V^{\textsc{\tiny LEPNC}}_{\mathfrak{T},0}$ such that the coefficients of $v\in V^{\textsc{\tiny LEPNC}}_{\mathfrak{T},0}$ on this basis represent approximate values of $v$ in some portions of $\Omega$. 

\begin{definition}[Mass-lumping operator for the LEPNC method]\label{def:ml.BPNC}
Let $\varpi\in[0,1]$ be a weight, representing the fraction of mass allocated to the faces. For each $K\in\mathcal M$, create a partition $((K_i)_{i=0,\ldots,d}, (K_\sigma)_{\sigma\in\mathcal F_K})$ of $K$ into $(d+1) + {\rm Card}(\mathcal F_K)$ sets, such that, for all $i=0,\ldots,d$ and $\sigma\in\mathcal F_K$,
\begin{align}
\label{NCpoly:ml.nodes}
\mathbi{s}_i\in\overline{K_i}\,,\quad& \overline{\mathbi{x}}_\sigma\in\overline{K_\sigma},\\
\label{NCpoly:meas.ml.regions}
|K_i|=(1-\varpi)\frac{|K|}{d+1}\,,\quad& 
|K_\sigma|=\varpi\frac{|K|}{{\rm Card}(\mathcal F_K)}.
\end{align}
The mass-lumping operator $\Pi^{\textsc{\tiny LEPNC}}_{\mathfrak{T}}:V^{\textsc{\tiny LEPNC}}_{\mathfrak{T},0}\to L^\infty(\Omega)$ is then defined by: for all $v\in V^{\textsc{\tiny LEPNC}}_{\mathfrak{T},0}$, \[
\Pi^{\textsc{\tiny LEPNC}}_{\mathfrak{T}}v = \sum_{K\in\mathcal{M}}\sum_{i=0}^d v_{K,i}\mathbf{1}_{K_i} + \sum_{\sigma\in\mathcal F_{\rm int}}v_\sigma \mathbf{1}_{K_\sigma},
\]
with $(v_\sigma)_{\sigma\in\mathcal F_{\rm int}}$ and $(v_{K,i})_{K\in\mathcal M,\,i=0,\cdots,d}$  given by \eqref{NCpoly:eq.lambda.sigma}-\eqref{NCpoly:eq.vertex}. 
\end{definition}

\begin{remark}[Shape of the partition of $K$]
Fig. \ref{fig:bpnc.ml} illustrates possible choices of regions $K_i$ and $K_\sigma$. 
In practice, due to the usage of quadrature rules for source terms (see Remark \ref{rem:shape.ml.regions}), the precise shapes of these region are irrelevant. 
Only their measures are required to implement the scheme \eqref{eq:stefan.nc}.
\end{remark}

\begin{figure}
\begin{center}
\input{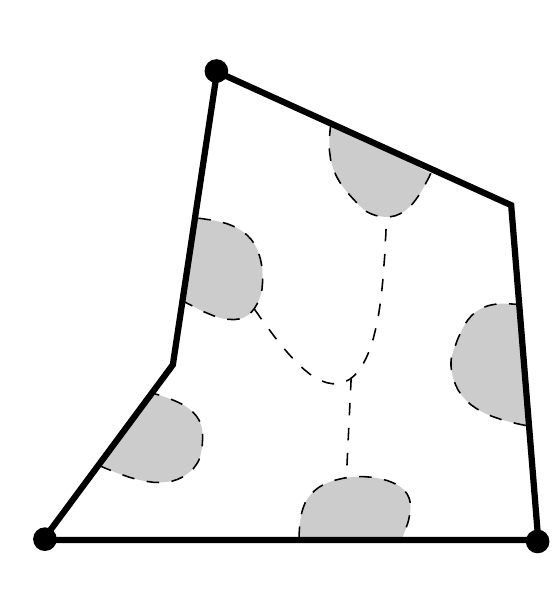_t}
\end{center}
\caption{Regions for mass-lumping of the LEPNC method in dimension $d=2$. Here, $\varpi$ is small and most of the weight has been put on the three chosen vertices $(\mathbi{s}_0,\mathbi{s}_1,\mathbi{s}_2)$.}\label{fig:bpnc.ml}
\end{figure}

The following lemma shows that the above designed mass-lumping technique preserves the approximation properties of the LEPNC, see Lemma \ref{GDM:ml.is.ok}.

\begin{lemma}[Estimate for the mass-lumping operator of the LEPNC]\label{lem:est.ml.BPNC}
Let $\mathfrak{T}$ be a $\rho$-regular polytopal mesh in the sense of Definition \ref{def:rhoregpoly}, and let $\Pi_{\mathfrak{T}}^{\textsc{\tiny LEPNC}}$ be given by Definition \ref{def:ml.BPNC}. Then, there exists $C>0$ depending only on $\rho$ and $d$ such that
\[
\norm{v-\Pi_{\mathfrak{T}}^{\textsc{\tiny LEPNC}}v}{L^2(\Omega)}\le C h_{\mathcal M}\norm{\nabla_{\mathcal M}v}{L^2(\Omega)^d}\quad\forall v\in V^{\textsc{\tiny LEPNC}}_{\mathfrak{T}}.
\]
\end{lemma}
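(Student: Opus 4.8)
The plan is to work cell by cell, since both $v-\Pi_{\mathfrak{T}}^{\textsc{\tiny LEPNC}}v$ and $\nabla_{\mathcal M}v$ are defined piecewise, and then square and sum over $K\in\mathcal M$. So fix $K\in\mathcal M$ and write, using the decomposition \eqref{NCpoly:dec.v} restricted to $K$,
\[
v_{|K}=\sum_{i=0}^d v_{K,i}\phi_{K,i}+\sum_{\sigma\in\mathcal F_K}v_\sigma\phi_{K,\sigma},\qquad
(\Pi_{\mathfrak{T}}^{\textsc{\tiny LEPNC}}v)_{|K}=\sum_{i=0}^d v_{K,i}\mathbf{1}_{K_i}+\sum_{\sigma\in\mathcal F_K}v_\sigma\mathbf{1}_{K_\sigma}.
\]
(One must be a little careful with boundary faces, where $v_\sigma=0$ by \eqref{ncfe:eq:avnull}, so the face-sums effectively range over $\mathcal F_K\cap\mathcal F_{\rm int}$ in both expressions; this causes no difficulty.) The key observation, exactly as in the analysis of mass-lumping for other methods, is that $\mathbb{P}^0(K)\subset\mathbb{P}^1(K)\subset V^{\textsc{\tiny LEPNC}}_K$ and that $\Pi_{\mathfrak{T}}^{\textsc{\tiny LEPNC}}$ reproduces constants on $K$: if $v_{|K}\equiv c$ then all nodal values $v_{K,i}=c$, all face averages $v_\sigma=c$, and since $((K_i),(K_\sigma))$ partitions $K$ we get $(\Pi_{\mathfrak{T}}^{\textsc{\tiny LEPNC}}v)_{|K}=c$ as well. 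Hence we may replace $v_{|K}$ by $v_{|K}-\overline v_K$ (with $\overline v_K=\frac{1}{|K|}\int_K v$) in estimating $\|v-\Pi_{\mathfrak{T}}^{\textsc{\tiny LEPNC}}v\|_{L^2(K)}$, which will let us bound everything by $\|\nabla v\|_{L^2(K)^d}$ rather than by $\|v\|_{L^2(K)}$.

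The main step is then to bound the coefficients and the building blocks. For the coefficients: by \cite[Est.\ (B.11)]{gdm}, for every $\sigma\in\mathcal F_K$ one has $|v_\sigma-\overline v_K|\le |v_\sigma-v_K|+|v_K-\overline v_K|\le C h_K^{1/2}|\sigma|^{-1/2}\|\nabla v\|_{L^2(K)^d}$ (using also $\|v-v_K\|_{L^2(K)}\le Ch_K\|\nabla v\|_{L^2(K)^d}$, \cite[Est.\ (B.12)]{gdm}, to compare the two averages $v_K$ and $\overline v_K$), and since $\rho$-regularity gives $|K|\le C|\sigma|h_K$ this reads $|v_\sigma-\overline v_K|\le C|K|^{-1/2}h_K\|\nabla v\|_{L^2(K)^d}$. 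For the nodal values $v_{K,i}=v_{|K}(\mathbf s_i)$ one argues similarly: $\mathbf s_i$ lies in one of the simplices $K_j$ of the $\rho$-regular subdivision, on which a trace/Poincaré-type inequality (or a Bramble--Hilbert argument on the reference simplex scaled by $h_K$) gives $|v(\mathbf s_i)-\overline{v}_{K_j}|\le C h_K^{1-d/2}\|\nabla v\|_{L^2(K_j)^d}$ — here one needs that point values of $H^1$ functions do not make sense for $d\ge2$, so strictly one should apply this argument at the level of the alternate interpolator, or restrict to the natural assumption that $v$ is (piecewise) smooth enough; in the present LEPNC setting $v_{|K}$ is piecewise polynomial hence continuous on $\overline K$, so point values are well defined. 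Chaining through the neighbouring simplices (their number being bounded by $1/\rho^d$) and using $|K|\ge C h_K^d$ yields $|v_{K,i}-\overline v_K|\le C|K|^{-1/2}h_K\|\nabla v\|_{L^2(K)^d}$, the same bound as for the faces.

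With these coefficient bounds in hand, the estimate is essentially mechanical. Write
\[
(v-\Pi_{\mathfrak{T}}^{\textsc{\tiny LEPNC}}v)_{|K}
=\sum_{i=0}^d v_{K,i}(\phi_{K,i}-\mathbf 1_{K_i})+\sum_{\sigma\in\mathcal F_K}v_\sigma(\phi_{K,\sigma}-\mathbf 1_{K_\sigma}),
\]
replace each $v_{K,i}$ and $v_\sigma$ by its deviation from $\overline v_K$ (legitimate because $\sum_i(\phi_{K,i}-\mathbf 1_{K_i})+\sum_\sigma(\phi_{K,\sigma}-\mathbf 1_{K_\sigma})=0$ on $K$ by the constant-reproduction property just noted, since $\phi_{K,i}=\psi_{K,i}-\sum_\sigma\overline\psi_{K,i,\sigma}\phi_{K,\sigma}$ and $\sum_i\psi_{K,i}\equiv1$), and then use $|\phi_{K,\sigma}|\le C$ on $K$ (this is \eqref{NCpoly:bound.phiKs}), $|\phi_{K,i}|\le|\psi_{K,i}|+\sum_\sigma|\overline\psi_{K,i,\sigma}||\phi_{K,\sigma}|\le C$ (the nodal $\mathbb{P}^1$ functions are bounded by $1$ on a $\rho$-regular simplex decomposition — again using the choice of the $\mathbf s_i$ as vertices maximising the simplex volume), $|\mathbf 1_{K_i}|,|\mathbf 1_{K_\sigma}|\le1$, and $|K_i|,|K_\sigma|\le|K|$. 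Each of the $(d+1)+{\rm Card}(\mathcal F_K)\le C$ terms therefore contributes at most $C|v_{K,i}-\overline v_K|\,|K|^{1/2}\le Ch_K\|\nabla v\|_{L^2(K)^d}$ (resp.\ the analogous face term) in $L^2(K)$-norm. Summing and using $h_K\le h_{\mathcal M}$ gives $\|v-\Pi_{\mathfrak{T}}^{\textsc{\tiny LEPNC}}v\|_{L^2(K)}\le C h_{\mathcal M}\|\nabla v\|_{L^2(K)^d}$; squaring and summing over $K\in\mathcal M$ finishes the proof. The one point requiring genuine care — and which I expect to be the main obstacle — is the point-value estimate for $v_{K,i}-\overline v_K$: one must exploit that the relevant functions are piecewise polynomials of uniformly bounded degree on a $\rho$-regular sub-triangulation so that an inverse/trace inequality bounds the point value by an $L^2$-mean scaled appropriately by powers of $h_K$, and keep track of the $h_K^{-d/2}$ factors so that they cancel against $|K|^{1/2}\sim h_K^{d/2}$; everything else is a routine combination of the already-established estimates \eqref{NCpoly:bound.phiKs}, the boundedness of the $\psi_{K,i}$, and \cite[Est.\ (B.11)--(B.12)]{gdm}.
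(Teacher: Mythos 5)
Your proof is correct, and it reaches the same estimate as the paper but by a noticeably different packaging. The paper's argument is more direct: since $v_{|K}$ is Lipschitz on $K$, the mean value theorem immediately gives the \emph{pointwise} bounds $|v_{K,i}-v|\le Ch_K\norm{\nabla v_{|K}}{L^\infty(K)^d}$ and $|v_\sigma-v|\le Ch_K\norm{\nabla v_{|K}}{L^\infty(K)^d}$ on $K$; writing $v_{|K}=\sum_i v\mathbf{1}_{K_i}+\sum_\sigma v\mathbf{1}_{K_\sigma}$ and subtracting the definition of $\Pi^{\textsc{\tiny LEPNC}}_{\mathfrak T}v$ then bounds $|v-\Pi^{\textsc{\tiny LEPNC}}_{\mathfrak T}v|$ by $Ch_K\norm{\nabla v_{|K}}{L^\infty(K)^d}$ everywhere on $K$, and a single application of the inverse Lebesgue inequality to the piecewise polynomial $\nabla v_{|K}$ converts $\norm{\nabla v_{|K}}{L^\infty(K)^d}$ into $C|K|^{-1/2}\norm{\nabla v_{|K}}{L^2(K)^d}$. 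You instead expand both $v_{|K}$ and its mass-lumped version on the basis, use the reproduction of constants (your identity $\sum_i\phi_{K,i}+\sum_\sigma\phi_{K,\sigma}=1$, which indeed follows from $\sum_i\psi_{K,i}=1$ and \eqref{NCpoly:def.phiKi}) to subtract $\overline v_K$ from every coefficient, and then combine coefficient bounds (via \cite[Est.\ (B.11)--(B.12)]{gdm} and an inverse inequality) with $L^\infty$ bounds on $\phi_{K,\sigma}$, $\phi_{K,i}$ and the indicators. This costs you a few extra ingredients that the paper does not need here --- the bound $|\psi_{K,i}|\le 1$ on $K$ coming from the determinant-maximising choice of $(\mathbf{s}_0,\dots,\mathbf{s}_d)$, the bound \eqref{NCpoly:bound.phiKs}, and estimate (B.11) --- but it buys a proof that never invokes the Lipschitz constant of $v_{|K}$ itself, only coefficient deviations measured against $\norm{\nabla v}{L^2(K)^d}$. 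Two small remarks: your detour for $|v_{K,i}-\overline v_K|$ (chaining through neighbouring simplices) can be shortcut by applying the inverse inequality directly to the piecewise polynomial $v_{|K}-\overline v_K$ on $K$, giving $|v_{|K}(\mathbf{s}_i)-\overline v_K|\le C|K|^{-1/2}\norm{v-\overline v_K}{L^2(K)}\le C|K|^{-1/2}h_K\norm{\nabla v}{L^2(K)^d}$, exactly in the spirit of the paper's single inverse-inequality step; and your worry about point values is moot for exactly the reason you state (elements of $V^{\textsc{\tiny LEPNC}}_K$ are continuous piecewise polynomials), which is also the implicit justification in the paper's proof.
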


\begin{proof}
In this proof, $C$ is a real number that may vary, but depends only on $\rho$ and $d$.
Let $v\in V^{\textsc{\tiny LEPNC}}_{\mathfrak{T}}$. For all $K\in\mathcal M$, the function $v_{|K}$ is Lipschitz-continuous on $K$ and the $\rho$-regularity of $K$ together with the mean value theorem gives, for all $i=0,\ldots,d$ and $\sigma\in\mathcal F_K$,
\[
|v_{K,i}-v|=|v_{|K}(\mathbi{s}_i)-v|\le Ch_K\norm{\nabla v_{|K}}{L^\infty(K)^d}\mbox{ on $K$}
\]
and
\[
|v_\sigma-v| \le Ch_K\norm{\nabla v_{|K}}{L^\infty(K)^d}\mbox{ on $K$}.
\]
Writing $v_{|K}=\sum_{i=0}^d v\mathbf{1}_{K_i} + \sum_{\sigma\in\mathcal F_{K}}v\mathbf{1}_{K_\sigma}$ and subtracting the definition of $\Pi^{\textsc{\tiny LEPNC}}_{\mathfrak{T}}v$ we infer
\[
|v_{|K}-(\Pi^{\textsc{\tiny LEPNC}}_{\mathfrak{T}}v)_{|K}|\le \sum_{i=0}^d Ch_K\norm{\nabla v_{|K}}{L^\infty(K)^d}\mathbf{1}_{K_i} + \sum_{\sigma\in\mathcal F_{K}}Ch_K\norm{\nabla v_{|K}}{L^\infty(K)^d}\mathbf{1}_{K_\sigma}.
\]
Since $\nabla v_{|K}$ is piecewise polynomial on a regular subdivision of $K$, with a degree bounded above by a positive real number depending only on $\rho$, the inverse Lebesgue inequalities of \cite[Lemma 1.25 and Remark 1.33]{hho-book} yield $\norm{\nabla v_{|K}}{L^\infty(K)^d}\le C |K|^{-\frac12}\norm{\nabla v_{|K}}{L^2(K)^d}$. 
Plugging this estimate into the above relation and using $\sum_{i=0}^d\mathbf{1}_{K_i}+\sum_{\sigma\in\mathcal F_K}\mathbf{1}_{K_\sigma}=1$ on $K$, we infer
\[
|v_{|K}-(\Pi^{\textsc{\tiny LEPNC}}_{\mathfrak{T}}v)_{|K}|\le Ch_K |K|^{-\frac12}\norm{\nabla v_{|K}}{L^2(K)^d}.
\]
The proof is complete by taking the $L^2(K)$-norm of this estimate, squaring, summing over $K\in\mathcal M$ and taking the square root. \end{proof}

\subsection{Convergence results}\label{sec:cvgcelepnc}

Together with the above analysis of the LEPNC properties, the general nonconforming framework of Section \ref{sec:nc.approx.lin} yields the following results. 
We first give an error estimate for the LENPC approximation of the linear problem \eqref{eq:linear}.

\begin{theorem}[Error estimates for the LEPNC approximation]\label{th:error.est.lepnc}
We assume that the solution $\bar u$ of \eqref{ellgenf} and the data $\Lambda$ and $\bm{F}$ in Hypotheses \eqref{hypglin} are such that $\Lambda\nabla\bar u+\bm{F} \in H^1(\Omega)^d$ and $\bar u\in H^2(\Omega)$.
Let $\mathfrak{T}$ be a  $\rho$-regular polytopal mesh in the sense of Definition \ref{def:rhoregpoly}. Let $u$ be the solution of the non-conforming scheme \eqref{eq:nc.lin}, letting $V_{\mathfrak{T},0} = V^{\textsc{\tiny LEPNC}}_{\mathfrak{T},0}$ defined by \eqref{def:V.T}. Then, there exists $C>0$ depending only on $\Omega$, $\rho$ and $\underline{\lambda},\overline{\lambda}$ in \eqref{hyplambda} such that
\begin{equation}\label{eq:errorest.LEPNC}
\norm{\bar u-u}{L^2(\Omega)}+\norm{\nabla \bar u-\nabla_{\mathcal M}u}{L^2(\Omega)^d}
\le C h_{\mathcal M} (\norm{\Lambda\nabla\bar u+\bm{F}}{H^1(\Omega)} +  |u|_{H^2(\Omega)}),
\end{equation}
where $|{\cdot}|_{H^2(\Omega)}$ denotes the $H^2(\Omega)$-seminorm.
\end{theorem}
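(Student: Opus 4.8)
The plan is to combine the abstract error estimate of Theorem~\ref{th:error.est} with the concrete approximation property of the LEPNC space established in Theorem~\ref{NCpoly:th.approx}. Indeed, Theorem~\ref{th:error.est} applies verbatim to the choice $V_{\mathfrak{T},0}=V^{\textsc{\tiny LEPNC}}_{\mathfrak{T},0}$, since the latter is by construction (see \eqref{def:V.T}) a finite-dimensional subspace of $H^1_{\mathfrak{T},0}$. It yields
\[
\norm{\bar u-u}{L^2(\Omega)}+\norm{\nabla \bar u-\nabla_{\mathcal M}u}{L^2(\Omega)^d}
\le C h_{\mathcal M} \norm{\Lambda\nabla\bar u+\bm{F}}{H^1(\Omega)^d} + C \min_{v\in V^{\textsc{\tiny LEPNC}}_{\mathfrak{T},0}}\norm{\bar u-v}{H^1_{\mathfrak T,0}},
\]
with $C$ depending only on $\Omega$, $\underline{\lambda},\overline{\lambda}$ and (increasingly) on $\gamma_{\mathfrak T}$. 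The first step is therefore to invoke this estimate.

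Next, I would bound the minimum over the LEPNC space from above by a particular competitor: since $\bar u\in H^1_0(\Omega)\cap H^2(\Omega)$, the interpolator $\mathcal J_{\mathfrak{T}}$ of \eqref{NCpoly:defJK} is well-defined, $\mathcal J_{\mathfrak{T}}\bar u\in V^{\textsc{\tiny LEPNC}}_{\mathfrak{T},0}$, and Theorem~\ref{NCpoly:th.approx} gives
\[
\min_{v\in V^{\textsc{\tiny LEPNC}}_{\mathfrak{T},0}}\norm{\bar u-v}{H^1_{\mathfrak T,0}}
\le \norm{\bar u-\mathcal J_{\mathfrak{T}}\bar u}{H^1_{\mathfrak T,0}}
= \norm{\nabla_{\mathcal M}(\bar u-\mathcal J_{\mathfrak{T}}\bar u)}{L^2(\Omega)^d}
\le C h_{\mathcal M}|\bar u|_{H^2(\Omega)},
\]
with $C$ depending only on $\rho$ (recalling that the $H^1_{\mathfrak{T},0}$-norm is precisely the broken-gradient $L^2$-norm, per Definition~\ref{chapncfe:def:ncspace}). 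Substituting this into the previous display, and using Remark~\ref{rem:rho.poly.nb.simplices} to replace the dependence on $\gamma_{\mathfrak T}$ by a dependence on $\rho$ alone, produces \eqref{eq:errorest.LEPNC} (with $|u|_{H^2(\Omega)}$ replaced by $|\bar u|_{H^2(\Omega)}$ — and since the statement's right-hand side writes $|u|_{H^2(\Omega)}$, one notes that under the present regularity hypotheses $u$ there is understood to be $\bar u$, or one simply quotes $|\bar u|_{H^2(\Omega)}$).

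There is essentially no hard part here: the theorem is a corollary obtained by chaining two previously established results. The only points requiring a modicum of care are bookkeeping ones: checking that the regularity hypothesis $\bar u\in H^2(\Omega)$ (together with $\bar u\in H^1_0(\Omega)$, which holds by \eqref{ellgenf}) is exactly what is needed to apply Theorem~\ref{NCpoly:th.approx}; verifying that the constant in Theorem~\ref{th:error.est}, which a priori depends on $\gamma_{\mathfrak T}$, can be absorbed into a constant depending only on $\rho$ because $\rho$-regularity controls $\gamma_{\mathfrak T}$ (Remark~\ref{rem:rho.poly.nb.simplices}); and noting that the minimum in \eqref{eq:errorest.lin} is dominated by the value at the single competitor $\mathcal J_{\mathfrak{T}}\bar u$, which is legitimate precisely because that competitor lies in $V^{\textsc{\tiny LEPNC}}_{\mathfrak{T},0}$.
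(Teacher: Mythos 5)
Your proposal is correct and follows exactly the paper's argument, which simply cites Theorem \ref{th:error.est} and Theorem \ref{NCpoly:th.approx}; you have merely made explicit the intermediate steps (using $\mathcal J_{\mathfrak{T}}\bar u$ as the competitor in the minimum, and invoking Remark \ref{rem:rho.poly.nb.simplices} to trade the $\gamma_{\mathfrak T}$-dependence for a $\rho$-dependence). Your observation that the $|u|_{H^2(\Omega)}$ in \eqref{eq:errorest.LEPNC} should read $|\bar u|_{H^2(\Omega)}$ is also well taken.
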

\begin{proof}
	The result is an immediate consequence of Theorem \ref{th:error.est} and Theorem \ref{NCpoly:th.approx}. 
\end{proof}

Turning to the nonlinear problem \eqref{eq:stefan.weak}, the following theorem states the convergence of the LEPNC method. 

\begin{theorem}[Convergence of the LEPNC method for the Stefan problem]\label{th:cv.stefan.LEPNC}~\\
Let $\rho>0$ be a fixed number, and let $(\mathfrak{T}_m)_{m\in\mathbb{N}}$ be a sequence of $\rho$-regular polytopal mesh polytopal meshes, in the sense of Definition \ref{def:rhoregpoly}, such that $h_{\mathcal M_m}\to 0$ as $m\to\infty$. 

Then, for all $m\in\mathbb{N}$, letting
$V_{\mathfrak{T}_m,0} = V^{\textsc{\tiny LEPNC}}_{\mathfrak{T}_m,0}$  defined by \eqref{def:V.T} and $\Pi_{\mathfrak T_m}=\Pi_{\mathfrak T_m}^{\textsc{\tiny LEPNC}}$ from Definition \ref{def:ml.BPNC}, there exists $u_m$ solution of \eqref{eq:stefan.nc} and, as $m\to\infty$, $\Pi_{\mathfrak{T}_m}^{\textsc{\tiny LEPNC}}\zeta(u_m)\to \zeta(\bar u)$ strongly in $L^2(\Omega)$, $\nabla_{\mathcal M_m}\zeta(u_m)\to\nabla\zeta(\bar u)$ strongly in $L^2(\Omega)^d$, and $\Pi_{\mathfrak{T}_m}^{\textsc{\tiny LEPNC}}u_m\to\bar u$ weakly in $L^2(\Omega)$, where $\bar u$ is a solution to \eqref{eq:stefan.weak}.
\end{theorem}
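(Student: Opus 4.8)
The plan is to verify that the hypotheses of the generic convergence result for polytopal non-conforming methods applied to the Stefan problem --- namely Theorem \ref{th:cv.stefan} --- are satisfied when we specialise $V_{\mathfrak{T}_m,0}=V^{\textsc{\tiny LEPNC}}_{\mathfrak{T}_m,0}$ and $\Pi_{\mathfrak{T}_m}=\Pi_{\mathfrak{T}_m}^{\textsc{\tiny LEPNC}}$. Thus the entire proof reduces to checking two things: the uniform mesh-regularity bound $\gamma_{\mathfrak{T}_m}\le\gamma$ for some fixed $\gamma$, and the two limiting conditions \eqref{eq:consistency.VT} (consistency of the spaces) and \eqref{eq:Pi.comparison} (compatibility of the mass-lumping operator with the broken gradient norm).

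First I would record that, since each $\mathfrak{T}_m$ is $\rho$-regular with $\rho$ fixed, Remark \ref{rem:rho.poly.nb.simplices} gives a bound $\gamma_{\mathfrak{T}_m}\le\gamma(\rho)$ independent of $m$; so the hypothesis of Theorem \ref{th:cv.stefan} on $\gamma_{\mathfrak{T}_m}$ holds. Next I would establish \eqref{eq:consistency.VT}. For $\phi\in H^1_0(\Omega)$ fixed and $\varepsilon>0$, density of $H^1_0(\Omega)\cap H^2(\Omega)$ (equivalently of $C^\infty_c(\Omega)$) in $H^1_0(\Omega)$ gives $\psi\in H^1_0(\Omega)\cap H^2(\Omega)$ with $\norm{\phi-\psi}{H^1_0(\Omega)}\le\varepsilon$; note that for $w\in H^1_0(\Omega)$ the broken-gradient norm $\norm{w}{H^1_{\mathfrak{T},0}}$ coincides with $\norm{\nabla w}{L^2(\Omega)^d}$, so this $H^1_0$-distance controls the $H^1_{\mathfrak{T}_m,0}$-distance. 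Then Theorem \ref{NCpoly:th.approx} applied to $\psi$ yields $\norm{\nabla_{\mathcal M_m}(\psi-\mathcal J_{\mathfrak{T}_m}\psi)}{L^2(\Omega)^d}\le Ch_{\mathcal M_m}|\psi|_{H^2(\Omega)}\to 0$. By the triangle inequality $\min_{v\in V_{\mathfrak{T}_m,0}}\norm{\phi-v}{H^1_{\mathfrak{T}_m,0}}\le\varepsilon + Ch_{\mathcal M_m}|\psi|_{H^2(\Omega)}$; letting $m\to\infty$ and then $\varepsilon\to 0$ gives \eqref{eq:consistency.VT}. Finally, \eqref{eq:Pi.comparison} is exactly the content of Lemma \ref{lem:est.ml.BPNC}: it provides $\norm{v-\Pi^{\textsc{\tiny LEPNC}}_{\mathfrak{T}_m}v}{L^2(\Omega)}\le Ch_{\mathcal M_m}\norm{\nabla_{\mathcal M_m}v}{L^2(\Omega)^d}$ with $C=C(\rho,d)$, so the ratio in \eqref{eq:Pi.comparison} is bounded by $Ch_{\mathcal M_m}\to 0$.

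With all the hypotheses of Theorem \ref{th:cv.stefan} verified for the sequence $(\mathfrak{T}_m,V^{\textsc{\tiny LEPNC}}_{\mathfrak{T}_m,0},\Pi^{\textsc{\tiny LEPNC}}_{\mathfrak{T}_m})_{m\in\mathbb{N}}$, that theorem directly yields: existence of $u_m\in V^{\textsc{\tiny LEPNC}}_{\mathfrak{T}_m,0}$ solving \eqref{eq:stefan.nc}, the strong $L^2$ convergence $\Pi^{\textsc{\tiny LEPNC}}_{\mathfrak{T}_m}\zeta(u_m)\to\zeta(\bar u)$, the strong $L^2$ convergence $\nabla_{\mathcal M_m}\zeta(u_m)\to\nabla\zeta(\bar u)$, and the weak $L^2$ convergence $\Pi^{\textsc{\tiny LEPNC}}_{\mathfrak{T}_m}u_m\to\bar u$, for some solution $\bar u$ of \eqref{eq:stefan.weak}. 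This is precisely the claimed statement, so the proof is complete.

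I do not expect any genuine obstacle here, as the argument is a bookkeeping reduction to results already proven; the only point requiring a little care is matching the \emph{norm conventions} --- checking that, on the conforming subspace $H^1_0(\Omega)\subset H^1_{\mathfrak{T}_m,0}$, the broken-gradient seminorm $\norm{\cdot}{H^1_{\mathfrak{T}_m,0}}$ agrees with the usual $H^1_0$ seminorm, so that the density argument for \eqref{eq:consistency.VT} transfers correctly, and confirming that Lemma \ref{lem:est.ml.BPNC} is stated with a constant depending only on $\rho$ and $d$ (hence uniform along the sequence), which is exactly what \eqref{eq:Pi.comparison} demands.
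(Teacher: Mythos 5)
Your proof is correct and follows exactly the paper's route: apply Theorem \ref{th:cv.stefan}, with the uniform bound on $\gamma_{\mathfrak{T}_m}$ from $\rho$-regularity (Remark \ref{rem:rho.poly.nb.simplices}), property \eqref{eq:consistency.VT} from Theorem \ref{NCpoly:th.approx} combined with the density of $H^2(\Omega)\cap H^1_0(\Omega)$ in $H^1_0(\Omega)$, and property \eqref{eq:Pi.comparison} from Lemma \ref{lem:est.ml.BPNC}. Your write-up merely spells out the density/triangle-inequality step that the paper leaves implicit.
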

\begin{proof}
We apply Theorem \ref{th:cv.stefan}. 
Property \eqref{eq:consistency.VT} is a consequence of Theorem \ref{NCpoly:th.approx}, and of the density of $H^2(\Omega)\cap H^1_0(\Omega)$ in $H^1_0(\Omega)$. 
Property \eqref{eq:Pi.comparison} is proven by Lemma \ref{lem:est.ml.BPNC}.
\end{proof}

\subsection{Numerical tests}\label{sec:numer}

We present here some numerical results obtained by the LEPNC method on the linear single-phase incompressible flow \eqref{eq:linear} and on the Stefan/porous medium equation problem \eqref{eq:stefan.strong}, on $\Omega=(0,1)^2$ and with the diffusion tensor $\Lambda={\rm Id}$. 
The schemes we consider are therefore \eqref{eq:nc.lin} and \eqref{eq:stefan.nc} with the space $V^{\textsc{\tiny LEPNC}}_{\mathfrak{T},0}$ and the mass-lumping operator $\Pi^{\textsc{\tiny LEPNC}}_{\mathfrak T}$. 
The tests below were run using the LEPNC implementation available in the HArDCore2D library \cite{HArDCore2D}. 
We note that some of the tests here involve non-homogeneous Dirichlet boundary conditions; adapting the LEPNC scheme to this case is straightforward, and done as for standard non-conforming $\mathbb{P}^1$ finite elements. 
We also refer the interested reader to \cite{CDGGBP20} for a numerical assessment of the LEPNC (and comparison with other methods) on the transient porous medium equation.

Let us first make some remarks relative to the practical implementation of these LEPNC schemes.

\begin{remark}[Choice of implementation unknown for the Stefan model]
Owing to Lemma \ref{NCpoly:lem.basis}, the unknowns for the implementation of the LEPNC represent function values $X_{K,i}$ at the chosen vertices $\mathbi{s}_i$ inside each cell $K\in\mathcal M$, and function values $X_\sigma$ at the center of mass of each face $\sigma\in\mathcal F$ (such values are order 2 approximations of the averages appearing in \eqref{NCpoly:eq.lambda.sigma}). 
When considering the scheme \eqref{eq:stefan.nc} for the Stefan problem and because of the plateaux of $\zeta$, however, these values may not be values of $u$, but sometimes of $\zeta(u)$. 
Specifically, if $\varpi=0$, then the face values of the unknowns $u$ do not appear in the mass-matrix in each Newton iteration on \eqref{eq:stefan.nc}; if we were to use these face values as unknown $X_\sigma$ for the implementation, they would be multiplied in the stiffness matrix by $\zeta'(X_\sigma^{k-1})$, where $X_\sigma^{k-1}$ is the face value at the previous Newton iteration; this factor $\zeta'(X_\sigma^{k-1})$ could vanish, leading to a zero line in the complete linear system. For this reason, when $\varpi=0$, each $X_\sigma$ should represent the value on $\sigma$ of $\zeta(u)$, not $u$; this way, when writing Newton iterations, no linearisation is performed on this unknown in the stiffness matrix, which ensures that it remains invertible. 
For the same reason, if $\varpi=1$, each unknown $X_{K,i}$ should represent values at $\mathbi{s}_i$ of $\zeta(u)$, not $u$. 
We refer the reader to \cite[Remark 3.1]{DE19} for more on this topic.
\end{remark}

\begin{remark}[Static condensation of cell-based degrees of freedom]\label{rem:statcond}
For each $K\in\mathcal{M}$, the basis functions $\{\widetilde{\phi}_{K,i}\,:\,i=0,\ldots,d\}$ have support in $K$. 
In the linear systems to be solved (at each iteration of the Newton algorithm in the case of non-linear problems), the stencil of their associated unknowns therefore only contains the unknowns of the other basis functions related to $K$, and of the basis functions related to the faces of $K$. 
A static condensation process can thus be applied, exactly as in Hybrid High-Order methods (see \cite[Appendix B.3.2]{hho-book}), to eliminate the cell-based unknowns. 
The resulting globally coupled linear system then only involves face-based unknowns, and two faces are in a stencil of this matrix only if they share a cell.
\end{remark}

\begin{remark}[Alternate construction of the basis functions]
Instead of using the no\-dal basis functions $(\psi_{K,i})_{i=0,\ldots,d}$ in \eqref{NCpoly:def.phiKi}, one can instead take the scaled and translated monomial basis functions: $\psi_{K,0}=1$ and $\psi_{K,i}(\mathbi{x})=\frac{x_i-\overline{x}_{K,i}}{h_K}$, where $x_i$ is the $i$-th coordinate of $\mathbi{x}$ and $x_{K,i}$ is the $i$-th coordinate of the centre of mass of $K$. The obtained basis $(\phi_{K,i})_{i=0,\ldots,d}$ can afterwards be transformed by linear combinations into a nodal basis (ensuring that \eqref{NCpoly:dec.v}--\eqref{NCpoly:eq.vertex} holds). 
This implementation is the choice made in the HArDCore library.
\end{remark}

When an analytical solution is available, we present error estimates in the following relative norms:
\[
E_{L^2}:=\frac{\norm{u-\mathcal I_{\mathfrak T}\bar u}{L^2(\Omega)}}{\norm{\mathcal I_{\mathfrak T}\bar u}{L^2(\Omega)}}
\quad\mbox{ and }\quad
E_{H^1}:=\frac{\norm{\nabla_{\mathcal M}(u-{\mathcal I}_{\mathfrak T}\bar u)}{L^2(\Omega)^d}}{\norm{\nabla_{\mathcal M}{\mathcal I}_{\mathfrak T}\bar u}{L^2(\Omega)^d}}
\]
for the linear model, and
\[
E_{L^2,{\rm ml}}:=\frac{\norm{\Pi^{\textsc{\tiny LEPNC}}_{\mathfrak T}(u-\mathcal I_{\mathfrak T}\bar u)}{L^2(\Omega)}}{\norm{\Pi^{\textsc{\tiny LEPNC}}_{\mathfrak T}\mathcal I_{\mathfrak T}\bar u}{L^2(\Omega)}}
\quad\mbox{ and }\quad
E_{H^1,\zeta}:=\frac{\norm{\nabla_{\mathcal M}(\zeta(u)-{\mathcal I}_{\mathfrak T}\zeta(\bar u))}{L^2(\Omega)^d}}{\norm{\nabla_{\mathcal M}{\mathcal I}_{\mathfrak T}\zeta(\bar u)}{L^2(\Omega)^d}}
\]
for the non-linear model; here $\bar u$ is the exact analytical solution to \eqref{eq:stefan.strong}, $u$ is the solution to the LEPNC scheme, ${\mathcal I}_{\mathfrak T}$ is the interpolator defined by \eqref{NCpoly:def:ID}, and $\Pi^{\textsc{\tiny LEPNC}}_{\mathfrak T}$ is the mass-lumping operator given by Definition \ref{def:ml.BPNC}.

The tests have been run using three families of meshes, an example of each is represented in Fig. \ref{fig:ex.meshes}: (mostly) hexagonal meshes, Kershaw meshes and locally refined Cartesian meshes. The last two are taken from the FVCA5 Benchmark \cite{2Dbench}. 
In all the tests we have chosen a mass-lumping weight $\varpi$ of 0 on the edges; tests (not reported here) with other weights show similar results, except that the Newton iterations converge sometimes more slowly when mass is allocated to the edges.

\begin{figure}
\begin{center}
\begin{tabular}{ccc}
\includegraphics[width=0.3\linewidth]{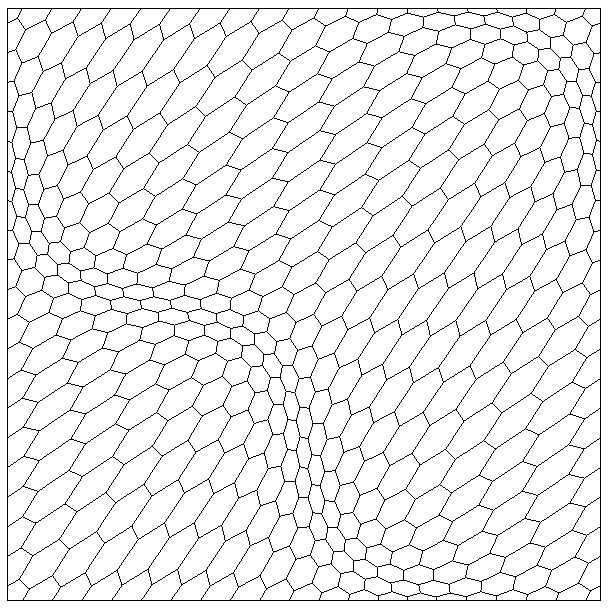}&
\includegraphics[width=0.3\linewidth]{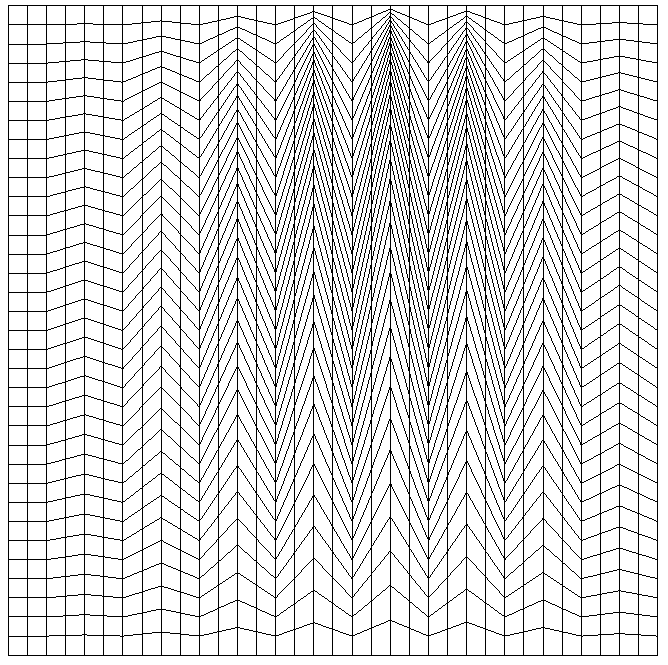}&
\includegraphics[width=0.3\linewidth]{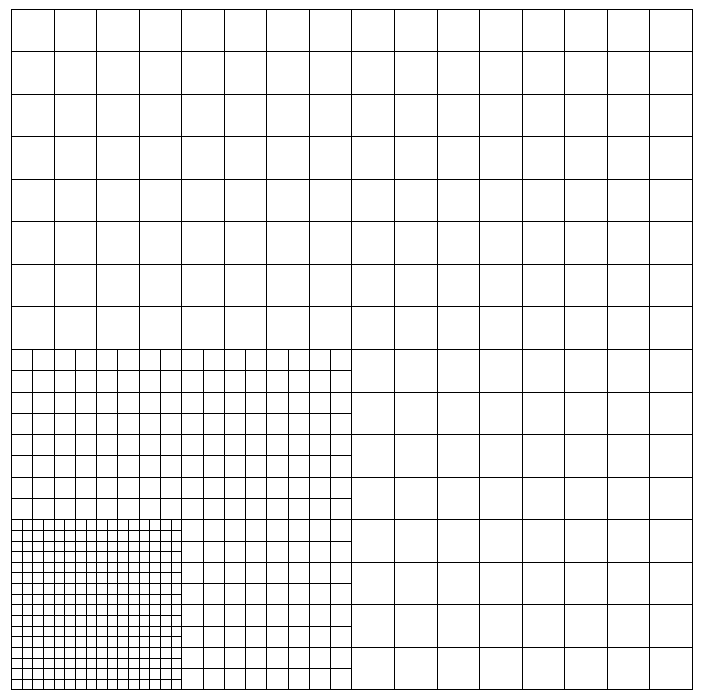}
\end{tabular}
\caption{Examples of members from the mesh families used in numerical tests: hexagonal (left), Kershaw (centre) and locally refined Cartesian (right).}
\label{fig:ex.meshes}
\end{center}
\end{figure}

\subsubsection{Linear single-phase incompressible flow}

We first test the LEPNC method on \eqref{eq:linear} with $\Lambda={\rm Id}$ and exact solution $\bar u(x,y)=\sin(\pi x)\sin(\pi y)$. For comparison, we also present the results obtained with the HHO$(k,\ell)$ method detailed in \cite[Section 5.1]{hho-book}, with degree of edge unknowns $k=0$ and degree of element unknowns $\ell=1$. 
The reason for choosing these particular $(k,\ell)$ is that the HHO$(0,1)$ method has (whether before or after static condensation) the same number of degrees of freedom as the LEPNC method. 
The results for the three families of meshes are presented in Fig. \ref{fig:testL.errors}. Note that for the the HHO$(0,1)$ method, the error $E_{H^1}$ is measured using the discrete $H^1$-norm defined in \cite[Eq. (2.35)]{hho-book}, and $E_{L^2}$ is computed from the $L^2$-norm of the element unknowns.

\begin{figure}\centering
  \ref{conv.testL}
  \vspace{0.5cm}\\
  \begin{minipage}[b]{0.45\linewidth}
    \centering
    \begin{tikzpicture}[scale=0.65]
      \begin{loglogaxis}[ legend columns=3, legend to name=conv.testL ]
        \addplot [mark=*, blue] table[x=meshsize,y=H1error] {dat/testL/lepnc_kershaw.dat};
        \addplot [mark=*, red] table[x=meshsize,y=H1error] {dat/testL/lepnc_locref.dat};
        \addplot [mark=*, black] table[x=meshsize,y=H1error] {dat/testL/lepnc_hexa.dat};
        \addplot [mark=square*, mark options=solid, blue, dashed] table[x=meshsize,y=H1error] {dat/testL/hho_kershaw.dat};
        \addplot [mark=square*, mark options=solid, red, dashed] table[x=meshsize,y=H1error] {dat/testL/hho_locref.dat};
        \addplot [mark=square*, mark options=solid, black, dashed] table[x=meshsize,y=H1error] {dat/testL/hho_hexa.dat};
        \logLogSlopeTriangle{0.90}{0.4}{0.1}{1}{black};
				\addlegendentry{LEPNC Kershaw}
				\addlegendentry{LEPNC locally refined}
        \addlegendentry{LEPNC hexagonal}
				\addlegendentry{HHO$(0,1)$ Kershaw}
				\addlegendentry{HHO$(0,1)$ locally refined}
        \addlegendentry{HHO$(0,1)$ hexagonal}
      \end{loglogaxis}
    \end{tikzpicture}
    \subcaption{$E_{H^1}$ vs. $h$. \label{fig:testL.errors.H1}}
  \end{minipage}
  \begin{minipage}[b]{0.45\linewidth}
    \centering
    \begin{tikzpicture}[scale=0.65]
      \begin{loglogaxis}[ legend columns=3, legend to name=conv.testL ]
        \addplot [mark=*, blue] table[x=meshsize,y=L2error] {dat/testL/lepnc_kershaw.dat};
        \addplot [mark=*, red] table[x=meshsize,y=L2error] {dat/testL/lepnc_locref.dat};
        \addplot [mark=*, black] table[x=meshsize,y=L2error] {dat/testL/lepnc_hexa.dat};
        \addplot [mark=square*, mark options=solid, blue, dashed] table[x=meshsize,y=L2error] {dat/testL/hho_kershaw.dat};
        \addplot [mark=square*, mark options=solid, red, dashed] table[x=meshsize,y=L2error] {dat/testL/hho_locref.dat};
        \addplot [mark=square*, mark options=solid, black, dashed] table[x=meshsize,y=L2error] {dat/testL/hho_hexa.dat};
        \logLogSlopeTriangle{0.90}{0.4}{0.1}{2}{black};
				\addlegendentry{LEPNC Kershaw}
				\addlegendentry{LEPNC locally refined}
        \addlegendentry{LEPNC hexagonal}
				\addlegendentry{HHO$(0,1)$ Kershaw}
				\addlegendentry{HHO$(0,1)$ locally refined}
        \addlegendentry{HHO$(0,1)$ hexagonal}
      \end{loglogaxis}
    \end{tikzpicture}
    \subcaption{$E_{L^2}$ vs. $h$. \label{fig:testL.errors.L2}}
  \end{minipage}
\caption{Errors versus mesh size for the linear equation. \label{fig:testL.errors}}
\end{figure}

As expected from Theorem \ref{th:error.est.lepnc}, the rate of convergence of the LEPNC scheme in $H^1$-norm is 1 on all three families of meshes. 
An improved rate of order 2 is observed in $L^2$-norm and, even though it is not stated in Theorem \ref{th:error.est.lepnc}, it is also quite expected since LEPNC is close to a lowest-order finite element method (we note that improved $L^2$ estimates can be obtained, using a Nitsche argument, in the context of the GDM \cite{DN16}).

In terms of $H^1$-error, HHO$(0,1)$ seems to over-perform LEPNC on all meshes, especially on distorted ones (Kershaw, hexagonal) where the difference is a full order of magnitude; the difference is less perceptible on more regular meshes like the locally refined ones. 
This is also the case, although much less pronounced (factor 2 instead of a full order of magnitude), in $L^2$-norm on hexagonal and Kershaw meshes; interestingly, the trend is actually reversed on locally refined meshes, with LEPNC providing an $L^2$-error about five times smaller than HHO$(0,1)$, indicating that LEPNC seems to produce a better approximation of the solution itself (if not its gradient) on regular meshes. 
Of course, all these comparisons must be taken with a grain of salt since they do not exactly use the same norms. 
Additionally, it should be noted that the HHO$(0,1)$ scheme does not readily produce an explicit function that embeds all the methods' design (it is, in this sense, more of a \emph{virtual} method), whereas LEPNC does.

\subsubsection{Stefan problem}

We consider the problem \eqref{eq:stefan.weak} with the following Stefan non-linearity:
\[
\zeta(s)=\left\{\begin{array}{ll} s &\mbox{ if $s\le 0$},\\
0&\mbox{ if $0\le s\le 1$},\\
s-1&\mbox{ if $s\ge 1$}.
\end{array}\right.
\]

\textsc{Test S1.} For this test, we take an exact smooth solution $\bar u$ such that $\zeta(\bar u)$ is also smooth, but not trivial (the solution $\bar u$ crosses the value $0$ at which $\zeta$ is not differentiable). 
Setting $s(x,y)=\frac{x+y}{\sqrt{2}}$ the coordinate along the first diagonal, the exact solution is $\bar u(x,y)=(s(x,y)-0.5)^3$. The functions $\bar u$ and $\zeta(\bar u)$ are represented in Fig. \ref{fig:testS1.sol}

\begin{figure}
\begin{center}
\begin{tabular}{cc}
\includegraphics[width=0.4\linewidth]{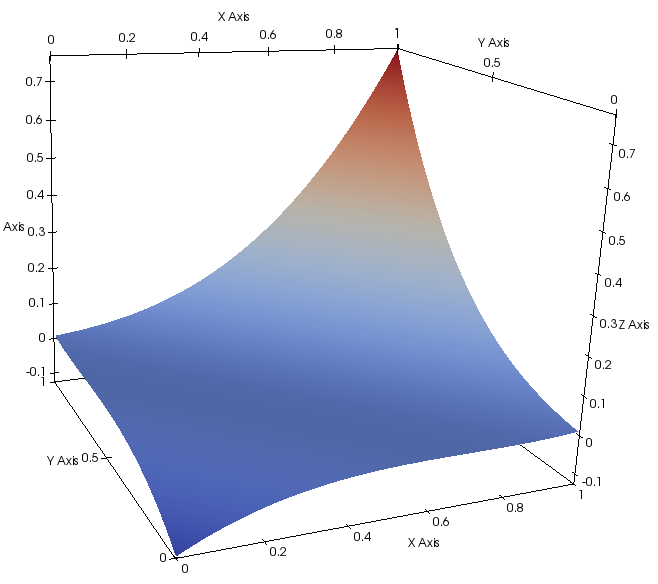}&
\includegraphics[width=0.4\linewidth]{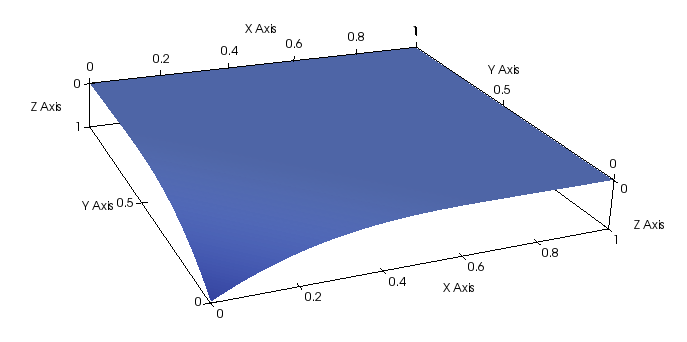}
\end{tabular}
\caption{Exact solution $\bar u$ (left) and $\zeta(\bar u)$ (right) for Test S1.}
\label{fig:testS1.sol}
\end{center}
\end{figure}

The convergence graphs are given in Fig. \ref{fig:testS1.errors}. 
For solutions that are piecewise smooth on the mesh, the analysis of \cite{DE19} shows that, for a low-order scheme as the LEPNC, the expected rate of convergence in energy error $E_{H^1,\zeta}$ for the regular variable $\zeta(u)$ is $\mathcal O(h)$, which corresponds to the rate observed for all three families in Fig. \ref{fig:testS1.errors.H1}. 
The convergence rate in mass-lumped $L^2$-norm on the $u$ variable is always larger than one: it is almost $2$ for the hexagonal and locally refined mesh families, and around 1.5 for the Kershaw family.
 This convergence is however less regular than the convergence on the variable $\zeta(u)$.

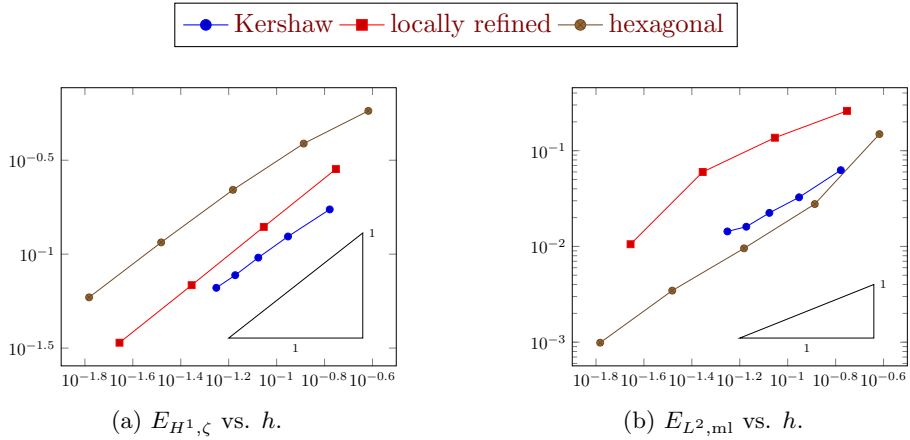
\begin{figure}\centering
  \ref{conv.testS1}
  \vspace{0.5cm}\\
  \begin{minipage}[b]{0.45\linewidth}
    \centering
    \begin{tikzpicture}[scale=0.65]
      \begin{loglogaxis}[ legend columns=-1, legend to name=conv.testS1 ]
        \addplot table[x=meshsize,y=EnergyError] {dat/testS1/kershaw.dat};
        \addplot table[x=meshsize,y=EnergyError] {dat/testS1/locref.dat};
        \addplot table[x=meshsize,y=EnergyError] {dat/testS1/hexa.dat};
        \logLogSlopeTriangle{0.90}{0.4}{0.1}{1}{black};
				\addlegendentry{Kershaw}
				\addlegendentry{locally refined}
        \addlegendentry{hexagonal}
      \end{loglogaxis}
    \end{tikzpicture}
    \subcaption{$E_{H^1,\zeta}$ vs. $h$. \label{fig:testS1.errors.H1}}
  \end{minipage}
  \begin{minipage}[b]{0.45\linewidth}
    \centering
    \begin{tikzpicture}[scale=0.65]
      \begin{loglogaxis}[ legend columns=-1, legend to name=conv.testS1 ]
        \addplot table[x=meshsize,y=L2error_ml] {dat/testS1/kershaw.dat};
        \addplot table[x=meshsize,y=L2error_ml] {dat/testS1/locref.dat};
        \addplot table[x=meshsize,y=L2error_ml] {dat/testS1/hexa.dat};
        \logLogSlopeTriangle{0.90}{0.4}{0.1}{1}{black};
				\addlegendentry{Kershaw}
				\addlegendentry{locally refined}
        \addlegendentry{hexagonal}
      \end{loglogaxis}
    \end{tikzpicture}
    \subcaption{$E_{L^2,{\rm ml}}$ vs. $h$. \label{fig:testS1.errors.L2}}
  \end{minipage}
\caption{Errors versus mesh size for Test S1. \label{fig:testS1.errors}}
\end{figure}

\textsc{Test S2.} The previous test is not representative of the typical behaviour of solutions to Stefan problems. 
In the general case, and in particular with null source terms, these solutions $\bar u$ are discontinuous in the range of values where $\zeta$ remains constant, which therefore does not prevent $\zeta(\bar u)$ from being continuous. 
This next test case, taken from \cite{DE19}, displays such a behaviour. 
Setting $\gamma=\frac13$, the exact solution is
\[
\bar u(x,y)=\cosh(s(x,y)-\gamma)\mbox{ if $s(x,y)\ge \gamma$}\,,\quad \bar u(x,y)=0\mbox{ if $s(x,y)<\gamma$},
\]
where, as in Test S1, $s(x,y)=\frac{x+y}{\sqrt{2}}$ is the coordinate along the first diagonal.
This solution is discontinuous along the line $s(x,y)=\gamma$, but $\zeta(\bar u)$ is continuous (and even in $H^2(\Omega)$); see Fig. \ref{fig:testS2.sol}. 
This function corresponds to a zero source term in \eqref{eq:stefan.strong}.

\begin{figure}
\begin{center}
\begin{tabular}{cc}
\includegraphics[width=0.4\linewidth]{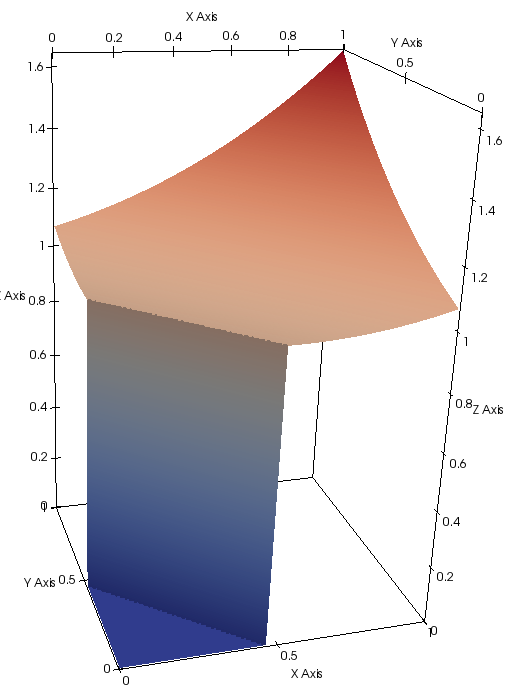}&
\includegraphics[width=0.4\linewidth]{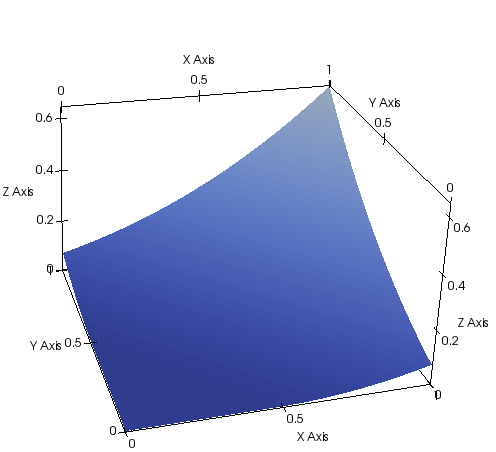}
\end{tabular}
\caption{Exact solution $\bar u$ (left) and $\zeta(\bar u)$ (right) for Test S2.}
\label{fig:testS2.sol}
\end{center}
\end{figure}

The convergence results are presented in Fig. \ref{fig:testS2.errors}. 
As expected from the results of \cite{DE19}, we observe in Fig. \ref{fig:testS2.errors.H1} an estimate of the kind $E_{H^1,\zeta}=\mathcal O(h)$. 
The convergence rate in mass-lumped $L^2$ error $E_{L^2,{\rm ml}}$ for the variable $u$ is however much lower (and, as in Test S1, rather irregular), which is expected since $u$ is discontinuous; the overall convergence rate of $E_{L^2,{\rm ml}}$ is about $\mathcal O(h^{0.6})$ for all mesh families. 
Fig. \ref{fig:testS2.approx_sol} shows the approximate variables $u$ and $\zeta(u)$ obtained on the second hexagonal mesh in the family; the discontinuity of $\bar u$, typical in Stefan's problems, clearly impacts the convergence on this variable.

\begin{figure}\centering
  \ref{conv.testS2}
  \vspace{0.5cm}\\
  \begin{minipage}[b]{0.45\linewidth}
    \centering
    \begin{tikzpicture}[scale=0.65]
      \begin{loglogaxis}[ legend columns=-1, legend to name=conv.testS2 ]
        \addplot table[x=meshsize,y=EnergyError] {dat/testS2/kershaw.dat};
        \addplot table[x=meshsize,y=EnergyError] {dat/testS2/locref.dat};
        \addplot table[x=meshsize,y=EnergyError] {dat/testS2/hexa.dat};
        \logLogSlopeTriangle{0.90}{0.4}{0.1}{1}{black};
				\addlegendentry{Kershaw}
				\addlegendentry{locally refined}
        \addlegendentry{hexagonal}
      \end{loglogaxis}
    \end{tikzpicture}
    \subcaption{$E_{H^1,\zeta}$ vs. $h$. \label{fig:testS2.errors.H1}}
  \end{minipage}
  \begin{minipage}[b]{0.45\linewidth}
    \centering
    \begin{tikzpicture}[scale=0.65]
      \begin{loglogaxis}[ legend columns=-1, legend to name=conv.testS2 ]
        \addplot table[x=meshsize,y=L2error_ml] {dat/testS2/kershaw.dat};
        \addplot table[x=meshsize,y=L2error_ml] {dat/testS2/locref.dat};
        \addplot table[x=meshsize,y=L2error_ml] {dat/testS2/hexa.dat};
        \logLogSlopeTriangle{0.90}{0.4}{0.1}{1}{black};
				\addlegendentry{Kershaw}
				\addlegendentry{locally refined}
        \addlegendentry{hexagonal}
      \end{loglogaxis}
    \end{tikzpicture}
    \subcaption{$E_{L^2,{\rm ml}}$ vs. $h$. \label{fig:testS2.errors.L2}}
  \end{minipage}
\caption{Errors versus mesh size for Test S2. \label{fig:testS2.errors}}
\end{figure}
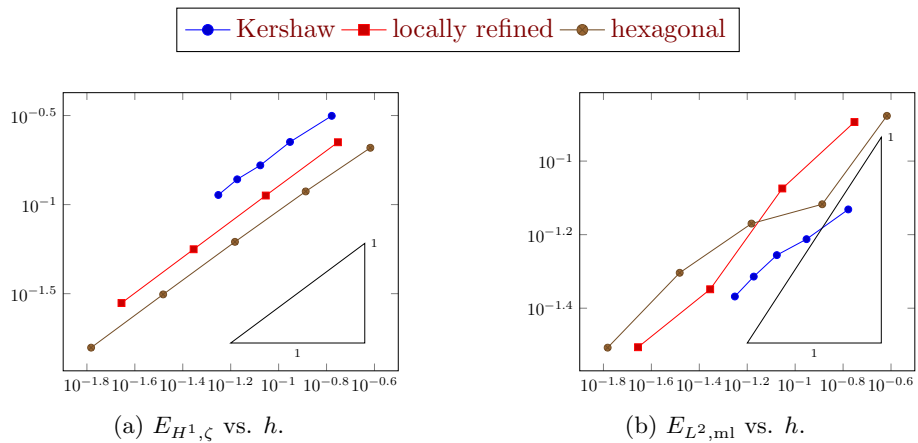

\begin{figure}
\begin{center}
\begin{tabular}{cc}
\includegraphics[width=0.4\linewidth]{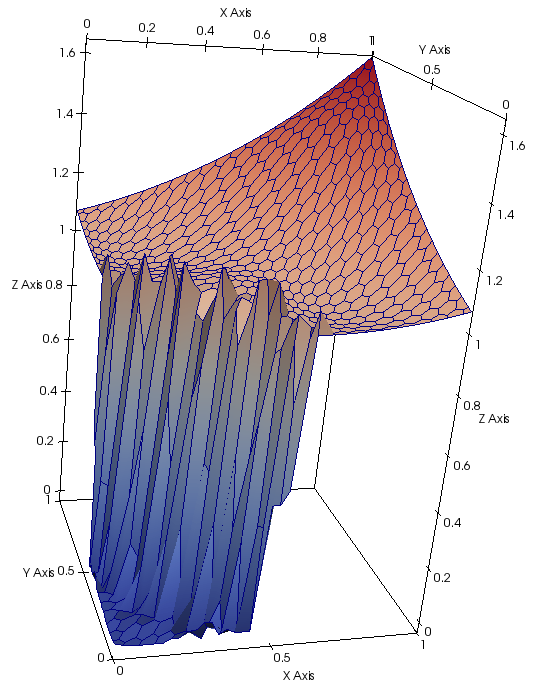}&
\includegraphics[width=0.4\linewidth]{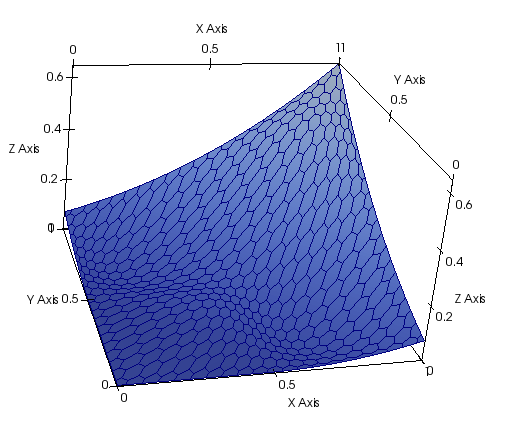}
\end{tabular}
\caption{Approximate solution $u$ (left) and $\zeta(u)$ (right) obtained on the second member of the hexagonal mesh family in Test S2.}
\label{fig:testS2.approx_sol}
\end{center}
\end{figure}

\subsubsection{Porous medium equation}

We now consider the stationary porous medium equation, corresponding to \eqref{eq:stefan.strong} with non-linearity
\[
\zeta(s)=|s|^{m-1}s\mbox{ with $m\ge 1$}.
\]
\textsc{Test P1}.
For this test, the exact solutions $\bar u$ and $\zeta(\bar u)$ are both smooth. We take $\bar u(x,y)=\sin(\pi x)\sin(\pi y)$, and $m\in\{1,2,3,4\}$. Note that the case $m=1$ actually corresponds to $\zeta(s)=s$, so \eqref{eq:stefan.strong} is the linear equation \eqref{eq:linear} with an added reaction term $u$. 
The results of the test, on the same Kershaw, locally refined and hexagonal meshes as in Tests S1 and S2, are presented in Fig. \ref{fig:testP1.errors}. 

Looking first at the case $m=1$, we notice that the results are worse on the Kershaw meshes; despite the smoothness of the solution, the distortion of these meshes impact the approximation error negatively. 
We still see an order $\mathcal O(h)$ convergence in both energy and mass-lumped $L^2$ norm; this is expected for the energy error given that LEPNC is a low-order scheme, but one could have hoped to see a super-convergence effect in the $L^2$-norm.
On the contrary, for locally refined and hexagonal meshes, this super-convergence is visible and the $L^2$-norm error decays as $\mathcal O(h^2)$, while the energy norm decays as $\mathcal O(h)$.

Considering now the nonlinear cases $m=2,3,4$, we see that the energy error still decays as $h$ for the locally refined and hexagonal meshes. However, the $L^2$-norm error no longer super-converges with an order 2, but rather with an order 1.5. The results for the Kershaw meshes show much lower convergence rates. For $m=2$ rate for the $L^2$-norm error is still close to 1, but the energy error only decays as about $\mathcal O(h^{0.5})$. 
For $m=3,4$, the rates in $L^2$-norm and energy error are respectively 0.5 and 0.3 -- at least at the considered mesh sizes. 
Looking at the pictures it seems that the rate in energy norm has a tendency to increase towards the last meshes in the Kershaw family. 
It should be mentioned here that for certain cases (typically, the finest hexagonal or Kershaw meshes, with $m=3,4$), a straightforward Newton algorithm does not converge and relaxation has to be applied.

\begin{figure}\centering
  \ref{conv.testP1}
  \vspace{0.5cm}\\
  \begin{minipage}[b]{0.45\linewidth}
    \centering
    \begin{tikzpicture}[scale=0.65]
      \begin{loglogaxis}[ legend columns=3, legend to name=conv.testP1 ]
        \addplot [mark=*, blue] table[x=meshsize,y=EnergyError] {dat/testP1/m1_kershaw.dat};
        \addplot [mark=square*, black] table[x=meshsize,y=EnergyError] {dat/testP1/m1_locref.dat};
        \addplot [mark=*, red] table[x=meshsize,y=EnergyError] {dat/testP1/m1_hexa.dat};
        \addplot [mark=*, mark options=solid, blue, dashed] table[x=meshsize,y=L2error_ml] {dat/testP1/m1_kershaw.dat};
        \addplot [mark=square*, mark options=solid, black, dashed] table[x=meshsize,y=L2error_ml] {dat/testP1/m1_locref.dat};
        \addplot [mark=*, mark options=solid, red, dashed] table[x=meshsize,y=L2error_ml] {dat/testP1/m1_hexa.dat};
        \logLogSlopeTriangle{0.90}{0.4}{0.1}{1}{black};
        \logLogSlopeTriangle{0.90}{0.4}{0.1}{2}{black};
				\addlegendentry{$E_{H^1,\zeta}$, Kershaw}
				\addlegendentry{$E_{H^1,\zeta}$, locally refined}
        \addlegendentry{$E_{H^1,\zeta}$, hexagonal}
				\addlegendentry{$E_{L^2, {\rm ml}}$, Kershaw}
				\addlegendentry{$E_{L^2, {\rm ml}}$, locally refined}
        \addlegendentry{$E_{L^2, {\rm ml}}$, hexagonal}
      \end{loglogaxis}
    \end{tikzpicture}
    \subcaption{$m=1$ (linear equation). \label{fig:testP1.errors.m1}}
  \end{minipage}
  \begin{minipage}[b]{0.45\linewidth}
    \centering
    \begin{tikzpicture}[scale=0.65]
      \begin{loglogaxis}[ legend columns=3, legend to name=conv.testP1 ]
        \addplot [mark=*, blue] table[x=meshsize,y=EnergyError] {dat/testP1/m2_kershaw.dat};
        \addplot [mark=square*, black] table[x=meshsize,y=EnergyError] {dat/testP1/m2_locref.dat};
        \addplot [mark=*, red] table[x=meshsize,y=EnergyError] {dat/testP1/m2_hexa.dat};
        \addplot [mark=*, mark options=solid, blue, dashed] table[x=meshsize,y=L2error_ml] {dat/testP1/m2_kershaw.dat};
        \addplot [mark=square*, mark options=solid, black, dashed] table[x=meshsize,y=L2error_ml] {dat/testP1/m2_locref.dat};
        \addplot [mark=*, mark options=solid, red, dashed] table[x=meshsize,y=L2error_ml] {dat/testP1/m2_hexa.dat};
        \logLogSlopeTriangle{0.90}{0.4}{0.1}{0.5}{black};
        \logLogSlopeTriangle{0.90}{0.4}{0.1}{1}{black};
        \logLogSlopeTriangle{0.90}{0.4}{0.1}{1.5}{black};
				\addlegendentry{$E_{H^1,\zeta}$, Kershaw}
				\addlegendentry{$E_{H^1,\zeta}$, locally refined}
        \addlegendentry{$E_{H^1,\zeta}$, hexagonal}
				\addlegendentry{$E_{L^2, {\rm ml}}$, Kershaw}
				\addlegendentry{$E_{L^2, {\rm ml}}$, locally refined}
        \addlegendentry{$E_{L^2, {\rm ml}}$, hexagonal}
      \end{loglogaxis}
    \end{tikzpicture}
    \subcaption{$m=2$. \label{fig:testP1.errors.m2}}
  \end{minipage}
  \\
  \begin{minipage}[b]{0.45\linewidth}
    \centering
    \begin{tikzpicture}[scale=0.65]
      \begin{loglogaxis}[ legend columns=3, legend to name=conv.testP1 ]
        \addplot [mark=*, blue] table[x=meshsize,y=EnergyError] {dat/testP1/m3_kershaw.dat};
        \addplot [mark=square*, black] table[x=meshsize,y=EnergyError] {dat/testP1/m3_locref.dat};
        \addplot [mark=*, red] table[x=meshsize,y=EnergyError] {dat/testP1/m3_hexa.dat};
        \addplot [mark=*, mark options=solid, blue, dashed] table[x=meshsize,y=L2error_ml] {dat/testP1/m3_kershaw.dat};
        \addplot [mark=square*, mark options=solid, black, dashed] table[x=meshsize,y=L2error_ml] {dat/testP1/m3_locref.dat};
        \addplot [mark=*, mark options=solid, red, dashed] table[x=meshsize,y=L2error_ml] {dat/testP1/m3_hexa.dat};
        \logLogSlopeTriangle{0.90}{0.4}{0.1}{0.5}{black};
        \logLogSlopeTriangle{0.90}{0.4}{0.1}{1}{black};
        \logLogSlopeTriangle{0.90}{0.4}{0.1}{1.5}{black};
				\addlegendentry{$E_{H^1,\zeta}$, Kershaw}
				\addlegendentry{$E_{H^1,\zeta}$, locally refined}
        \addlegendentry{$E_{H^1,\zeta}$, hexagonal}
				\addlegendentry{$E_{L^2, {\rm ml}}$, Kershaw}
				\addlegendentry{$E_{L^2, {\rm ml}}$, locally refined}
        \addlegendentry{$E_{L^2, {\rm ml}}$, hexagonal}
      \end{loglogaxis}
    \end{tikzpicture}
    \subcaption{$m=3$. \label{fig:testP1.errors.m3}}
  \end{minipage}
  \begin{minipage}[b]{0.45\linewidth}
    \centering
    \begin{tikzpicture}[scale=0.65]
      \begin{loglogaxis}[ legend columns=3, legend to name=conv.testP1 ]
        \addplot [mark=*, blue] table[x=meshsize,y=EnergyError] {dat/testP1/m4_kershaw.dat};
        \addplot [mark=square*, black] table[x=meshsize,y=EnergyError] {dat/testP1/m4_locref.dat};
        \addplot [mark=*, red] table[x=meshsize,y=EnergyError] {dat/testP1/m4_hexa.dat};
        \addplot [mark=*, mark options=solid, blue, dashed] table[x=meshsize,y=L2error_ml] {dat/testP1/m4_kershaw.dat};
        \addplot [mark=square*, mark options=solid, black, dashed] table[x=meshsize,y=L2error_ml] {dat/testP1/m4_locref.dat};
        \addplot [mark=*, mark options=solid, red, dashed] table[x=meshsize,y=L2error_ml] {dat/testP1/m4_hexa.dat};
        \logLogSlopeTriangle{0.90}{0.4}{0.1}{0.5}{black};
        \logLogSlopeTriangle{0.90}{0.4}{0.1}{1}{black};
        \logLogSlopeTriangle{0.90}{0.4}{0.1}{1.5}{black};
				\addlegendentry{$E_{H^1,\zeta}$, Kershaw}
				\addlegendentry{$E_{H^1,\zeta}$, locally refined}
        \addlegendentry{$E_{H^1,\zeta}$, hexagonal}
				\addlegendentry{$E_{L^2, {\rm ml}}$, Kershaw}
				\addlegendentry{$E_{L^2, {\rm ml}}$, locally refined}
        \addlegendentry{$E_{L^2, {\rm ml}}$, hexagonal}
      \end{loglogaxis}
    \end{tikzpicture}
    \subcaption{$m=4$. \label{fig:testP1.errors.m4}}
  \end{minipage}
\caption{Errors versus mesh size for Test P1. \label{fig:testP1.errors}}
\end{figure}
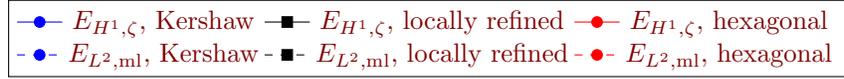
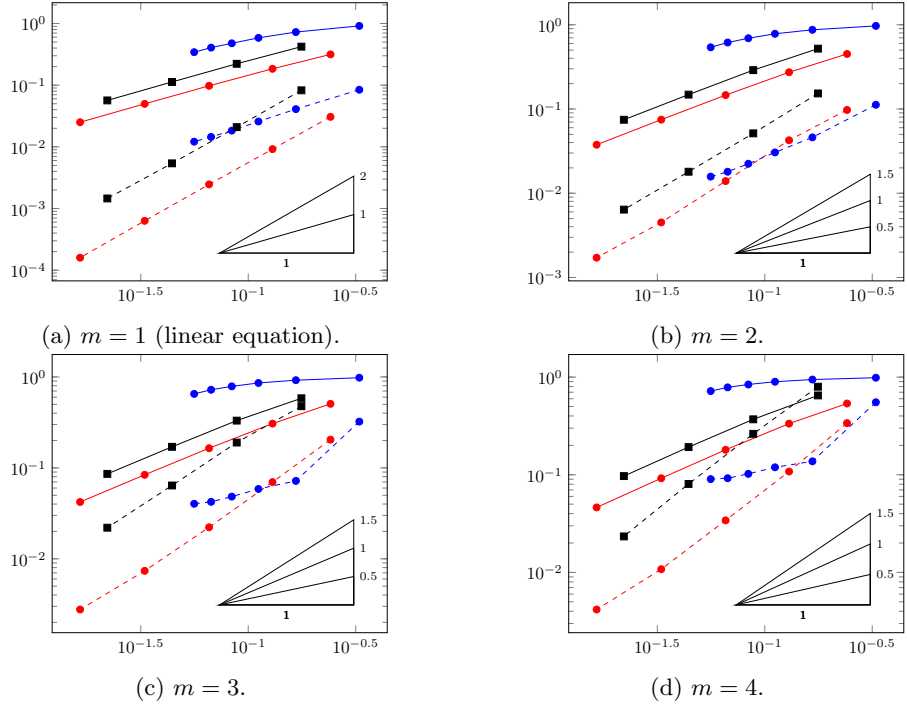

\textsc{Test P2}. 
This test features a less regular exact solution $\bar u$. 
We take $\bar u(x,y)=\max(\rho^2-r(x,y)^2,0)$, where $\rho=0.3$ and $r(x,y)^2=(x-0.5)^2+(y-0.5)^2$.  
In the domain $\Omega$, the graph of $\bar u$ is the tip of a paraboloid; this solution belongs to $H^1(\Omega)$ but not to $H^2(\Omega)$. 
We take $m=2$, so $\zeta(\bar u)\in H^2(\Omega)$. 
For this value of $m$, the singularity of $\bar u$ at the circle $r(x,y)^2=\rho^2$ is typical of the singularity exhibited by the Barenblatt solution in the transient setting \cite{Barenblatt52,Vazquez2007}. 
The results are presented in Fig. \ref{fig:testP2.errors}. 
As in Test P1, we see that the energy error decays as $\mathcal O(h)$, except for the very distorted Kershaw meshes for which a rate of about 0.3 is achieved with the last two meshes (further refinement might improve that rate). 
In terms of the $L^2$-error, all three mesh families lead to a rate of convergence of about 1. 
Even for the relatively regular mesh families (hexahedral, locally refined), no super-convergence is observed. 
This is somehow expected given that the exact solution is not $H^2$-regular.

\begin{figure}\centering
  \ref{conv.testP2}
  \vspace{0.5cm}\\
  \begin{minipage}[b]{0.45\linewidth}
    \centering
    \begin{tikzpicture}[scale=0.65]
      \begin{loglogaxis}[ legend columns=-1, legend to name=conv.testP2 ]
        \addplot table[x=meshsize,y=EnergyError] {dat/testP2/kershaw.dat};
        \addplot table[x=meshsize,y=EnergyError] {dat/testP2/locref.dat};
        \addplot table[x=meshsize,y=EnergyError] {dat/testP2/hexa.dat};
        \logLogSlopeTriangle{0.90}{0.4}{0.1}{1}{black};
				\addlegendentry{Kershaw}
				\addlegendentry{locally refined}
        \addlegendentry{hexagonal}
      \end{loglogaxis}
    \end{tikzpicture}
    \subcaption{$E_{H^1,\zeta}$ vs. $h$. \label{fig:testP2.errors.H1}}
  \end{minipage}
  \begin{minipage}[b]{0.45\linewidth}
    \centering
    \begin{tikzpicture}[scale=0.65]
      \begin{loglogaxis}[ legend columns=-1, legend to name=conv.testP2 ]
        \addplot table[x=meshsize,y=L2error_ml] {dat/testP2/kershaw.dat};
        \addplot table[x=meshsize,y=L2error_ml] {dat/testP2/locref.dat};
        \addplot table[x=meshsize,y=L2error_ml] {dat/testP2/hexa.dat};
        \logLogSlopeTriangle{0.90}{0.4}{0.1}{1}{black};
				\addlegendentry{Kershaw}
				\addlegendentry{locally refined}
        \addlegendentry{hexagonal}
      \end{loglogaxis}
    \end{tikzpicture}
    \subcaption{$E_{L^2,{\rm ml}}$ vs. $h$. \label{fig:testP2.errors.L2}}
  \end{minipage}
\caption{Errors versus mesh size for Test P2. \label{fig:testP2.errors}}
\end{figure}
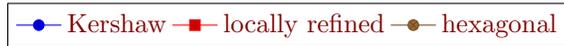
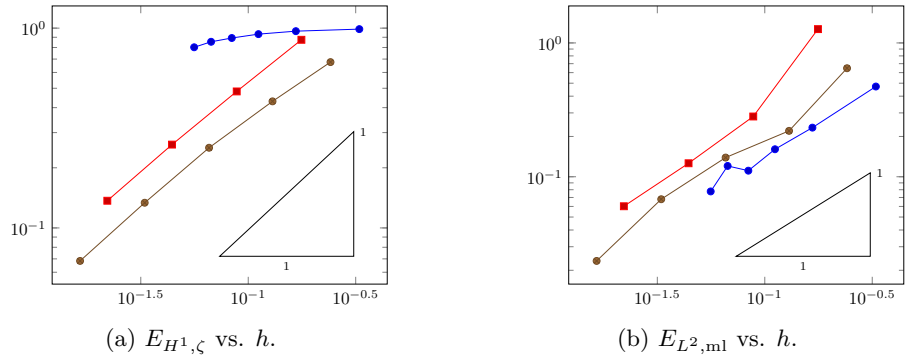

\section{Analysis of polytopal non-conforming finite element schemes}\label{sec:prop.NC}

Polytopal non-conforming finite element schemes are gradient discretisation methods (GDM) and, as such, enjoy all the error estimates and convergence results of GDMs. 
We recall here the notion of GDM and associated results, which yield in particular the theorems \ref{th:error.est} and \ref{th:cv.stefan}. 
Most of the following material is taken from \cite[Section 9.1]{gdm}.

\subsection{Gradient discretisation method}\label{sec:gdm}

The GDM is a generic framework for designing and analysing numerical schemes for elliptic and parabolic problems (although extensions to linear advection is also possible \cite{DEGH19}). 
It consists in replacing, in the weak formulation of the model, the continuous space and operator by their discrete analogues given by a gradient discretisation (GD).

\begin{definition}[Gradient discretisation for homogeneous Dirichlet BCs]
A gradient discretisation for homogeneous Dirichlet boundary conditions is a triplet $\mathcal D=(X_{\mathcal D,0},\Pi_{\mathcal D},\nabla_{\mathcal D})$ where
\begin{itemize}
\item $X_{\mathcal D,0}$ is a finite-dimensional space of unknowns, that encodes the homogeneous boundary conditions,
\item $\Pi_{\mathcal D}:X_{\mathcal D,0}\to L^2(\Omega)$ is a linear operator that reconstructs a function from a vector of unknowns,
\item $\nabla_{\mathcal D}:X_{\mathcal D,0}\to L^2(\Omega)^d$ is a linear operator that reconstructs a ``gradient'' from a vector of unknowns; it must be chosen such that $\norm{\nabla_{\mathcal D} \cdot}{L^2(\Omega)^d}$ is a norm on $X_{\mathcal D,0}$.
\end{itemize}
A gradient discretisation $\mathcal D$ is said to have a piecewise constant reconstruction if there exists a basis $(\mathbf{e}_i)_{i\in I}$ of $X_{\mathcal D,0}$ and disjoint subsets $(U_i)_{i\in I}$ of $\Omega$ such that
\begin{equation}\label{eq:Pi.pwcst}
\Pi_{\mathcal D}v=\sum_{i\in I}v_i\mathbf{1}_{U_i}\quad\forall v=\sum_{i\in I}v_i\mathbf{e}_i\in X_{\mathcal D,0},
\end{equation}
where $\mathbf{1}_{U_i}$ is the characteristic function of $U_i$ (equal to $1$ in this set and to $0$ elsewhere).
\end{definition}

Once a GD $\mathcal D$ is chosen, a gradient scheme (GS) for the linear diffusion problem \eqref{ellgenf} is obtained by writing:
\begin{equation}\begin{array}{l}
\mbox{Find $u \in X_{\mathcal D,0}$ such that, }\forall v \in X_{\mathcal D,0},\\
\displaystyle\int_\Omega \Lambda\nabla_{\mathcal D} u\cdot\nabla_{\mathcal D} v d\mathbi{x}  
= \int_\Omega f\Pi_{\mathcal D}v d\mathbi{x} 
-\int_\Omega \bm{F}\cdot\nabla_{\mathcal D} v d\mathbi{x}.
\end{array}\label{eq:gs.lin}\end{equation}
If $\mathcal D$ has a piecewise constant reconstruction, then it makes sense, for a generic function $g:\mathbb{R}\to\mathbb{R}$ and $v\in X_{\mathcal D,0}$, to define $g(v)\in X_{\mathcal D,0}$ component-by-component: if $v=\sum_{i\in I}v_i\mathbf{e}_i$, then $g(v)=\sum_{i\in I}g(v_i)\mathbf{e}_i$. This definition is justified by the following commutation property, coming from \eqref{eq:Pi.pwcst}:
\[
\Pi_{\mathcal D}g(v)=g(\Pi_{\mathcal D}v)\quad\forall v\in X_{\mathcal D,0}.
\]
Then, a GS for the non-linear model \eqref{eq:stefan.weak} is obtained writing
\begin{equation}\begin{array}{l}
\mbox{Find $u \in X_{\mathcal D,0}$ such that, }\forall v \in X_{\mathcal D,0},\\
\displaystyle\int_\Omega\left(\Pi_{\mathcal D} u\,\Pi_{\mathcal D} v + \Lambda\nabla_{\mathcal D}\zeta(u)\cdot\nabla_{\mathcal D} v\right) d\mathbi{x}  
= \int_\Omega f\Pi_{\mathcal D}v d\mathbi{x} 
-\int_\Omega \bm{F}\cdot\nabla_{\mathcal D} v d\mathbi{x}.
\end{array}\label{eq:stefan.gs}\end{equation}

The accuracy and convergence of a GS is assessed through the following quantities and notions.
\begin{enumerate}
\item\emph{Coercivity}. 
The discrete Poincar\'e constant of a GD $\mathcal D$ is
\[
C_{\mathcal D}:=\max_{v\in X_{\mathcal  D,0}}\frac{\norm{\Pi_{\mathcal D}v}{L^2(\Omega)}}{\norm{\nabla_{\mathcal D}v}{L^2(\Omega)^d}}.
\]
A sequence $(\mathcal D_m)_{m\in\mathbb{N}}$ is \emph{coercive} if $(C_{\mathcal D_m})_{m\in\mathbb{N}}$ is bounded.
\item \emph{Consistency}. The interpolation error of a GD $\mathcal D$ is 
\[
S_{\mathcal D}(\phi):=\min_{v\in X_{\mathcal D,0}}\left(\norm{\Pi_{\mathcal D}v-\phi}{L^2(\Omega)}+
\norm{\nabla_{\mathcal D}v-\nabla\phi}{L^2(\Omega)^d}\right)\quad\forall\phi\in H^1_0(\Omega).
\]
A sequence $(\mathcal D_m)_{m\in\mathbb{N}}$ is \emph{consistent} if $S_{\mathcal D_m}(\phi)\to 0$ as $m\to\infty$, for all $\phi\in H^1_0(\Omega)$.
\item \emph{Limit-conformity}. The defect of conformity of a GD $\mathcal D$ is 
\[
W_{\mathcal D}(\bm{\psi}):=\max_{v\in X_{\mathcal D,0}\backslash\{0\}}\frac{1}{\norm{\nabla_{\mathcal D}v}{L^2(\Omega)^d}}\left|\int_\Omega \Pi_{\mathcal D}v\mathop{\rm div}\bm{\psi}+\nabla_{\mathcal D}v\cdot\bm{\psi}\mathbi{x}\right|
\quad\forall \bm{\psi}\in H_{\mathop{\rm div}}(\Omega).
\]
A sequence $(\mathcal D_m)_{m\in\mathbb{N}}$ is \emph{limit-conforming} if $W_{\mathcal D_m}(\bm{\psi})\to 0$ as $m\to\infty$, for all $\bm{\psi}\in H_{\mathop{\rm div}}(\Omega)$.
\item \emph{Compactness}. A sequence $(\mathcal D_m)_{m\in\mathbb{N}}$ is \emph{compact} if, for any $(v_m)_{m\in\mathbb{N}}$ such that $v_m\in X_{\mathcal D_m,0}$ for all $m\in\mathbb{N}$ and $(\norm{\nabla_{\mathcal D_m}v}{L^2(\Omega)^d})_{m\in\mathbb{N}}$ is bounded, the sequence $(\Pi_{\mathcal D_m}v)_{m\in\mathbb{N}}$ is relatively compact in $L^2(\Omega)$.
\end{enumerate}

We then recall an error estimate for the linear model and a convergence result for the non-linear model.

\begin{theorem}[Error estimate for the linear model {\cite[Theorem 2.28]{gdm}}]\label{th:gdm.error}
Let $\bar u$ be the solution to \eqref{ellgenf}, $\mathcal D$ be a GD, and $u$ be the solution to the gradient scheme \eqref{eq:gs.lin}.
Then, there exists $C$ depending only on $\Omega$ and $\underline{\lambda},\overline{\lambda}$ in \eqref{hyplambda} such that
\[
\norm{\bar u-\Pi_{\mathcal D}u}{L^2(\Omega)}+\norm{\nabla\bar u-\nabla_{\mathcal D}u}{L^2(\Omega)^d}
\le C(1+C_{\mathcal D})(W_{\mathcal D}(\Lambda\nabla\bar u+\bm{F})+S_{\mathcal D}(\bar u)).
\]
\end{theorem}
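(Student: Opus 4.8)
The plan is to prove this in the classical manner of the second Strang lemma, transposed to the GDM framework. The starting point is to reinterpret the source term through the exact flux: since $\bar u$ solves \eqref{ellgenf}, the field $\bm{\psi}:=\Lambda\nabla\bar u+\bm{F}$ lies in $L^2(\Omega)^d$ (by boundedness of $\Lambda$ in \eqref{hyplambda} and $\bm{F}\in L^2(\Omega)^d$) and satisfies $\mathop{\rm div}\bm{\psi}=-f$ in $\mathcal{D}'(\Omega)$; since $f\in L^2(\Omega)$, this yields $\bm{\psi}\in H_{\mathop{\rm div}}(\Omega)$, so that $W_{\mathcal D}(\bm{\psi})$ is meaningful. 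This field is the only available bridge between the continuous equation and the discrete scheme.

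Next I would derive a discrete error equation. Taking any $v\in X_{\mathcal D,0}$ in the gradient scheme \eqref{eq:gs.lin} and replacing $f$ by $-\mathop{\rm div}\bm{\psi}$, the very definition of the defect of conformity gives $-\int_\Omega\mathop{\rm div}(\bm{\psi})\,\Pi_{\mathcal D}v\,d\mathbi{x}=\int_\Omega\nabla_{\mathcal D}v\cdot\bm{\psi}\,d\mathbi{x}+\mathcal{R}_v$ with $|\mathcal{R}_v|\le W_{\mathcal D}(\bm{\psi})\norm{\nabla_{\mathcal D}v}{L^2(\Omega)^d}$. Substituting $\bm{\psi}=\Lambda\nabla\bar u+\bm{F}$, the two occurrences of $\bm{F}$ cancel and one is left with
\[
\int_\Omega\Lambda(\nabla_{\mathcal D}u-\nabla\bar u)\cdot\nabla_{\mathcal D}v\,d\mathbi{x}=\mathcal{R}_v,\qquad |\mathcal{R}_v|\le W_{\mathcal D}(\bm{\psi})\norm{\nabla_{\mathcal D}v}{L^2(\Omega)^d},\quad\forall v\in X_{\mathcal D,0}.
\]

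From here the estimate is routine. Fixing an arbitrary $w\in X_{\mathcal D,0}$ and testing the error equation with $v=u-w$, I would split $\nabla_{\mathcal D}u-\nabla\bar u=(\nabla_{\mathcal D}u-\nabla_{\mathcal D}w)+(\nabla_{\mathcal D}w-\nabla\bar u)$ and use the uniform ellipticity of $\Lambda$ together with the Cauchy--Schwarz and Young inequalities to obtain $\norm{\nabla\bar u-\nabla_{\mathcal D}u}{L^2(\Omega)^d}\le C(\norm{\nabla\bar u-\nabla_{\mathcal D}w}{L^2(\Omega)^d}+W_{\mathcal D}(\bm{\psi}))$ with $C$ depending only on $\underline{\lambda},\overline{\lambda}$, hence also $\norm{\nabla_{\mathcal D}(u-w)}{L^2(\Omega)^d}\le C(\norm{\nabla\bar u-\nabla_{\mathcal D}w}{L^2(\Omega)^d}+W_{\mathcal D}(\bm{\psi}))$. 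For the $L^2$ part I would write $\norm{\bar u-\Pi_{\mathcal D}u}{L^2(\Omega)}\le\norm{\bar u-\Pi_{\mathcal D}w}{L^2(\Omega)}+\norm{\Pi_{\mathcal D}(w-u)}{L^2(\Omega)}$ and bound the last term by $C_{\mathcal D}\norm{\nabla_{\mathcal D}(w-u)}{L^2(\Omega)^d}$ using the discrete Poincar\'e inequality. Adding the two estimates produces a bound of the form $C(1+C_{\mathcal D})\big(\norm{\bar u-\Pi_{\mathcal D}w}{L^2(\Omega)}+\norm{\nabla\bar u-\nabla_{\mathcal D}w}{L^2(\Omega)^d}+W_{\mathcal D}(\bm{\psi})\big)$; since the left-hand side does not depend on $w$, taking the infimum over $w\in X_{\mathcal D,0}$ replaces the first two terms by $S_{\mathcal D}(\bar u)$ and gives the claim, recalling $\bm{\psi}=\Lambda\nabla\bar u+\bm{F}$.

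The argument contains no real difficulty; the point that deserves most attention is the very first step, namely checking that $\Lambda\nabla\bar u+\bm{F}$ is admissible in $W_{\mathcal D}(\cdot)$, i.e.\ that it belongs to $H_{\mathop{\rm div}}(\Omega)$ — this is precisely where the $L^2$-regularity of $f$ in \eqref{hypglin} enters — together with the bookkeeping of constants so that exactly the factor $1+C_{\mathcal D}$ multiplies $W_{\mathcal D}(\bm{\psi})+S_{\mathcal D}(\bar u)$, a cruder splitting producing a worse power of $C_{\mathcal D}$.
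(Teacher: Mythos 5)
Your argument is correct and is exactly the standard proof of this estimate (the GDM analogue of the second Strang lemma) from the cited reference \cite[Theorem 2.28]{gdm}, which the paper invokes without reproducing: identify $\bm{\psi}=\Lambda\nabla\bar u+\bm{F}\in H_{\mathop{\rm div}}(\Omega)$ via $\mathop{\rm div}\bm{\psi}=-f$, derive the error equation controlled by $W_{\mathcal D}(\bm{\psi})$, compare with an arbitrary $w\in X_{\mathcal D,0}$ using the ellipticity bounds and the discrete Poincar\'e constant, and take the infimum over $w$ to recover $S_{\mathcal D}(\bar u)$. Nothing to add.
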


\begin{theorem}[Convergence for the nonlinear model {\cite[Theorem 2.9]{DE19}}]\label{th:gdm.cv.stefan}
Let $(\mathcal D_m)_{m\in\mathbb{N}}$ be a sequence of GDs which is consistent, limit-conforming and compact (which implies its coercivity \cite[Lemma 2.10]{gdm}), and such that each $\mathcal D_m$ has a piecewise constant reconstruction. 
Then, for any $m\in\mathbb{N}$ there exists a solution to \eqref{eq:stefan.gs} with $\mathcal D=\mathcal D_m$ and there exists a solution $\bar u$ to \eqref{eq:stefan.weak} such that, as $m\to\infty$, the following convergences hold:
\begin{align*}
&\Pi_{\mathcal D_m}u_m\to\bar u\mbox{ weakly in $L^2(\Omega)$,}\\
&\Pi_{\mathcal D_m}\zeta(u_m)\to\zeta(\bar u)\mbox{ strongly in $L^2(\Omega)$,}\\
&\nabla_{\mathcal D_m}\zeta(u_m)\to\nabla\zeta(\bar u)\mbox{ strongly in $L^2(\Omega)^d$}.
\end{align*}
\end{theorem}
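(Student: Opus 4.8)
The plan is to follow the standard Gradient Discretisation Method strategy for monotone degenerate problems: derive uniform a priori estimates, deduce existence of discrete solutions, extract convergent subsequences, identify the limit by Minty's monotonicity trick, and close with an energy argument for strong convergence of the gradients. The main obstacle will be the identification of the weak limit of $\Pi_{\mathcal D_m}\zeta(u_m)$ with $\zeta$ of the weak limit of $\Pi_{\mathcal D_m}u_m$, since the $u$-variable only converges weakly.

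First, for fixed $m$ (write $\mathcal D=\mathcal D_m$), I would test \eqref{eq:stefan.gs} with $v=\zeta(u)$ (meaningful because $\mathcal D$ has a piecewise constant reconstruction) and use the commutation $\Pi_{\mathcal D}\zeta(u)=\zeta(\Pi_{\mathcal D}u)$. The reaction term becomes $\int_\Omega(\Pi_{\mathcal D}u)\,\zeta(\Pi_{\mathcal D}u)$; since $\zeta$ is nondecreasing with $\zeta(0)=0$ and $|\zeta(s)|\ge C_1|s|-C_2$, one has the pointwise bound $s\,\zeta(s)\ge\tfrac{C_1}{2}s^2-C$, so this term controls $\tfrac{C_1}{2}\|\Pi_{\mathcal D}u\|_{L^2(\Omega)}^2-C|\Omega|$, while the diffusion term is at least $\underline\lambda\|\nabla_{\mathcal D}\zeta(u)\|_{L^2(\Omega)^d}^2$. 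Estimating the right-hand side by Cauchy--Schwarz, then applying the discrete Poincar\'e inequality (coercivity) and Young's inequality, yields bounds on $\|\nabla_{\mathcal D}\zeta(u)\|_{L^2(\Omega)^d}$ and $\|\Pi_{\mathcal D}u\|_{L^2(\Omega)}$ depending only on the data and on $C_{\mathcal D}$, hence uniform in $m$. Existence of a solution $u_m$ to \eqref{eq:stefan.gs} then follows from a standard topological degree (or Brouwer) argument on the continuous finite-dimensional map defining the scheme, the a priori estimate along the homotopy from $\zeta$ to the identity preventing the degree from vanishing.

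Next, from the uniform bound on $\|\nabla_{\mathcal D_m}\zeta(u_m)\|_{L^2(\Omega)^d}$ and limit-conformity, along a subsequence there is $\overline w\in H^1_0(\Omega)$ with $\nabla_{\mathcal D_m}\zeta(u_m)\rightharpoonup\nabla\overline w$ weakly in $L^2(\Omega)^d$, and compactness promotes $\Pi_{\mathcal D_m}\zeta(u_m)\to\overline w$ strongly in $L^2(\Omega)$; up to a further subsequence, $\Pi_{\mathcal D_m}u_m\rightharpoonup\bar u$ weakly in $L^2(\Omega)$ for some $\bar u\in L^2(\Omega)$. Given $\varphi\in H^1_0(\Omega)$, consistency provides $v_m\in X_{\mathcal D_m,0}$ with $\Pi_{\mathcal D_m}v_m\to\varphi$ and $\nabla_{\mathcal D_m}v_m\to\nabla\varphi$ strongly; passing to the limit in \eqref{eq:stefan.gs} with $v=v_m$, using weak-times-strong convergence in the reaction and diffusion terms, gives $\int_\Omega\bar u\varphi+\int_\Omega\Lambda\nabla\overline w\cdot\nabla\varphi=\int_\Omega f\varphi-\int_\Omega\bm F\cdot\nabla\varphi$ for all $\varphi\in H^1_0(\Omega)$.

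It remains to show $\overline w=\zeta(\bar u)$, the crux of the proof. Testing \eqref{eq:stefan.gs} with $v=\zeta(u_m)$ gives the discrete energy identity; its right-hand side converges to $\int_\Omega f\overline w-\int_\Omega\bm F\cdot\nabla\overline w$, which by the limit equation with $\varphi=\overline w$ equals $\int_\Omega\bar u\,\overline w+\int_\Omega\Lambda|\nabla\overline w|^2$. Since $\zeta(\Pi_{\mathcal D_m}u_m)=\Pi_{\mathcal D_m}\zeta(u_m)\to\overline w$ strongly and $\Pi_{\mathcal D_m}u_m\rightharpoonup\bar u$ weakly in $L^2(\Omega)$, the weak-strong product lemma gives $\int_\Omega\Pi_{\mathcal D_m}u_m\,\zeta(\Pi_{\mathcal D_m}u_m)\to\int_\Omega\bar u\,\overline w$, whence $\int_\Omega\Lambda|\nabla_{\mathcal D_m}\zeta(u_m)|^2\to\int_\Omega\Lambda|\nabla\overline w|^2$. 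Combining the $\limsup$ of this with the pointwise monotonicity inequality $\int_\Omega\big(\zeta(\Pi_{\mathcal D_m}u_m)-\zeta(g)\big)\big(\Pi_{\mathcal D_m}u_m-g\big)\ge0$, valid for any $g\in L^\infty(\Omega)$, and passing to the limit yields $\int_\Omega(\overline w-\zeta(g))(\bar u-g)\ge0$ for all such $g$; the standard Minty--Browder argument (varying $g$ around $\bar u$ via truncations, to accommodate the possibly super-linear growth of $\zeta$ and the resulting integrability bookkeeping) then gives $\overline w=\zeta(\bar u)$ a.e., so $\zeta(\bar u)\in H^1_0(\Omega)$ and $(\bar u,\zeta(\bar u))$ solves \eqref{eq:stefan.weak}. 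Finally, $\int_\Omega\Lambda|\nabla_{\mathcal D_m}\zeta(u_m)|^2\to\int_\Omega\Lambda|\nabla\zeta(\bar u)|^2$ together with the weak convergence $\nabla_{\mathcal D_m}\zeta(u_m)\rightharpoonup\nabla\zeta(\bar u)$ and the uniform ellipticity of $\Lambda$ upgrade to strong convergence $\nabla_{\mathcal D_m}\zeta(u_m)\to\nabla\zeta(\bar u)$ in $L^2(\Omega)^d$ (via $\int_\Omega\Lambda|\nabla_{\mathcal D_m}\zeta(u_m)-\nabla\zeta(\bar u)|^2\to0$); the strong $L^2$-convergence of $\Pi_{\mathcal D_m}\zeta(u_m)$ and the weak $L^2$-convergence of $\Pi_{\mathcal D_m}u_m$ are already established, and since $\zeta(\bar u)$ is uniquely determined by \eqref{eq:stefan.weak} a routine sub-subsequence argument passes all these convergences to the full sequence (using the partial uniqueness of \cite{DE19} for the $u$-variable).
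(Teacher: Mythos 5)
The paper does not prove this theorem itself; it is quoted verbatim from \cite[Theorem 2.9]{DE19}, and your argument reproduces essentially the strategy of that reference: a priori estimates from testing with $\zeta(u)$, existence by topological degree, compactness/limit-conformity to extract limits, Minty's trick exploiting the strong convergence of $\Pi_{\mathcal D_m}\zeta(u_m)$ against the weak convergence of $\Pi_{\mathcal D_m}u_m$ to identify $\overline w=\zeta(\bar u)$, and an energy argument for the strong gradient convergence. The outline is correct; the only point worth flagging (present already in the theorem's statement, and which you handle as well as can be done) is that the weak convergence $\Pi_{\mathcal D_m}u_m\rightharpoonup\bar u$ is a priori only guaranteed along subsequences, since only $\zeta(\bar u)$, not $\bar u$, is uniquely determined by \eqref{eq:stefan.weak}.
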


The following lemma is particularly useful when considering mass-lumping of a given gradient discretisation. 
It shows that, under a simple assumption comparing the original and mass-lumped reconstructions, the properties of gradient discretisations that ensure the convergence of the gradient scheme are preserved.

\begin{lemma}[Mass-lumping preserves approximation properties {\cite[Th. 7.50]{gdm}}]\label{GDM:ml.is.ok}
Let $(\mathcal D_m)_{m\in\mathbb{N}}$ be a sequence of gradient discretisations that is coercive,
consistent, limit-conforming and compact. For each $m\in\mathbb{N}$ let $\mathcal D_m^*=(X_{\mathcal D_m,0}, \Pi_{\mathcal D_m}^*,\nabla_{\mathcal D_m})$ be a gradient discretisation that differs from $\mathcal D_m$ only through its function reconstruction. Assume the existence of a sequence $(\omega_m)_{m\in\mathbb{N}}$ of positive numbers such that $\omega_m\to 0$ as $m\to\infty$ and, for all $m\in\mathbb{N}$,
\[
\norm{\Pi_{\mathcal D_m}v-\Pi_{\mathcal D_m}^*v}{L^2(\Omega)}\le \omega_m\norm{\nabla_{\mathcal D_m}v}{L^2(\Omega)^d}\quad \forall v\in X_{\mathcal D_m,0}.
\]
Then, the sequence $(\mathcal D_m^*)_{m\in\mathbb{N}}$ is also coercive, consistent, limit-conforming and compact.
\end{lemma}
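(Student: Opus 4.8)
The plan is to check, one at a time, that each of the four defining properties of a "good" sequence of gradient discretisations --- coercivity, consistency, limit-conformity, compactness --- passes from $(\mathcal D_m)_m$ to $(\mathcal D_m^*)_m$, using only the comparison hypothesis $\norm{\Pi_{\mathcal D_m}v-\Pi_{\mathcal D_m}^*v}{L^2(\Omega)}\le\omega_m\norm{\nabla_{\mathcal D_m}v}{L^2(\Omega)^d}$, the convergence $\omega_m\to 0$, and the fact that the gradient reconstruction $\nabla_{\mathcal D_m}$ is left unchanged (in particular $\norm{\nabla_{\mathcal D_m}\cdot}{L^2(\Omega)^d}$ is still a norm on $X_{\mathcal D_m,0}$, so $\mathcal D_m^*$ is indeed a gradient discretisation). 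For \emph{coercivity}, the triangle inequality gives $\norm{\Pi_{\mathcal D_m}^*v}{L^2(\Omega)}\le\norm{\Pi_{\mathcal D_m}v}{L^2(\Omega)}+\omega_m\norm{\nabla_{\mathcal D_m}v}{L^2(\Omega)^d}$; dividing by $\norm{\nabla_{\mathcal D_m}v}{L^2(\Omega)^d}$ and maximising over $v\in X_{\mathcal D_m,0}\setminus\{0\}$ yields $C_{\mathcal D_m^*}\le C_{\mathcal D_m}+\omega_m$, which stays bounded since $(C_{\mathcal D_m})_m$ is bounded and $\omega_m\to 0$.

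For \emph{consistency}, fix $\phi\in H^1_0(\Omega)$ and, for each $m$, pick a near-minimiser $v_m\in X_{\mathcal D_m,0}$ of the expression defining $S_{\mathcal D_m}(\phi)$ (say, within $2^{-m}$ of the infimum). Then $\norm{\nabla_{\mathcal D_m}v_m-\nabla\phi}{L^2(\Omega)^d}\le S_{\mathcal D_m}(\phi)+2^{-m}\to 0$, so $\norm{\nabla_{\mathcal D_m}v_m}{L^2(\Omega)^d}$ is bounded; using $v_m$ as a competitor for $S_{\mathcal D_m^*}(\phi)$ and the triangle inequality, $S_{\mathcal D_m^*}(\phi)\le\norm{\Pi_{\mathcal D_m}v_m-\phi}{L^2(\Omega)}+\omega_m\norm{\nabla_{\mathcal D_m}v_m}{L^2(\Omega)^d}+\norm{\nabla_{\mathcal D_m}v_m-\nabla\phi}{L^2(\Omega)^d}\to 0$. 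For \emph{limit-conformity}, fix $\bm{\psi}\in H_{\mathop{\rm div}}(\Omega)$ and, for $v\in X_{\mathcal D_m,0}\setminus\{0\}$, split $\int_\Omega(\Pi_{\mathcal D_m}^*v\mathop{\rm div}\bm{\psi}+\nabla_{\mathcal D_m}v\cdot\bm{\psi})=\int_\Omega(\Pi_{\mathcal D_m}v\mathop{\rm div}\bm{\psi}+\nabla_{\mathcal D_m}v\cdot\bm{\psi})+\int_\Omega(\Pi_{\mathcal D_m}^*v-\Pi_{\mathcal D_m}v)\mathop{\rm div}\bm{\psi}$; the Cauchy--Schwarz inequality and the hypothesis bound the last term by $\omega_m\norm{\nabla_{\mathcal D_m}v}{L^2(\Omega)^d}\norm{\mathop{\rm div}\bm{\psi}}{L^2(\Omega)}$, so dividing by $\norm{\nabla_{\mathcal D_m}v}{L^2(\Omega)^d}$ and maximising gives $W_{\mathcal D_m^*}(\bm{\psi})\le W_{\mathcal D_m}(\bm{\psi})+\omega_m\norm{\mathop{\rm div}\bm{\psi}}{L^2(\Omega)}\to 0$.

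Finally, for \emph{compactness}, take any $(v_m)_m$ with $v_m\in X_{\mathcal D_m,0}$ and $\sup_m\norm{\nabla_{\mathcal D_m}v_m}{L^2(\Omega)^d}<\infty$; then $\norm{\Pi_{\mathcal D_m}^*v_m-\Pi_{\mathcal D_m}v_m}{L^2(\Omega)}\le\omega_m\sup_m\norm{\nabla_{\mathcal D_m}v_m}{L^2(\Omega)^d}\to 0$, while $(\Pi_{\mathcal D_m}v_m)_m$ is relatively compact in $L^2(\Omega)$ because $(\mathcal D_m)_m$ is compact; since a sequence that differs from a relatively compact one by a sequence converging to $0$ in $L^2(\Omega)$ is itself relatively compact (given a subsequence, extract a further one along which $\Pi_{\mathcal D_m}v_m$ converges and add the vanishing remainder), $(\Pi_{\mathcal D_m}^*v_m)_m$ is relatively compact. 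There is no genuine obstacle here: every step is an application of the triangle inequality or the Cauchy--Schwarz inequality combined with $\omega_m\to 0$ and the unchanged gradient operator. The only point requiring a moment's care is consistency, where one must observe that a near-optimal interpolant for $S_{\mathcal D_m}(\phi)$ automatically has a bounded reconstructed gradient --- because that gradient is close to $\nabla\phi$ --- so that the extra term $\omega_m\norm{\nabla_{\mathcal D_m}v_m}{L^2(\Omega)^d}$ indeed vanishes.
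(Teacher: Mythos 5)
Your proof is correct and follows the same elementary perturbation argument as the cited reference ([gdm, Th.~7.50]), which the paper does not reproduce: each of the four properties is transferred by the triangle or Cauchy--Schwarz inequality combined with $\omega_m\to 0$ and the fact that the gradient reconstruction is unchanged. The one point needing care --- that a (near-)minimiser for $S_{\mathcal D_m}(\phi)$ has a bounded discrete gradient, so the extra term $\omega_m\norm{\nabla_{\mathcal D_m}v_m}{L^2(\Omega)^d}$ vanishes --- is handled properly.
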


\subsection{Non-conforming gradient discretisations}

We recall here that polytopal non-conforming methods, as defined in Section \ref{sec:nc.approx.lin}, are gradient discretisation methods for gradient discretisations that satisfy the  properties required for the error estimates/convergence of the scheme. 

Let $V_{\mathfrak T,0}$ be a finite-dimensional subspace of $H^1_{\mathfrak T,0}$, and define the gradient discretisation $\mathcal D$ by:
\begin{equation}\label{eq:GD.for.NC}
	X_{\mathcal D,0}=V_{\mathfrak T,0}\,,\quad \Pi_{\mathcal D}v=v\mbox{ and }\nabla_{\mathcal D}v=\nabla_{\mathcal M}v\quad\forall v\in X_{\mathcal D,0}.
\end{equation}
Then, the non-conforming scheme \eqref{eq:nc.lin}, for the linear model, based on $V_{\mathfrak T,0}$ is the gradient scheme \eqref{eq:gs.lin} based on $\mathcal D$. Likewise, if $\Pi_{\mathfrak T}:V_{\mathfrak T,0}\to L^\infty(\Omega)$ is a piecewise-constant reconstruction of the form \eqref{eq:def.ml.nc} and $\mathcal D^*=(V_{\mathfrak T,0},\Pi_{\mathfrak T},\nabla_{\mathcal M})$, then the non-conforming scheme \eqref{eq:stefan.nc} for the Stefan/PME model is the gradient scheme \eqref{eq:stefan.gs} with $\mathcal D^*$ instead of $\mathcal D$. 
\begin{proposition}[Estimates for non-conforming methods {\cite[Proposition 9.5]{gdm}}]\label{prop:est.NC.GDM}
Let $\mathfrak T$ be a polytopal mesh and assume that $\gamma_{\mathfrak T}\le\gamma$. Let $V_{\mathfrak T,0}$ be a finite-dimensional subspace of $H^1_{\mathfrak T,0}$ and define the GD $\mathcal D$ by \eqref{eq:GD.for.NC}. Then, there exists $C>0$ depending only on $\Omega$ and $\gamma$ such that
\begin{align}\label{eq:nc.bound.CD}
C_{\mathcal D}\le{}& C\,\\
\label{eq:nc.bound.SD}
S_{\mathcal D}(\phi)\le{}& C\min_{v\in V_{\mathfrak T,0}}\norm{v-\phi}{H^1_{\mathfrak T,0}}\quad\forall \phi\in H^1_0(\Omega)\,,\\
\label{eq:nc.bound.WD}
W_{\mathcal D}(\bm{\psi})\le{}& C h_{\mathcal M}\norm{\bm{\psi}}{H^1(\Omega)^d}\quad\forall\bm{\psi}\in H^1(\Omega)^d.
\end{align}
\end{proposition}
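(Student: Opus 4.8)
The plan is to establish the three estimates \eqref{eq:nc.bound.CD}, \eqref{eq:nc.bound.SD}, \eqref{eq:nc.bound.WD} of the gradient discretisation $\mathcal D$ defined by \eqref{eq:GD.for.NC}, in the order \eqref{eq:nc.bound.WD}, \eqref{eq:nc.bound.CD}, \eqref{eq:nc.bound.SD}, the last being an immediate corollary of the second. Two scaled local inequalities, valid on each half-diamond $D_{K,\sigma}$ with constants depending only on $\gamma_{\mathfrak T}$ (they follow from $D_{K,\sigma}$ being star-shaped with respect to $\mathbf x_K$ and from the control of $h_K/d_{K,\sigma}$ and ${\rm Card}(\mathcal F_K)$ entering $\gamma_{\mathfrak T}$; see the estimates of \cite[Appendix B]{gdm}), will be used repeatedly: a trace inequality $\|g\|_{L^2(\sigma)}^2\le C(h_K^{-1}\|g\|_{L^2(D_{K,\sigma})}^2+h_K\|\nabla g\|_{L^2(D_{K,\sigma})^d}^2)$ and a Poincaré--Wirtinger inequality $\|g-\overline g_{D_{K,\sigma}}\|_{L^2(D_{K,\sigma})}\le Ch_K\|\nabla g\|_{L^2(D_{K,\sigma})^d}$, where $\overline g_{D_{K,\sigma}}$ is the mean of $g$ on $D_{K,\sigma}$. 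The structural fact is that, by Definition \ref{chapncfe:def:ncspace}, any $w\in H^1_{\mathfrak T,0}$ has on each face $\sigma$ a single face-average $\overline w_\sigma=\frac1{|\sigma|}\int_\sigma w_{|K,\sigma}$, independent of $K\in\mathcal M_\sigma$ by \eqref{ncfe:eq:sameav} (and $\overline w_\sigma=0$ for $\sigma\in\mathcal F_{\rm ext}$ by \eqref{ncfe:eq:avnull}); since $\overline w_\sigma$ is then the best constant $L^2(\sigma)$-approximation of $w_{|K,\sigma}$, applying the trace inequality to $g=w_{|K}-\overline w_{D_{K,\sigma}}$ together with Poincaré--Wirtinger on $D_{K,\sigma}$ yields the key bound
\[
\|w_{|K,\sigma}-\overline w_\sigma\|_{L^2(\sigma)}\le Ch_K^{1/2}\|\nabla w\|_{L^2(D_{K,\sigma})^d}\qquad\forall w\in H^1_{\mathfrak T,0},\ K\in\mathcal M,\ \sigma\in\mathcal F_K.
\]

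To prove \eqref{eq:nc.bound.WD}, I would fix $\bm\psi\in H^1(\Omega)^d$ and $v\in V_{\mathfrak T,0}$, integrate by parts on each cell (legitimate since $v_{|K},\bm\psi\in H^1(K)$) and sum: the volume term $\int_\Omega(\Pi_{\mathcal D}v\,{\rm div}\,\bm\psi+\nabla_{\mathcal D}v\cdot\bm\psi)$ collapses into $\sum_{\sigma\in\mathcal F}\int_\sigma J_\sigma(v)\,\bm\psi\cdot\mathbf n_{K(\sigma),\sigma}$, where $K(\sigma)$ is a fixed cell adjacent to $\sigma$, $J_\sigma(v)=v_{|K,\sigma}-v_{|L,\sigma}$ on interior faces ($\mathcal M_\sigma=\{K,L\}$) and $J_\sigma(v)=v_{|K,\sigma}$ on boundary faces. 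In all cases $\int_\sigma J_\sigma(v)=0$, so I may write $J_\sigma(v)=(v_{|K,\sigma}-\overline v_\sigma)-(v_{|L,\sigma}-\overline v_\sigma)$ (second term absent on boundary faces) and, since each summand has zero $L^2(\sigma)$-average, subtract from $\bm\psi\cdot\mathbf n$ in the $K$-part (resp.\ $L$-part) the constant $\overline{\bm\psi}_{D_{K,\sigma}}\cdot\mathbf n$ (resp.\ $\overline{\bm\psi}_{D_{L,\sigma}}\cdot\mathbf n$) without changing the integral. Cauchy--Schwarz on $\sigma$, the key bound for the $v$-factor, and the trace and Poincaré--Wirtinger inequalities applied to $\bm\psi-\overline{\bm\psi}_{D_{K,\sigma}}$ for the $\bm\psi$-factor, give a contribution bounded by $Ch_K\|\nabla v\|_{L^2(D_{K,\sigma})^d}\|\nabla\bm\psi\|_{L^2(D_{K,\sigma})^{d\times d}}$ per half-diamond; since the half-diamonds form a partition of $\Omega$, a final Cauchy--Schwarz over the pairs $(K,\sigma)$ produces $\left|\int_\Omega(\Pi_{\mathcal D}v\,{\rm div}\,\bm\psi+\nabla_{\mathcal D}v\cdot\bm\psi)\right|\le Ch_{\mathcal M}\|\nabla_{\mathcal D}v\|_{L^2(\Omega)^d}\|\bm\psi\|_{H^1(\Omega)^d}$. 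Dividing by $\|\nabla_{\mathcal D}v\|_{L^2(\Omega)^d}$ and taking the maximum over $v\neq0$ is exactly \eqref{eq:nc.bound.WD}.

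For \eqref{eq:nc.bound.CD}, since $\Pi_{\mathcal D}v=v$ and $\nabla_{\mathcal D}v=\nabla_{\mathcal M}v$ for the GD \eqref{eq:GD.for.NC}, the claim is the discrete Poincaré inequality $\|w\|_{L^2(\Omega)}\le C\|\nabla_{\mathcal M}w\|_{L^2(\Omega)^d}$ for $w\in H^1_{\mathfrak T,0}$, with $C$ depending only on $\Omega$ and $\gamma_{\mathfrak T}$; this is proved in \cite{gdm}, and I recall that it can be obtained by duality against a solution $z\in H^1_0(\Omega)$ of $-{\rm div}\,\nabla z=\phi$ (for $\phi\in L^2(\Omega)$, $\|\phi\|_{L^2(\Omega)}=1$), reducing $\int_\Omega w\phi$ to $\int_\Omega\nabla_{\mathcal M}w\cdot\nabla z$ plus a face remainder of the very type handled in the previous paragraph, which is controlled by $\|\nabla_{\mathcal M}w\|_{L^2(\Omega)^d}$ uniformly in $\gamma_{\mathfrak T}$ (and, e.g., by $Ch_{\mathcal M}\|\nabla_{\mathcal M}w\|_{L^2(\Omega)^d}$ when $z\in H^2(\Omega)$). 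Then \eqref{eq:nc.bound.SD} follows at once: $H^1_0(\Omega)\subset H^1_{\mathfrak T,0}$ (its functions lie in $H^1(K)$ on each cell, have equal face traces, hence satisfy \eqref{ncfe:eq:sameav}, and have zero boundary traces, hence \eqref{ncfe:eq:avnull}) and on $H^1_0(\Omega)$ the broken gradient is the usual one, so for $\phi\in H^1_0(\Omega)$ and $v\in V_{\mathfrak T,0}$ one has $v-\phi\in H^1_{\mathfrak T,0}$, $\Pi_{\mathcal D}v-\phi=v-\phi$ and $\nabla_{\mathcal D}v-\nabla\phi=\nabla_{\mathcal M}(v-\phi)$; the discrete Poincaré inequality applied to $v-\phi$ bounds $\|\Pi_{\mathcal D}v-\phi\|_{L^2(\Omega)}$ by $C\|\nabla_{\mathcal M}(v-\phi)\|_{L^2(\Omega)^d}=C\|v-\phi\|_{H^1_{\mathfrak T,0}}$, whence $S_{\mathcal D}(\phi)\le(1+C)\|v-\phi\|_{H^1_{\mathfrak T,0}}$, and minimising over $v\in V_{\mathfrak T,0}$ gives \eqref{eq:nc.bound.SD}.

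The main obstacle is the discrete Poincaré inequality underpinning \eqref{eq:nc.bound.CD} and \eqref{eq:nc.bound.SD}. Unlike in the conforming case it is not a one-line consequence of a Sobolev embedding, because functions of $H^1_{\mathfrak T,0}$ are genuinely discontinuous and their jumps vanish only in average on faces; the argument must therefore exploit \eqref{ncfe:eq:sameav}--\eqref{ncfe:eq:avnull} quantitatively (through the key bound above, or through an equivalent estimate obtained by integrating along mesh lines after a zero-extension) while keeping every constant controlled by $\gamma_{\mathfrak T}$. By contrast, once the key bound on $\|w_{|K,\sigma}-\overline w_\sigma\|_{L^2(\sigma)}$ is available, \eqref{eq:nc.bound.WD} is a routine cell-by-cell integration by parts followed by two applications of Cauchy--Schwarz, and \eqref{eq:nc.bound.SD} is immediate.
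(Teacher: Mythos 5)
The paper itself gives no proof of this proposition --- it defers entirely to \cite[Proposition 9.5]{gdm} --- so your write-up has to be measured against the argument of that reference. Your treatment of \eqref{eq:nc.bound.WD} is correct and is essentially the standard one: cell-by-cell integration by parts, regrouping by faces, insertion of the single-valued face averages (legitimate by \eqref{ncfe:eq:sameav}--\eqref{ncfe:eq:avnull}), and scaled trace and Poincar\'e--Wirtinger inequalities on the half-diamonds, with constants controlled by $\gamma_{\mathfrak T}$ through $h_K/d_{K,\sigma}$ and ${\rm Card}(\mathcal F_K)$. Your reduction of \eqref{eq:nc.bound.SD} to \eqref{eq:nc.bound.CD} is also correct, and you rightly state the discrete Poincar\'e inequality on the whole of $H^1_{\mathfrak T,0}$ rather than merely on the finite-dimensional $V_{\mathfrak T,0}$ --- this is exactly what is needed to apply it to $v-\phi$ with $\phi\in H^1_0(\Omega)$. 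The one soft spot is your sketch of that Poincar\'e inequality: the duality argument against $-{\rm div}\,\nabla z=\phi$ requires the normal trace of $\nabla z$ to lie in $L^2$ of each face so that the zero-average jumps can be exploited, i.e.\ essentially $z\in H^2$ locally, and this fails on general non-convex polytopal domains $\Omega$. The standard repairs are either to extend $w$ by zero to a larger convex box (condition \eqref{ncfe:eq:avnull} guarantees the extension still satisfies the zero-average jump conditions, and elliptic $H^2$ regularity holds there), or to follow the route of \cite{gdm}, which controls the cell and face averages of $w$ by $\norm{\nabla_{\mathcal M}w}{L^2(\Omega)^d}$ and then invokes the discrete Sobolev inequalities of the polytopal toolbox. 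Since you explicitly cite \cite{gdm} for this step and only offer the duality computation as a recollection, this is a caveat about the sketch rather than a gap in the proof; the substantive parts of your argument are sound.
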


\begin{theorem}[Properties of polytopal non-conforming methods {\cite[Th. 9.6]{gdm}}]\label{th:prop.generic.nc}
Let $(\mathfrak T_m)_{m\in\mathbb{N}}$ be a sequence of polytopal meshes such that $h_{\mathcal M_m}\to 0$ as $m\to\infty$ and $(\gamma_{\mathfrak T_m})_{m\in\mathbb{N}}$ is bounded. For each $m\in\mathbb{N}$ let $V_{\mathfrak T_m,0}$ be a finite-dimensional subspace of $H^1_{\mathfrak T_m,0}$ and assume that
\[
\min_{v\in V_{\mathfrak T_m,0}}\norm{v-\phi}{H^1_{\mathfrak T_m,0}}\to 0\mbox{ as $m\to\infty$,}\quad
\forall\phi\in H^1_0(\Omega).
\]
Then, the sequence $(\mathcal D_m)_{m\in\mathbb{N}}$ defined from $(V_{\mathfrak T_m,0})_{m\in\mathbb{N}}$ as in \eqref{eq:GD.for.NC} is coercive, consistent, limit-conforming, and compact.
\end{theorem}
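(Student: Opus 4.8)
The plan is to verify the four defining properties of the sequence $(\mathcal D_m)_{m\in\mathbb{N}}$ one at a time, using Proposition~\ref{prop:est.NC.GDM} to dispatch coercivity, consistency and limit-conformity quickly, and then concentrating on compactness, which is the only genuinely nontrivial point.

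\emph{Coercivity and consistency.} Since $(\gamma_{\mathfrak T_m})_{m\in\mathbb{N}}$ is bounded, the constant $C$ in Proposition~\ref{prop:est.NC.GDM} can be taken independent of $m$. Then \eqref{eq:nc.bound.CD} gives $\sup_m C_{\mathcal D_m}<\infty$, i.e.\ coercivity. Likewise, for any $\phi\in H^1_0(\Omega)$ (which lies in $H^1_{\mathfrak T_m,0}$ for every $m$), estimate \eqref{eq:nc.bound.SD} yields $S_{\mathcal D_m}(\phi)\le C\min_{v\in V_{\mathfrak T_m,0}}\norm{v-\phi}{H^1_{\mathfrak T_m,0}}$, and the right-hand side tends to $0$ by the standing assumption on the spaces $V_{\mathfrak T_m,0}$; hence $S_{\mathcal D_m}(\phi)\to0$, which is consistency.

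\emph{Limit-conformity.} From \eqref{eq:nc.bound.WD} we get $W_{\mathcal D_m}(\bm\psi)\le C h_{\mathcal M_m}\norm{\bm\psi}{H^1(\Omega)^d}\to0$ for every $\bm\psi\in H^1(\Omega)^d$. To pass to a general $\bm\psi\in H_{\mathop{\rm div}}(\Omega)$, observe that the Cauchy--Schwarz inequality applied to each of the two terms in the definition of $W_{\mathcal D}$, together with the definition of $C_{\mathcal D}$, gives
\[
W_{\mathcal D_m}(\bm\psi)\le (1+C_{\mathcal D_m})\norm{\bm\psi}{H_{\mathop{\rm div}}(\Omega)},
\]
so the maps $W_{\mathcal D_m}$ are equi-Lipschitz on $H_{\mathop{\rm div}}(\Omega)$ thanks to coercivity. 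Given $\bm\psi\in H_{\mathop{\rm div}}(\Omega)$ and $\varepsilon>0$, choose $\bm\psi_\varepsilon\in C^\infty_c(\Omega)^d\subset H^1(\Omega)^d$ with $\norm{\bm\psi-\bm\psi_\varepsilon}{H_{\mathop{\rm div}}(\Omega)}\le\varepsilon$; then $\limsup_{m\to\infty}W_{\mathcal D_m}(\bm\psi)\le\limsup_{m\to\infty}W_{\mathcal D_m}(\bm\psi_\varepsilon)+(1+\sup_mC_{\mathcal D_m})\varepsilon=(1+\sup_mC_{\mathcal D_m})\varepsilon$, and letting $\varepsilon\to0$ gives $W_{\mathcal D_m}(\bm\psi)\to0$. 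This is limit-conformity.

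\emph{Compactness.} Let $(v_m)_{m\in\mathbb{N}}$ be such that $v_m\in V_{\mathfrak T_m,0}$ and $M:=\sup_m\norm{\nabla_{\mathcal M_m}v_m}{L^2(\Omega)^d}<\infty$; since $\Pi_{\mathcal D_m}v_m=v_m$ by \eqref{eq:GD.for.NC}, we must show that $(v_m)_{m\in\mathbb{N}}$ is relatively compact in $L^2(\Omega)$. Extend each $v_m$ by $0$ to $\widetilde v_m\in L^2(\mathbb{R}^d)$, supported in the fixed compact $\overline\Omega$; coercivity gives $\norm{v_m}{L^2(\Omega)}\le C_{\mathcal D_m}M\le CM$, so the family is bounded in $L^2$. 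By the Kolmogorov--Riesz--Fréchet criterion it then suffices to establish a discrete translation estimate of the form
\[
\norm{\widetilde v_m(\cdot+\bm\xi)-\widetilde v_m}{L^2(\mathbb{R}^d)}\le C\big(|\bm\xi|+|\bm\xi|^{1/2}h_{\mathcal M_m}^{1/2}\big)M\qquad\forall\bm\xi\in\mathbb{R}^d,
\]
with $C$ depending only on $d$ and on the bound on $\gamma_{\mathfrak T_m}$; since $(h_{\mathcal M_m})_m$ is bounded, the right-hand side then tends to $0$ as $\bm\xi\to0$ uniformly in $m$. This estimate is proved along the classical lines for non-conforming spaces: for fixed $\bm\xi$, express $\widetilde v_m(\mathbi{x}+\bm\xi)-\widetilde v_m(\mathbi{x})$ as the sum of the increments of $\nabla_{\mathcal M_m}v_m$ along $[\mathbi{x},\mathbi{x}+\bm\xi]$ inside each cell the segment meets, plus the jumps of $v_m$ across the faces it crosses; square, integrate in $\mathbi{x}$, and average over $\bm\xi$ in a small ball. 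The cell contributions are controlled exactly as in the conforming case. The jump contributions are controlled because $v_m\in H^1_{\mathfrak T_m,0}$: by \eqref{ncfe:eq:sameav} the face averages of $v_m$ from either side coincide (and by \eqref{ncfe:eq:avnull} they vanish on boundary faces), so the jump across $\sigma$ is a difference of two ``trace-minus-average'' terms, each bounded on the corresponding half-diamond $D_{K,\sigma}$ by $C h_K^{1/2}\norm{\nabla_{\mathcal M_m}v_m}{L^2(D_{K,\sigma})^d}$ via a trace--Poincaré inequality, the constant being uniform because $\gamma_{\mathfrak T_m}$ is bounded. Summing over the cells and faces crossed — boundedly many per unit length along the segment, again by the $\gamma_{\mathfrak T_m}$ bound — yields the displayed estimate, and hence relative compactness of $(v_m)_{m\in\mathbb{N}}$ in $L^2(\Omega)$.

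\emph{Main obstacle.} All the weight of the argument sits in the discrete translation estimate of the compactness step, and specifically in bounding the face jumps of $v_m$ by the broken gradient: this is exactly where the weak-continuity conditions defining $H^1_{\mathfrak T,0}$ and the mesh-regularity bound on $\gamma_{\mathfrak T_m}$ are essential. The complete argument is carried out in \cite[Section~9.1]{gdm}, the present statement being \cite[Theorem~9.6]{gdm}.
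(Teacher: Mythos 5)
The paper itself does not prove this theorem---it defers entirely to \cite[Th.~9.6]{gdm}---so your proposal has to be measured against the standard argument rather than against an in-paper proof. Your overall structure is the right one and matches that argument: coercivity, consistency and (for $H^1$ fields) limit-conformity are immediate from Proposition \ref{prop:est.NC.GDM} once the bound on $(\gamma_{\mathfrak T_m})_m$ makes the constant $C$ uniform in $m$; the extension of limit-conformity to all of $H_{\mathop{\rm div}}(\Omega)$ goes by equi-Lipschitz continuity of $W_{\mathcal D_m}$ (which uses coercivity) plus density; and compactness is obtained from a Kolmogorov translation estimate whose face-jump terms are controlled by the weak-continuity conditions \eqref{ncfe:eq:sameav}--\eqref{ncfe:eq:avnull} together with trace/Poincar\'e inequalities on half-diamonds, with constants governed by $\gamma_{\mathfrak T_m}$. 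The cited reference organises the same ingredients through the ``control by a polytopal toolbox'' abstraction, but the substance---uniform estimates plus a discrete translation estimate---is the same, so your route is essentially a direct reconstruction of the cited proof.

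One concrete slip: in the limit-conformity step you approximate $\bm{\psi}\in H_{\mathop{\rm div}}(\Omega)$ by elements of $C^\infty_c(\Omega)^d$. That space is \emph{not} dense in $H_{\mathop{\rm div}}(\Omega)$: the normal trace $\bm{\psi}\cdot\mathbi{n}$ is continuous for the $H_{\mathop{\rm div}}$ norm and vanishes on $C^\infty_c(\Omega)^d$, so the closure of this space is the proper subspace of fields with zero normal trace. You should instead approximate by $C^\infty(\overline\Omega)^d\subset H^1(\Omega)^d$, which is dense in $H_{\mathop{\rm div}}(\Omega)$ on a Lipschitz (in particular polytopal) domain; with that replacement your equi-Lipschitz/density argument closes correctly. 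Beyond this, the compactness step is only sketched (the translation estimate is asserted along classical lines rather than derived), but the sketch identifies all the essential mechanisms, the stated estimate is of the correct form, and the uniformity in $m$ needed for Kolmogorov's criterion follows, as you note, from the boundedness of $(h_{\mathcal M_m})_m$ and $(\gamma_{\mathfrak T_m})_m$.
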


\begin{remark}[Mass-lumped non-conforming method]
Combining this theorem with Lem\-ma \ref{GDM:ml.is.ok} shows that mass-lumped versions of polytopal non-conforming methods, such as the one presented in Section \ref{NCpoly:sec:ml}, usually also inherits the coercivity, consistency, limit-conformity and compactness properties. 
\end{remark}

\section{Perspectives}\label{sec:conclusion}

The LEPNC presented here is a low-order method. It is possible to extend this method into an arbitrary order approximation method. Let $k\ge 1$ be a sought approximation degree. For $K\in\mathcal M$, $\sigma\in\mathcal F_K$ and $q\in\mathbb{P}^{k-1}(\sigma)$, by the Riesz representation theorem in $L^2(\sigma)$ for the Lebesgue measure weighted by $\phi_{K,\sigma}$ (which is strictly positive on $\sigma$), there exists a unique $q_K\in\mathbb{P}^{k-1}(\sigma)$ such that
\begin{equation}\label{def:phiKsq}
\int_\sigma(\phi_{K,\sigma})_{|\sigma}q_Kr=\int_\sigma qr\,,\quad\forall r\in\mathbb{P}^{k-1}(\sigma).
\end{equation}
Set $\phi_{K,\sigma,q}=\phi_{K,\sigma}\hat{q}_K$, where $\hat{q}_K\in\mathbb{P}^{k-1}(K)$ is defined by $\hat{q}_K(\mathbi{x})=q_K(\pi_\sigma(\mathbi{x}))$ with $\pi_\sigma:\mathbb{R}^d\to H_\sigma$ the orthogonal projection on the hyperspace $H_\sigma$ spanned by $\sigma$. Then, the local $k$-degree LEPNC space is
\[
V_K^{\textsc{\tiny LEPNC}, k}:={\rm span}(\mathbb{P}^k(K)\cup\{\phi_{K,\sigma,q}\,:\,\sigma\in\mathcal F_K\,,\;q\in\mathbb{P}^{k-1}(\sigma)\}).
\]
For any set of moments of degree $\le k-1$ on $\sigma$, there exists $q\in\mathbb{P}^{k-1}(\sigma)$ that has the same moments and thus, in virtue of \eqref{def:phiKsq}, $\phi_{K,\sigma,q}$ also has these same moments on $\sigma$. Let $(\psi_{K,i})_{i=1,\ldots,n_k}$ be a basis of $\mathbb{P}^k(K)$. For each $i=1,\ldots,n_k$ we can find a linear combination $\sum_{\sigma\in\mathcal F_K}\phi_{K,\sigma,q_i}$ that has the same moments of degree $\le k-1$ as $\psi_{K,i}$ on each $\sigma\in\mathcal F_K$. The function $\psi_{K,i}-\sum_{\sigma\in\mathcal F_K}\phi_{K,\sigma,q_i}$ therefore has zero moments of degree $\le k-1$ on each face and, extended by 0 outside $K$, satisfies the $(k-1)$-degree patch test: its moments on each face coincide when viewed from each side of the faces.

When $\{K,L\}=\mathcal M_\sigma$, for a given $q\in\mathbb{P}^{k-1}(\sigma)$, by \eqref{def:phiKsq} the functions $\phi_{K,\sigma,q}$ and $\phi_{L,\sigma,q}$ have the same moments of degree $\le k-1$ on $\sigma$. Hence, in a similar way as in \eqref{NCpoly:def.phisigma}, we can glue $\phi_{K,\sigma,q}$ and $\phi_{L,\sigma,q}$ to obtain a global function that satisfies the $(k-1)$-degree patch test.

The family of these extended functions span a non-conforming space that has approximation properties of order $k$ (that is, \eqref{NCpoly:eq.approx.JD} holds with $\mathcal O(h_{\mathcal M}^{k+1})$ instead of $\mathcal O(h_{\mathcal M}^{2})$ in the right-hand side). The only caveat is the following: letting $(q_j)_{j=1,\ldots,\ell_k}$ be a basis of $\mathbb{P}^{k-1}(\sigma)$, the family $\{\psi_{K,i}\,:\,i=1,\ldots,n_K\}\cup \{\phi_{K,\sigma,q_j}\,:\,\sigma\in\mathcal F_K\,,\;j=1,\ldots,\ell_k\}$ spans the local space $V_K^{\textsc{\tiny LEPNC}, k}$; however, it is not clear if, in general, this family is linearly independent. Hence, describing a space of the local space (and, in consequence, the global space) requires to actually solve local linear problems, extracting a basis from a generating family.

\bibliographystyle{abbrv}
\bibliography{sema_simai_bib}

\end{document}